\DeclareSymbolFontAlphabet{\mathbb}{AMSb}
\DeclareSymbolFontAlphabet{\mathbbl}{bbold}
\newcommand{\SetSSYT}{\ensuremath\mathrm{SetSSYT}}
\newcommand{\SetSSMT}{\ensuremath\mathrm{SetSSMT}}
\numberwithin{equation}{section}
\theoremstyle{definition}
\newtheorem* {theorem*}{Theorem}
\newtheorem* {conjecture*}{Conjecture}
\newtheorem{theorem}{Theorem}[section]
\newtheorem{thmdef}[theorem]{Theorem-Definition}
\theoremstyle{definition}
\newtheorem* {remark*}{Remark}
\newtheorem* {example*}{Example}
\newtheorem{lemma}[theorem]{Lemma}
\theoremstyle{definition}
\newtheorem{definition}[theorem]{Definition}
\theoremstyle{definition}
\newtheorem{proposition}[theorem]{Proposition}
\newtheorem{corollary}[theorem]{Corollary}
\newtheorem{remark}[theorem]{Remark}
\theoremstyle{definition}
\newtheorem {example}[theorem]{Example}
\theoremstyle{definition}
\theoremstyle{definition}
\theoremstyle{definition}
\def\({\left(}
\def\){\right)}
\newcommand{\sP}{\mathscr{P}}
\newcommand{\CC}{\mathbb{C}}
\def\NN{\mathbb{N}}
\def\CC{\mathbb{C}}
\def\ZZ{\mathbb{Z}}
\def\spanning{\textnormal{-span}}
\newcommand{\cL}{\mathcal{L}}
\def\fk{\mathfrak}
\def\barr{\begin{array}}
\def\earr{\end{array}}
\def\ba{\begin{aligned}}
\def\ea{\end{aligned}}
\def\be{\begin{equation}}
\def\ee{\end{equation}}
\def\qquand{\qquad\text{and}\qquad}
\def\quand{\quad\text{and}\quad}
\def\ds{\displaystyle}
\def\PP{\mathbb{P}}
\def\MM{\mathbb{M}}
\def\fkS{\fk S}
\def\fkG{\fk G}
\def\ben{\begin{enumerate}}
\def\een{\end{enumerate}}
\def\fpf{{\textsf {FPF}}}
\newcommand{\wfpf}{\Theta}
\def\csp{c^\Sp }
\def\Asc{\mathrm{Asc}}
\def\SS{\mathrm{SD}}
\newcommand{\Fl}{\textsf{Fl}}
\newcommand{\Sp}{\mathsf{Sp}}
\def\GP{G\hspace{-0.2mm}P}
\def\YY{\mathrm{D}}
\def\lessF{\lessdot_F}
\newcommand{\myarc}[2]{ \ar @/^#1pc/ @{-} [#2] }
\def\arcstop{\endxy\ }
\def\arcstart{\ \xy<0cm,.2cm>\xymatrix@!@R=.1cm@C=.2cm }
\newcommand{\arcdiagram}[1]{\boxed{\ \arcstart{ \\ #1}\arcstop\ }}
\def\ellfpf{\ell_\fpf}
\def\t{\fk t}
\def\u{\fk u}
\def\t{\mathbf{t}}
\def\fku{\fk u}
\def\Sfpf{{\fk S}^{\Sp}}
\def\Gfpf{\mathfrak{G}^\Sp}
\def\GSp{\GP^{\Sp}}
\def\dearc{\textsf{dearc}}
\def\*{\mathbin{\hat\circ}}
\def\SLambda{\sP_{\textsf{strict}}}
\def\varpi{\partial^{(\beta)}}
\def\vartheta{\pi^{(\beta)}}
\def\shSp{\lambda^{\Sp}}
\def\DSp{D^{\Sp}}
\def\ISp{I^{\textsf{FPF}}}
\newcommand{\arc}[2]{ \ar @/^#1pc/ @{-} [#2] }
\def\arcstop{\endxy\ }
\def\arcstart{\ \xy<0cm,-.06cm>\xymatrix@R=.05cm@C=.35cm }
\newcommand{\arcstartc}[1]{\ \xy<0cm,-.15cm>\xymatrix@R=.1cm@C=#1cm}
\begin{document}
\title{On some properties of symplectic Grothendieck polynomials}
\author{
Eric Marberg\thanks{The first author was supported by Hong Kong RGC Grant ECS 26305218.} \\ HKUST \\ {\tt eric.marberg@gmail.com}
\and
Brendan Pawlowski \\ University of Southern California \\ {\tt br.pawlowski@gmail.com}
}

\date{}

\maketitle

\begin{abstract}
Grothendieck polynomials, introduced by Lascoux and Sch\"utzenberger,
are certain $K$-theory representatives for Schubert varieties. Symplectic Grothendieck polynomials, described more recently by Wyser and Yong,  represent the $K$-theory classes of orbit closures for the complex symplectic group acting on the complete flag variety. We prove a transition formula for symplectic Grothendieck polynomials and study their stable limits. We show that each of the $K$-theoretic Schur $P$-functions of Ikeda and Naruse arises from a limiting procedure applied to symplectic Grothendieck polynomials representing certain ``Grassmannian'' orbit closures.
\end{abstract}


\setcounter{tocdepth}{2}
\tableofcontents

\section{Introduction}



Let $n$ be a positive integer.
The $K$-theory ring of the variety $\Fl_n$ of complete flags in $\CC^n$ is isomorphic to a quotient of a polynomial ring \cite[\S2.3]{KM}.
Under this correspondence, the \emph{Grothendieck polynomials} $\fkG_w$ represent the classes of the structure sheaves of Schubert varieties. The results in this paper concern a family of \emph{symplectic Grothendieck polynomials} $\Gfpf_z$ which similarly represent the $K$-theory classes of the orbit closures of the complex symplectic group acting on $\Fl_n$.

The Grothendieck polynomials $\fkG_w$ lie in $\ZZ[\beta][x_1, x_2, \ldots]$, where $\beta, x_1, x_2, \ldots$ are commuting indeterminates,
and are indexed by elements $w$ of the group $S_{\infty}$ of permutations of the positive integers $\PP:=\{1,2,3,\dots\}$ with finite support. Lascoux and Sch\"utzenberger
first defined these polynomials in a slightly different form in \cite{Lascoux1990,LS1983}. Setting $\beta = 0$ 
transforms Grothendieck polynomials to \emph{Schubert polynomials}, which represent the Chow classes of Schubert varieties.

 Lenart \cite{Lenart}, extending work of Lascoux \cite{Lascoux2000}, proved a ``transition formula'' expressing any product $x_k \fkG_w$ as a finite linear combination of Grothendieck polynomials; the Bruhat order on $S_{\infty}$ controls which terms appear.
A nice corollary of Lenart's result is that the set of Grothendieck polynomials form a $\ZZ[\beta]$-basis
for the polynomial ring $\ZZ[\beta][x_1, x_2, \ldots]$ (see Corollary~\ref{basis-cor}).

For $w \in S_\infty$ and $m \geq 0$, let $1^m \times w \in S_{\infty}$ denote the permutation sending $i \mapsto i$ for $i \leq m$ and $m+i \mapsto m+w(i)$ for $i > m$. The \emph{stable Grothendieck polynomial} of $w \in S_\infty$ is then given by
\begin{equation} \label{eq:intro-Gw}
    G_w := \lim_{m \to \infty} \fkG_{1^m \times w} \in \ZZ[\beta][[x_1, x_2, \ldots]].
\end{equation}
Results of Fomin and Kirillov \cite{FK1994} show that this limit converges in the sense of formal power series to a well-defined symmetric function. 
Despite its name, $G_w$ is a power series rather than a polynomial.

Of particular interest are the stable Grothendieck polynomials $G_{\lambda} := G_{w_\lambda}$ where $w_{\lambda}$ is the \emph{Grassmannian} permutation associated to an integer partition $\lambda$   (see \S \ref{subsec:G-lambda}). The $G_{\lambda}$'s represent structure sheaves of Schubert varieties in a Grassmannian \cite{Buch2002} and are natural ``$K$-theoretic'' generalizations of Schur functions. One can deduce from the transition formula for $\fkG_w$ that $G_w$ is an $\NN[\beta]$-linear combination of $G_{\lambda}$'s, and the \emph{Hecke insertion algorithm} of \cite{BKSTY} leads to a combinatorial description of the coefficients in this expansion.

 The \emph{symplectic Grothendieck polynomials} $\Gfpf_z$ are a second family of polynomials in $\ZZ[\beta][x_1, x_2, \ldots]$, which now represent the $K$-theory classes of the orbit closures of the complex symplectic group acting on $\Fl_n$ for even $n$. They are indexed by elements $z$ of the set $\ISp_{\infty}$ of bijections $z : \PP \to \PP$ such that $z = z^{-1}$, $z(i) \neq i$ for all $i \in \PP$, and $z(2i-1) = 2i$ for all sufficiently large $i$.  
(We think of $\ISp_\infty$ as the set of fixed-point-free involutions of the positive integers with ``finite support.'')
 Wyser and Yong first considered these polynomials in \cite{WyserYong}, but their definition differs from ours by a minor change of variables. Setting $\beta = 0$ gives the \emph{fixed-point-free involution Schubert polynomials} studied in \cite{HMP1, HMP5, WyserYong}.

Our first main result, Theorem~\ref{sp-lenart-thm}, is an analogue of Lenart's transition formula for symplectic Grothendieck polynomials. This is somewhat more complicated than Lenart's identity, involving multiplication of $\Gfpf_z$ by two indeterminates $x_k$ and $x_{z(k)}$;
the corresponding proof is also more involved.
Nevertheless, there is a surprising formal similarity between the two transition equations.
 A variant of Bruhat order again plays a key role.

This paper is a sequel to \cite{MP}, where we showed that the
 natural analogue of the stable limit \eqref{eq:intro-Gw} for symplectic Grothendieck polynomials
defines a symmetric formal power series $\GSp_{z}$ for each $z \in \ISp_{\infty}$. 
Results of the first author \cite{Mar} show that $\GSp_z$ is a finite $\NN[\beta]$-linear combination of 
 Ikeda and Naruse's \emph{K-theoretic Schur $P$-functions} $\GP_{\lambda}$ \cite{IkedaNaruse}. 
 Here we prove an important related fact: each $\GP_{\lambda}$ occurs as $\GSp_{z_{\lambda}}$ where $z_{\lambda} \in \ISp_\infty$ is the \emph{FPF-Grassmannian} involution corresponding to $\lambda$. See Theorem~\ref{f-grass-thm} for the precise statement.

Every symmetric power series in $\ZZ[\beta][[x_1, x_2, \ldots]]$ can be written as a possibly infinite $\ZZ[\beta]$-linear combination of stable Grothendieck polynomials.
One application of the preceding paragraph
is a proof that 
each $K$-theoretic Schur $P$-function $\GP_\lambda$ is a finite sum of $G_\mu$'s with
coefficients in $\NN[\beta]$.
It is also possible to deduce this fact from the results in \cite{HKPWZZ,PylPat},
though the derivation is slightly roundabout;
see the remark after Corollary~\ref{cor:GP-into-G}.

A brief outline of the rest of this article is as follows.
Section~\ref{divdiff-sect} covers some background material on permutations, divided difference operators, 
and Grothendieck polynomials.
In Section~\ref{tr-sect} we review Lenart's transition formula for $\fkG_w$ and then prove its symplectic analogue.
Section~\ref{stable-sect}, finally, contains our results on symplectic stable Grothendieck polynomials.

\section{Preliminaries}\label{divdiff-sect}

This section includes a few preliminaries and sets up most of our notation.
We write $\NN = \{0,1,2,\dots\}$ and $\PP = \{1,2,3,\dots\}$ for the sets of nonnegative and positive integers,
and define $[n] := \{1,2,\dots,n\}$ for $n \in \NN$.
Throughout, 
the symbols $\beta$, $x_1$, $x_2$, $\dots$ denote commuting indeterminates.

\subsection{Permutations}

For $i \in \PP$, define $s_i = (i,i+1)$ to be the permutation of $\PP$ interchanging $i$ and $i+1$.
These simple transpositions generate the infinite Coxeter group 
$S_\infty := \langle s_i : i \in \PP\rangle$ of permutations of $\PP$ with finite support,
as well as the finite subgroups $S_n := \langle s_1,s_2,\dots,s_{n-1}\rangle$ for each $n \in \PP$.

The \emph{length} of $w \in S_\infty$
is $\ell(w) := | \{ (i,j) \in \PP\times \PP : i<j\text{ and }w(i)>w(j)\}|.$
This finite quantity is also the minimum number of factors in any expression for $w$ as a product of simple transpositions.

We represent elements of $S_\infty$ in one-line notation
by identifying a word $w_1w_2\cdots w_n$ that has $\{w_1,w_2,\dots,w_n\} =[n]$
with the permutation $w \in S_\infty$ that has $w(i) = w_i$ for $i \in [n]$ and $w(i) =i$ for all integers $i>n$.

\subsection{Divided difference operators}\label{dd-sect}

Let $\cL = \ZZ[\beta][x_1^{\pm1},x_2^{\pm1},\dots]$ denote the ring of Laurent polynomials
in the variables $x_1$, $x_2$, $\dots$ with coefficients in $\ZZ[\beta]$.
Given $i \in \PP$ and $f \in \cL$,
write $s_i f$ for the Laurent polynomial formed from $f$ by interchanging the variables $x_i$ and $x_{i+1}$.
This operation extends to a group action of $S_\infty$ on $\cL$.
For $i \in \PP$, 
the \emph{divided difference operators} $\partial_i $ and $\varpi_i$ are the maps $\cL \to \cL$ given by
\be \partial_i f = \tfrac{f -s_i f}{x_i-  x_{i+1}}\quand \varpi_i f = \partial_i( (1+\beta x_{i+1}) f) 
= -\beta f + (1+\beta x_i) \partial_i f.
\ee
Both operators preserve the subring of polynomials $\ZZ[\beta][x_1,x_2,\dots]\subset \cL$.

Some identities are useful for working with these maps.
All formulas involving $\varpi_i$
reduce to formulas involving $\partial_i$ on setting $\beta=0$.
Fix $i \in \PP$ 
and $f,g \in \cL$.
Then \be\label{leib-eq}
\varpi_i(fg) = s_i f \cdot (\varpi_i g + \beta g) + \varpi_i f \cdot g
\ee
and we have $\partial_i f=0$ and $\varpi_i f = -\beta f$ if and only if $s_if=f$,
in which case 
\be\label{iwc-eq}
\partial_i(fg) = f \cdot \partial_i g
\quand
\varpi_i(fg) = f \cdot \varpi_i g.\ee
Moreover, one has $\partial_i \partial_i = 0$ and $\varpi_i \varpi_i = -\beta\varpi_i$. 
Both families of operators
satisfy the usual braid relations for $S_\infty$, meaning that we have
\be
\varpi_i \varpi_j = \varpi_j \varpi_i
\quand \varpi_i \varpi_{i+1}\varpi_i = \varpi_{i+1}\varpi_i \varpi_{i+1}
\ee
for all $i,j \in \PP$ with $|i-j|>1$.
If $w \in S_\infty$ then we can therefore define
\[\partial_w := \partial_{i_1}\partial_{i_2}\cdots \partial_{i_l}\qquand \varpi_w := \varpi_{i_1}\varpi_{i_2}\cdots \varpi_{i_l}\]
where $w = s_{i_1}s_{i_2} \cdots s_{i_l}$ is any \emph{reduced expression},
i.e., a minimal length factorization of $w$ as a product of simple transpositions. 

 \subsection{Grothendieck polynomials}
 
The following definition of \emph{Grothendieck polynomials} originates in \cite{FK1994}.

\begin{thmdef}[Fomin and Kirillov \cite{FK1994}]
\label{groth-thmdef}
There exists a unique family $\{ \fkG_w \}_{w \in S_\infty}\subset \ZZ[\beta][x_1,x_2,\dots]$ with 
$\fkG_{n\cdots 321} = x_1^{n-1} x_2^{n-2} \cdots x_{n-1}^1$
for all $n \in \PP$
and such that 
$\varpi_i \fkG_w = \fkG_{w s_i}$ for $i \in \PP$ with $w(i) > w(i+1)$.
\end{thmdef}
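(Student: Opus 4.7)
My plan is to define $\fkG_w$ by starting from a polynomial representing the longest element $w_0^{(n)}$ of some $S_n \ni w$ and descending through $w$ via the operators $\varpi_i$. Fix such an $n$, let $w_0 := w_0^{(n)}$, and set $v := w^{-1} w_0$, so $\ell(w_0) = \ell(w) + \ell(v)$. For any reduced expression $v = s_{j_1}\cdots s_{j_k}$, the chain $w_0, w_0 s_{j_k}, w_0 s_{j_k} s_{j_{k-1}}, \ldots, w$ decreases in length by one at every step, so iterated use of the second defining condition forces
\[
\fkG_w = \varpi_v \fkG_{w_0} = \varpi_{j_1}\varpi_{j_2}\cdots\varpi_{j_k}\bigl(x_1^{n-1}x_2^{n-2}\cdots x_{n-1}\bigr),
\]
with the value of $\fkG_{w_0}$ supplied by the first condition. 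Uniqueness follows at once.

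For existence I take this formula as the definition of $\fkG_w$ and verify three properties. Independence of the reduced expression chosen follows from Matsumoto's theorem together with the braid relations for the operators $\varpi_i$ stated in the excerpt. The recursion $\varpi_i \fkG_w = \fkG_{ws_i}$ for $w(i)>w(i+1)$ is then immediate: one has $(ws_i)^{-1} w_0 = s_i v$ with $\ell(s_i v) = \ell(v) + 1$, so prepending $s_i$ to a reduced word for $v$ yields a reduced word for $s_i v$, giving $\fkG_{ws_i} = \varpi_i \varpi_v \fkG_{w_0} = \varpi_i \fkG_w$. Independence of $n$ reduces, via the length-additive factorization $w_0^{(n+1)} = w_0^{(n)}\cdot (s_n s_{n-1}\cdots s_1)$, to checking the single identity
\[
\varpi_n \varpi_{n-1}\cdots \varpi_1\bigl(x_1^n x_2^{n-1}\cdots x_n\bigr) = x_1^{n-1} x_2^{n-2}\cdots x_{n-1}.
\]

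This last identity is the main technical obstacle. My strategy is to prove by induction on $k$ that applying $\varpi_1, \varpi_2, \ldots, \varpi_k$ in that order sends $x_1^n x_2^{n-1}\cdots x_n$ to
\[
x_1^{n-1} x_2^{n-2}\cdots x_{k-1}^{n-k+1} x_k^{n-k} x_{k+1}^{n-k} x_{k+2}^{n-k-1}\cdots x_n.
\]
The inductive step rests on two observations. First, $\varpi_i(x_i) = \partial_i(x_i + \beta x_i x_{i+1}) = 1$, since $x_i x_{i+1}$ is $s_i$-symmetric and thus killed by $\partial_i$. Second, \eqref{iwc-eq} (together with the symmetric right-sided version obtainable from the Leibniz rule \eqref{leib-eq}) lets any factor symmetric in $x_k$ and $x_{k+1}$ be pulled outside $\varpi_k$. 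Writing the relevant middle factor at step $k$ as $x_k \cdot (x_k x_{k+1})^{n-k}$ and combining these facts yields $\varpi_k(x_k^{n-k+1} x_{k+1}^{n-k}) = x_k^{n-k} x_{k+1}^{n-k}$, which propagates the inductive hypothesis. At $k = n$ the residual subword $x_{n-1}\cdot x_n$ collapses to $x_{n-1}$, producing the staircase $x_1^{n-1}x_2^{n-2}\cdots x_{n-1}$ and closing the argument.
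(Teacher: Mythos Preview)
Your argument is correct. The paper itself does not supply a proof of this Theorem--Definition; it simply attributes the result to Fomin and Kirillov \cite{FK1994} and moves on. What you have written is the standard argument: define $\fkG_w := \varpi_{w^{-1}w_0^{(n)}}\fkG_{w_0^{(n)}}$, use the braid relations for $\varpi_i$ to get independence of the reduced word, and then check stability in $n$ via the staircase identity $\varpi_n\cdots\varpi_1(x_1^n\cdots x_n)=x_1^{n-1}\cdots x_{n-1}$. Your inductive verification of this last identity---factoring $x_k^{n-k+1}x_{k+1}^{n-k}=x_k\cdot(x_kx_{k+1})^{n-k}$ and using $\varpi_k(x_k)=1$ together with \eqref{iwc-eq}---is clean and correct; note that the ``right-sided'' version of \eqref{iwc-eq} you invoke is immediate from commutativity of multiplication, so there is no need to appeal separately to the Leibniz rule.
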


 Note that it follows that 
$\varpi_i \fkG_w =-\beta \fkG_w$ if $w(i) < w(i+1)$.

\begin{example}
The Grothendieck polynomials for $w \in S_3$ are 
\[
\ba
\fkG_{123} &= 1, \\
\fkG_{213} &= x_1, \\
\ea
\qquad\quad
\ba
\fkG_{132} & =x_1 + x_2 + \beta x_1x_2, \\
\fkG_{231} &= x_1x_2, 
\ea
\qquad\quad
\ba
\fkG_{312} &= x_1^2, \\
\fkG_{321} &= x_1^2 x_2.
\ea
\]
\end{example}

We typically suppress the parameter $\beta$ in our notation,
but for the moment write $\fkG_w^{(\beta)} = \fkG_w$ for $w \in S_\infty$.
The \emph{Schubert polynomial} $\fkS_w$ of a permutation $w \in S_\infty$ (see \cite[Chapter 2]{Manivel})
is then $\fkG_w^{(0)}$. The polynomials $\{\fkS_w\}_{w \in S_\infty}$ are a $\ZZ[\beta]$-basis for 
$\ZZ[\beta][x_1,x_2,\dots]$ \cite[Proposition 2.5.4]{Manivel} so the Grothendieck polynomials are linearly independent.

Some references use the term ``Grothendieck polynomial'' to refer to the polynomials $\fkG_w^{(-1)}$.
One loses no generality in setting $\beta=-1$ since one can show by downward induction on permutation length that
\be\label{beta-rel} (-\beta)^{\ell(w)}\fkG_w^{(\beta)} =  \fkG_w^{(-1)}(-\beta x_1, -\beta x_2,\dots).\ee
Thus, it is straightforward to translate formulas in $\fkG_w^{(-1)}$ to formulas 
in $\fkG_w^{(\beta)}$.

\subsection{Symplectic Grothendieck polynomials}

Let $\Theta : \PP \to \PP$ be the map 
sending $i \mapsto i - (-1)^i$,
so that $\Theta =(1,2)(3,4)(5,6)\cdots.$
Define 
$\ISp_\infty := \{ w^{-1}\Theta w : w \in S_\infty\}.$
The elements of $\ISp_\infty$ are the involutions of the positive integers
that have no fixed points and that agree with $\Theta$ at all sufficiently large values of $i$.
We represent elements of $\ISp_\infty$ in one-line notation by identifying
a word $z_1z_2\cdots z_n$, satisfying $\{z_1,z_2,\dots,z_n\} = [n]$ and
$z_i = j$ if and only if $z_j = i \neq j$,
with the involution $z \in \ISp_\infty$ that has $z(i) = z_i$ for $i\in [n]$ and $z(i) = \Theta(i)$ for $i > n$. 

The symplectic analogues of $\fkG_w$ introduced below
were first studied by Wyser and Yong in a slightly different form; see \cite[Theorems  3 and 4]{WyserYong}.
The characterization given here combines \cite[Theorem 3.10 and Proposition 3.11]{MP}.

\begin{thmdef}[\cite{MP,WyserYong}]
\label{sp-thm1}
There exists a unique family $\{ \Gfpf_z \}_{z \in \ISp_\infty}\subset\ZZ[\beta][x_1,x_2,\dots]$
with
$\Gfpf_{n\cdots 321} = \prod_{1 \leq i < j \leq n - i } (x_i+ x_j+\beta x_ix_j)$
for all $n \in 2\PP$ 
and such that 
$\varpi _i \Gfpf_z =  \Gfpf_{s_izs_i}$ for $i \in \PP$ with
$i+1\neq z(i) > z(i+1) \neq i $.
\end{thmdef}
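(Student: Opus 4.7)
The plan is to proceed by downward induction starting from the explicit base case at the ``top'' FPF involutions $z_0^{(n)} := n\cdots 321$ (for $n \in 2\PP$). First I would introduce an appropriate FPF-length statistic $\ellfpf(z)$ so that $z_0^{(n)}$ is the unique maximum among FPF involutions supported in $[n]$, and so that each prescribed conjugation $z \mapsto s_izs_i$ satisfying $i+1 \neq z(i) > z(i+1) \neq i$ strictly decreases $\ellfpf$ by one. The preliminary combinatorial fact to check is that every $z \in \ISp_\infty$ supported in $[n]$ can be reached from some $z_0^{(n)}$ by a finite chain of such covering moves, which reduces to a routine analysis of the conjugation Bruhat order on FPF involutions.

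With that in hand, uniqueness is immediate: the recursion $\varpi_i \Gfpf_z = \Gfpf_{s_izs_i}$ reads from higher to lower FPF-length, so any chain of covering moves from $z_0^{(n)}$ down to $z$ pins $\Gfpf_z$ down uniquely from the explicit formula for $\Gfpf_{z_0^{(n)}}$. For existence, the nontrivial content is well-definedness: the polynomial produced must not depend on the chain chosen. The braid and commutation identities for $\varpi_i$ recorded in Section~\ref{dd-sect} reduce this to a Matsumoto-type statement for FPF-involution words, asserting that any two covering chains from $z_0^{(n)}$ to $z$ can be related by a sequence of elementary moves that match the operator identities. I would establish this by translating covering chains into reduced words for permutations $w$ satisfying $z = w^{-1}\Theta w$ and invoking the Hecke-word framework for FPF involutions developed in \cite{HMP1} and its successors.

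The main obstacle will be the handling of the degenerate situations where the recursion hypothesis fails, namely when $z(i) = i+1$ (so $\{i, i+1\}$ is a two-cycle of $z$) or when $z(i) < z(i+1)$. In these cases $s_izs_i = z$ and the operator $\varpi_i$ is expected to act as $-\beta$ on $\Gfpf_z$, which must be verified by combining the identity \eqref{iwc-eq} with the observation that $\varpi_i f = -\beta f$ whenever $s_if = f$. A separate but straightforward consistency check is needed to confirm that the explicit product formula for $\Gfpf_{z_0^{(n)}}$ is compatible across different values of $n$; this can be done by direct application of $\varpi_i$ to the product expression. Since \cite{MP, WyserYong} already establish each of these ingredients, I would ultimately cite Theorem 3.10 and Proposition 3.11 of \cite{MP} for the bulk of the argument and simply marshal their results into the unified form stated above.
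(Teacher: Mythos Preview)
Your proposal is correct and aligns with the paper's treatment: the paper offers no independent proof of this Theorem-Definition but simply records it as a consequence of \cite[Theorem 3.10 and Proposition 3.11]{MP} (with origins in \cite{WyserYong}), and you likewise conclude by citing exactly those results. The sketch you give of the underlying argument---downward induction on $\ellfpf$, well-definedness via a Matsumoto-type statement for FPF-involution words, and compatibility across $n$---accurately reflects how the cited references proceed, so there is nothing to correct.
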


The elements of this family are the \emph{symplectic Grothendieck polynomials}
described in the introduction.
If $i \in \PP$ is such that $z(i) < z(i+1)$ or $i+1=z(i) > z(i+1)=i$
then $\varpi _i \Gfpf_z  = -\beta \Gfpf_z$ \cite[Proposition 3.11]{MP}.


\begin{example}\label{sp-ex1}
The polynomials for $z \in \ISp_4 := \{ w\Theta w^{-1} : w \in S_4\}$ are 
\[
\ba
\Gfpf_{2143} & = 1,\\
\Gfpf_{3412} &= x_1 + x_2 + \beta x_1x_2, \\
\Gfpf_{4321} &= x_1^2 + x_1 x_2 + x_1 x_3 + x_2 x_3 +  2\beta x_1 x_2 x_3 + \beta x_1^2 x_2 + \beta x_1^2 x_3 + \beta^2 x_1^2 x_2 x_3.
\ea
\]
The smallest example of $\Gfpf_z$ where $z$ is not $\Sp$-dominant
(see Theorem~\ref{dom-thm})
 is
\[
{\small\ba
\Gfpf_{351624} =\ 
& x_1^2 + 2 x_1 x_2 + x_2^2 + x_1 x_3 + x_2 x_3 + x_1 x_4 + x_2 x_4 + 2 \beta x_1^2 x_2
\\ & + 2 \beta x_1 x_2^2 + \beta x_1^2 x_3 + 3 \beta x_1 x_2 x_3 + \beta x_2^2 x_3 + \beta x_1^2 x_4+ 3 \beta x_1 x_2 x_4 
\\&  + \beta x_2^2 x_4 + \beta x_1 x_3 x_4 + \beta x_2 x_3 x_4 + \beta^2 x_1^2 x_2^2 + 2 \beta^2 x_1^2 x_2 x_3 + 2 \beta^2 x_1 x_2^2 x_3
\\&  + 2 \beta^2 x_1^2 x_2 x_4 + 2 \beta^2 x_1 x_2^2 x_4 + \beta^2 x_1^2 x_3 x_4 + 3 \beta^2 x_1 x_2 x_3 x_4 + \beta^2 x_2^2 x_3 x_4 
\\& + \beta^3 x_1^2 x_2^2 x_3 + \beta^3 x_1^2 x_2^2 x_4 + 2 \beta^3 x_1^2 x_2 x_3 x_4 + 2 \beta^3 x_1 x_2^2 x_3 x_4 + \beta^4 x_1^2 x_2^2 x_3 x_4.
\ea}
\]

\end{example}

Setting $\beta=0$ transforms $\Gfpf_z$ to the \emph{fixed-point-free involution Schubert polynomials} $\Sfpf_z$ studied in \cite{HMP1,HMP3,HMP5,WyserYong}.
Since the family $\{\Sfpf_z\}_{z \in \ISp_\infty}$ is linearly independent, 
$\{\Gfpf_z\}_{z \in \ISp_\infty}$ is also linearly independent.

We  need one other preliminary result concerning the polynomials $\Gfpf_z$.
    The \emph{symplectic Rothe diagram} of an involution $z \in \ISp_\infty$ is the set of pairs
    \begin{equation*}
        \DSp(z) := \{(i, z(j)) : \text{$(i,j) \in [n] \times [n]$ and $z(i) > z(j) < i < j$}\}.
    \end{equation*}
An element $z \in \ISp_\infty$ is \emph{$\Sp$-dominant} if
$ \DSp(z) = \{   (i+j,j) \in \PP \times [k] : 1 \leq i \leq \mu_j\}$
for a strict partition $\mu = (\mu_1 > \mu_2 > \dots > \mu_k > 0)$.
This condition holds, for example, when $z=n\cdots 321$ for any $n\in 2\PP$.
\begin{theorem}[{\cite[Theorem 3.8]{MP}}]
 \label{dom-thm}
If $z \in \ISp_\infty$ is $\Sp$-dominant then 
\[\Gfpf_z = \prod_{(i,j) \in \DSp(z)} (x_i+ x_j+\beta x_ix_j).\]
\end{theorem}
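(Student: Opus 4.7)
The plan is to prove the theorem by downward induction along a chain of $\Sp$-dominant involutions, starting from a reverse permutation and removing one corner at a time. Fix an even $n$ large enough that $z(i) = \Theta(i)$ for all $i > n$, and let $z_0 := n(n-1)\cdots 321 \in \ISp_\infty$. Theorem-Definition~\ref{sp-thm1} gives the normalization $\Gfpf_{z_0} = \prod_{1 \leq i < j \leq n-i}(x_i + x_j + \beta x_i x_j)$, and a direct computation of $\DSp(z_0)$ from the definition yields the cell set $\{(a,c) : 1 \leq c < a \leq n-c\}$; after the substitution $c = n+1-b$ this matches the index set of the product, establishing the base case $\Gfpf_{z_0} = \prod_{(a,c)\in \DSp(z_0)}(x_a + x_c + \beta x_a x_c)$. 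The corresponding strict partition is the staircase $\mu_0 = (n-2, n-4, \ldots, 2)$.

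For the inductive step, I would argue that every $\Sp$-dominant $z$ (with strict partition $\mu \subseteq \mu_0$) can be reached from $z_0$ by a chain
\[ z_0 = v_0,\ v_1,\ \ldots,\ v_N = z \]
where each transition has the form $v_{k+1} = s_{i_k} v_k s_{i_k}$ for an index $i_k$ satisfying the proper-descent condition $i_k + 1 \neq v_k(i_k) > v_k(i_k + 1) \neq i_k$, each intermediate $v_k$ is itself $\Sp$-dominant, and the strict partition of $v_{k+1}$ is obtained from that of $v_k$ by removing a single corner cell. Existence of such a chain reduces to the (purely combinatorial) fact that strict partitions contained in a staircase form a lattice under corner-removal: one identifies a removable corner $(\mu_j + j, j)$ of the current shape and reads off the unique $i_k$ from the one-line notation of $v_k$ such that conjugation by $s_{i_k}$ implements exactly that corner-removal. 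Granted the chain, Theorem-Definition~\ref{sp-thm1} supplies $\Gfpf_{v_{k+1}} = \varpi_{i_k} \Gfpf_{v_k}$, so iterating from the base case gives $\Gfpf_z = \varpi_{i_{N-1}}\cdots\varpi_{i_0} \Gfpf_{z_0}$.

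It therefore suffices to show that the product formula $P(v) := \prod_{(a,b)\in\DSp(v)}(x_a + x_b + \beta x_a x_b)$ satisfies the analogous recurrence $\varpi_{i_k} P(v_k) = P(v_{k+1})$ at each step; combining this with $\Gfpf_{z_0} = P(z_0)$ and iterating then yields $\Gfpf_z = P(z)$ as desired. To verify the recurrence, factor $P(v_k) = Q \cdot R$, where $R$ is the product of the (finitely many) factors of $P(v_k)$ involving $x_{i_k}$ or $x_{i_k+1}$ and $Q$ collects the rest. The staircase structure of an $\Sp$-dominant diagram ensures that only the cells adjacent to the corner being removed can contribute to $R$, so $R$ is a short explicit product; meanwhile $Q$ is $s_{i_k}$-symmetric, so by \eqref{iwc-eq} we have $\varpi_{i_k} P(v_k) = Q \cdot \varpi_{i_k} R$. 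Applying $\varpi_i f = -\beta f + (1 + \beta x_i)\partial_i f$ together with local identities such as $\varpi_i(x_i + x_j + \beta x_i x_j) = 1$ for $j > i+1$ then reduces $\varpi_{i_k} R$ to the corresponding factors appearing in $P(v_{k+1})$.

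The main obstacle is the combinatorial bookkeeping in the chain construction: one must simultaneously track how the one-line notation of $v_k$ near position $i_k$ is modified by conjugation, verify that this modification preserves $\Sp$-dominance, and check that the local change in $\DSp$ consists of precisely the expected corner removal. Once this correspondence is pinned down, the algebraic verification $\varpi_{i_k} R = R_{\text{new}}$ is a short computation that follows the same pattern at every step. The two remaining non-trivial ingredients — the combinatorial lemma that corner-removals on strict partitions are implementable via proper-descent conjugations, and the local $\varpi_i$ identity on the corner factors — are both bounded, finite-case verifications.
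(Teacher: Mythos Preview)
The paper does not prove Theorem~\ref{dom-thm}; it is quoted from \cite[Theorem 3.8]{MP} and used as a black-box input (specifically, in the base case of the proof of Theorem~\ref{sp-lenart-thm}). There is therefore no proof in the present paper to compare your proposal against.

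That said, your outline is a natural strategy and is likely close in spirit to how the result is actually established: downward induction from the reverse permutation, peeling off one corner of the strict-partition shape at each step via conjugation by a simple transposition, and checking that the product $P(v)$ transforms by the same rule under $\varpi_{i_k}$. The base case is correct (though the substitution ``$c=n+1-b$'' is spurious; the index sets match simply under the relabeling $(a,c)\leftrightarrow(j,i)$, and the factor $x_a\oplus x_c$ is symmetric in its two arguments).

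Two points in your sketch need more care than you indicate. First, you never specify which index $i_k$ implements the removal of a given corner $(\mu_j+j,\,j)$; it is not always the row index, nor always the column index, and choosing the wrong proper descent can lead to a non-$\Sp$-dominant $s_iv s_i$. Pinning this down requires an explicit description of the $\Sp$-dominant involution attached to a strict partition. Second, your assertion that ``only the cells adjacent to the corner being removed can contribute to $R$'' is not accurate: typically many cells of $\DSp(v_k)$ lie in rows or columns $i_k,\,i_k{+}1$. What is actually true (and what you need) is that all such factors \emph{except one} pair off into products symmetric in $x_{i_k},x_{i_k+1}$, for instance $(x_a\oplus x_{i_k})(x_a\oplus x_{i_k+1})$ or the single factor $x_{i_k}\oplus x_{i_k+1}$; then \eqref{iwc-eq} together with $\varpi_i(x_i\oplus x_j)=1$ for $j\notin\{i,i{+}1\}$ finishes the step. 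Neither gap is fatal, but the bookkeeping is more substantial than ``bounded, finite-case verifications'' suggests.
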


\section{Transition equations}\label{tr-sect}

Lenart \cite{Lenart} derives a formula expanding the product $x_k \fkG_v$ for $k \in \PP$ and $v \in S_\infty$ in terms of other Grothendieck polynomials.
In this section, we prove a similar identity for symplectic Grothendieck polynomials.

\subsection{Lenart's transition formula}

We recall Lenart's formula to motivate our new results.
Given $v \in S_\infty$ and $k \in \PP$, define $P_k(v)$ to be the set of 
all permutations in $S_\infty$ of the form
\[w = v (a_1,k)(a_2,k)\cdots(a_p,k)(k,b_1)(k,b_2)\cdots (k,b_q)\]
where $p,q \in \NN$ and
$a_p < \dots < a_2<a_1 < k < b_q < \dots < b_2 < b_1,$
and the length increases by exactly one 
upon multiplication by each transposition. 
Differing slightly from the convention in \cite{Lenart}, we allow the case $p=q=0$ so $w \in P_k(v)$.
Given $w \in P_k(v)$ define $\epsilon_k(w,v) = (-1)^{p}$.
This notation is well-defined since 
 $p$ can be recovered from $w \in P_k(v)$ as the number of indices $i<k$ with $v(i) \neq w(i)$.

\begin{theorem}[{\cite[Theorem 3.1]{Lenart}}]
\label{lenart-thm}
If $v \in S_\infty$ and $k \in \PP$ then
\[(1+\beta x_k) \fkG_v = \sum_{w \in P_k(v)} \epsilon_k(w,v) \beta^{\ell(w)-\ell(v)} \fkG_w.\]
\end{theorem}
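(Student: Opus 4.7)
The plan is to prove the identity by downward induction on $\ell(v)$, leveraging the recurrence $\varpi_i \fkG_v = \fkG_{vs_i}$ from Theorem-Definition~\ref{groth-thmdef} (which holds when $v(i) > v(i+1)$). Both sides of the claim are elements of $\ZZ[\beta][x_1,x_2,\dots]$.

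For the base case, I would verify the identity directly when $v = n\cdots 321$ is the longest element of some $S_n$ with $n$ sufficiently large (in particular, larger than $k$). In this setting, $\fkG_v = x_1^{n-1}x_2^{n-2}\cdots x_{n-1}$ is an explicit monomial, and the chains $(a_1,k)\cdots(a_p,k)(k,b_1)\cdots(k,b_q)$ defining $P_k(v)$ are severely restricted (in particular, no upward transpositions $(k,b_j)$ can lengthen $v$ past its maximum), so both sides can be computed and matched term by term. Since every $u \in S_n$ lies below $w_0 = n\cdots 321$ in the weak Bruhat order, iterating the inductive step below will reach every $v \in S_\infty$.

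For the inductive step, suppose the identity holds for $v$ and choose $i$ with $v(i) > v(i+1)$, so $\ell(vs_i) = \ell(v) - 1$. Apply $\varpi_i$ to both sides. On the left, the Leibniz rule \eqref{leib-eq} yields
\[
\varpi_i\bigl((1+\beta x_k)\fkG_v\bigr) = s_i(1+\beta x_k)\bigl(\varpi_i\fkG_v + \beta\fkG_v\bigr) + \varpi_i(1+\beta x_k)\cdot\fkG_v,
\]
which for $i \notin \{k-1,k\}$ collapses via \eqref{iwc-eq} to $(1+\beta x_k)\fkG_{vs_i}$. On the right, each summand $\fkG_w$ transforms under $\varpi_i$ to either $\fkG_{ws_i}$ or $-\beta\fkG_w$ depending on the descent status of $w$ at $i$, and one must reorganize the resulting expression to match $\sum_{w' \in P_k(vs_i)} \epsilon_k(w',vs_i)\beta^{\ell(w')-\ell(vs_i)}\fkG_{w'}$. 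Since Grothendieck polynomials are linearly independent, this rearrangement can be verified at the level of coefficients.

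The main obstacle is the combinatorial bookkeeping: one must construct a sign-preserving matching between the transformed chains in $P_k(v)$ and the chains in $P_k(vs_i)$, accounting simultaneously for the binary outcomes of $\varpi_i \fkG_w$ and for the length-exponent shifts $\beta^{\ell(w) - \ell(v)}$ versus $\beta^{\ell(w') - \ell(vs_i)}$. The most delicate case is $i \in \{k-1,k\}$, where $(1+\beta x_k)$ is not $s_i$-invariant and the additional Leibniz terms must be absorbed; verifying that these extra contributions precisely cancel against those coming from chains whose transpositions $(a_j,k)$ or $(k,b_j)$ interact nontrivially with $s_i$ is where the proof demands the most care.
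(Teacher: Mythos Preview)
The paper does not actually prove this theorem: it is cited from Lenart's paper, and the only argument given is the one-line observation that Lenart's $\beta=-1$ version is equivalent to the generic-$\beta$ statement via \eqref{beta-rel}. So your proposal is not competing against a proof in the paper; it is proposing to supply one where the paper simply quotes the literature.

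Your strategy---downward induction on $\ell(v)$ by applying $\varpi_i$---is sound in principle, and in fact it is exactly the template the paper uses to prove the symplectic analogue, Theorem~\ref{sp-lenart-thm}. What you have written, however, is a plan rather than a proof: the base case is asserted to be checkable but not checked, and the inductive step openly defers the ``combinatorial bookkeeping'' of matching chains in $P_k(v)$ with chains in $P_k(vs_i)$ under $\varpi_i$. That bookkeeping is the entire content of the argument. For calibration, the corresponding case analysis for the symplectic version occupies seven cases with multiple subcases each; the ordinary case is simpler but still requires a genuine case split (generic $i$; $i \in \{k-1,k\}$; whether $i$, $i+1$, $v(i)$, $v(i+1)$ lie among the $a$'s or $b$'s). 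Until those cases are written out and the cancellations verified, this is an outline, not a proof.
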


The cited theorem of Lenart applies to the case when $\beta=-1$,
but this is equivalent to the given identity for generic $\beta$ by \eqref{beta-rel}.

\begin{example}
Taking $v=13452 \in S_\infty$ and $k=3$ in Theorem~\ref{lenart-thm} gives
\[\ba 
(1 + \beta x_3) \fkG_{13452} &=
\fkG_{13452} +
 \beta\fkG_{13542} - \beta\fkG_{14352} -
\beta^2\fkG_{14532} + \beta^2\fkG_{34152} 
\\&\quad+
\beta^3\fkG_{34512} + \beta^3\fkG_{34251} +
\beta^4\fkG_{34521}.
\ea
\]
This reduces to \cite[Example 3.9]{Lenart} on setting $\beta=-1$.
\end{example}

Lenart's formula implies that $x_k \fkG_v$ is a finite $\ZZ[\beta]$-linear combination of $\fkG_w$'s.
By starting with $v=1$ so that $\fkG_v=1$, we deduce that any monomial 
in $\ZZ[\beta][x_1,x_2,\dots]$ is a finite linear combination of Grothendieck polynomials.
Since these functions are also linearly independent, the following holds:

\begin{corollary}\label{basis-cor}
The set $\{\fkG_w\}_{w\in S_\infty}$ is a $\ZZ[\beta]$-basis for $\ZZ[\beta][x_1,x_2,\dots]$.
\end{corollary}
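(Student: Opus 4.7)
The plan is to derive spanning from Lenart's transition formula and then combine with the linear independence of $\{\fkG_w\}_{w \in S_\infty}$ already noted in the paper.

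First I would isolate $x_k \fkG_v$ from Theorem~\ref{lenart-thm}. The key observation is that $v$ itself belongs to $P_k(v)$ (via the trivial case $p = q = 0$), and this term contributes $\epsilon_k(v,v)\beta^0 \fkG_v = \fkG_v$ on the right-hand side of Lenart's identity. Since the left-hand side expands as $\fkG_v + \beta x_k \fkG_v$, the $\fkG_v$ terms cancel and we obtain
\[
\beta x_k \fkG_v \;=\; \sum_{w \in P_k(v),\, w \neq v} \epsilon_k(w,v)\, \beta^{\ell(w)-\ell(v)}\, \fkG_w.
\]
Every $w \in P_k(v)$ with $w \neq v$ satisfies $\ell(w) \geq \ell(v) + 1$ since each transposition in the defining product strictly increases length. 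Hence every $\beta$-exponent on the right is at least $1$, and we may cancel a factor of $\beta$ to express $x_k \fkG_v$ as a finite $\ZZ[\beta]$-linear combination of Grothendieck polynomials.

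Next I would iterate this. Starting from $\fkG_{\id} = 1$, a straightforward induction on the degree of a monomial shows that any monomial $x_{k_1} x_{k_2} \cdots x_{k_m}$, and therefore any element of $\ZZ[\beta][x_1,x_2,\dots]$, lies in the $\ZZ[\beta]$-span of $\{\fkG_w\}_{w \in S_\infty}$.

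Finally I would invoke the linear independence noted immediately after Theorem-Definition~\ref{groth-thmdef}: since $\{\fkS_w\}_{w \in S_\infty}$ is a $\ZZ[\beta]$-basis of $\ZZ[\beta][x_1,x_2,\dots]$ and each $\fkS_w$ is the lowest-degree homogeneous component of $\fkG_w$, any purported $\ZZ[\beta]$-linear dependence among the $\fkG_w$'s forces a dependence among Schubert polynomials indexed by permutations of minimal length in the support, contradiction. Combined with the spanning step, this proves that $\{\fkG_w\}_{w \in S_\infty}$ is a $\ZZ[\beta]$-basis. There is no real obstacle here; the only subtle point is ensuring the division by $\beta$ is legal, which is exactly what the length inequality $\ell(w) > \ell(v)$ for $w \neq v$ in $P_k(v)$ guarantees.
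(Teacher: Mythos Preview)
Your proof is correct and follows essentially the same route as the paper: deduce from Theorem~\ref{lenart-thm} that $x_k\fkG_v$ lies in the $\ZZ[\beta]$-span of Grothendieck polynomials, iterate from $\fkG_1=1$ to get all monomials, and combine with the linear independence already recorded after Theorem-Definition~\ref{groth-thmdef}. You simply make explicit the cancellation of $\fkG_v$ and the legitimacy of dividing by $\beta$, which the paper leaves implicit.
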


\begin{remark}
This corollary is nontrivial since $\fkG_w$ is an inhomogeneous polynomial of the form 
$\fkS_w + (\text{ terms of degree greater than $\ell(w)$ })$.
Since $\{ \fkS_w\}_{w \in S_\infty}$ is a $\ZZ$-basis for $\ZZ[x_1,x_2,\dots]$,
it follows that any polynomial in $\ZZ[\beta][x_1,x_2,\dots]$ can be inductively expanded in terms of Grothendieck polynomials.
However, it is not clear \emph{a priori} that such an expansion will terminate in a finite sum.
\end{remark}

For $v,w \in S_\infty$, write $v \lessdot w$ if $\ell(w) = \ell(v)+1$ and $v^{-1}w  = (i,j)$ is a transposition for some positive integers $i<j$.
It is well-known that if $w \in S_\infty$ and $i,j \in \PP$ are such that $i<j$,
then $w \lessdot w(i,j)$ if and only if $w(i) < w(j)$ and no integer $e$ has $i<e<j$ and $w(i) < w(e) < w(j)$.

For distinct integers $i,j \in \PP$, let $\t_{ij}$ be the linear operator, acting on the right,
with $\fkG_w \t_{ij} = \fkG_{w(i,j)}$ for $w \in S_\infty$.
We can restate Theorem~\ref{lenart-thm} as the following identity:

\begin{theorem}
\label{lenart-thm2}
Fix $v \in S_\infty$ and $k \in \PP$.
Suppose 
\[1 \leq j_1  < j_2<\dots < j_p < k
 < l_q  < \dots < l_2 < l_1\]
are the integers such that $v \lessdot v(j,k)$ and $v\lessdot v(k,l)$.
Then
\[ (1 + \beta x_k) \Bigl[ \fkG_v\cdot  (1 + \beta \t_{j_1k}) \cdots (1 + \beta \t_{j_pk})\Bigr] = \fkG_v\cdot (1 + \beta \t_{kl_1}) \cdots (1+ \beta \t_{kl_q}).
\]
\end{theorem}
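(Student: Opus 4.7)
My plan is to verify Theorem~\ref{lenart-thm2} by expanding both sides in the basis $\{\fkG_w\}_{w \in S_\infty}$ (Corollary~\ref{basis-cor}) and matching coefficients, relying on Lenart's Theorem~\ref{lenart-thm} as the main tool.

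First I would prove the following combinatorial lemma: a decreasing sequence $b_1 > b_2 > \cdots > b_s > k$ of positive integers satisfies the Bruhat chain condition
\[
v \lessdot v(k,b_1) \lessdot v(k,b_1)(k,b_2) \lessdot \cdots \lessdot v(k,b_1)\cdots(k,b_s)
\]
if and only if each $b_i$ lies in $\{l_1,\dots,l_q\}$ (equivalently, $v \lessdot v(k, b_i)$ individually). The proof is by induction on $s$: since the $b_i$ are visited in decreasing order, the value at position $b_i$ in every intermediate permutation equals $v(b_i)$, which allows the step-$i$ Bruhat-cover condition to be compared directly with $v \lessdot v(k,b_i)$ after combining it with the no-bump conditions that were inductively verified at earlier steps. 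With this lemma in hand, the RHS expands as
\[
\fkG_v \prod_{i=1}^q(1+\beta \t_{k,l_i}) = \sum_{(b)} \beta^{|(b)|} \fkG_{v(k,b_1)\cdots(k,b_s)},
\]
summed over valid Lenart $(b)$-sequences at $v$; by Lenart's theorem this coincides with the $p=0$ portion of the expansion of $(1+\beta x_k)\fkG_v$.

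Given this, the theorem reduces to the cancellation identity
\[
\sum_{\emptyset\neq S\subseteq[p]} \beta^{|S|}(1+\beta x_k)\fkG_{u_S} \;+\; \sum_{\substack{(a),(b)\text{ valid}\\(a)\neq\emptyset}} (-\beta)^{|(a)|}\beta^{|(b)|}\fkG_{v\tau_a\tau_b} \;=\; 0,
\]
where $u_S = v\cdot\prod_{s\in S,\,\text{incr}}(j_s,k)$. The core of the argument is a sign-reversing involution: for each valid Lenart sequence $(a_1,\dots,a_p,b_1,\dots,b_q)$ at $v$ with $a_1=j_i$, the shortened sequence $(a_2,\dots,a_p,b_1,\dots,b_q)$ is a valid Lenart sequence at $u_{\{i\}}=v(j_i,k)$, and the respective contributions $(-\beta)^p\beta^q \fkG_{v\tau_a\tau_b}$ (from $S=\emptyset$) and $\beta\cdot(-\beta)^{p-1}\beta^q\fkG_{u_{\{i\}}\tau_{a'}\tau_b}=-(-\beta)^p\beta^q\fkG_{v\tau_a\tau_b}$ (from $S=\{i\}$) cancel pairwise.

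The principal obstacle is extending this cancellation to triples $(S,(a'),(b'))$ with $|S|\geq 2$: the cycle $\sigma_S=\prod_{s\in S,\,\text{incr}}(j_s,k)$ is the \emph{inverse} of the cycle appearing in Lenart's $(a)$-product (which is ordered decreasingly), so moving elements between $S$ and $(a')$ generally fails to preserve the permutation $u_S\tau_{a'}\tau_{b'}$. I plan to resolve this by induction on $p$: assuming the theorem for permutations with fewer valid $j$-transitions, one factors $\prod_{i=1}^p(1+\beta\t_{j_i,k}) = \prod_{i=1}^{p-1}(1+\beta\t_{j_i,k})\cdot(1+\beta\t_{j_p,k})$ and applies the inductive hypothesis to $v$ together with Lenart's formula for $(1+\beta x_k)\fkG_{v(j_p,k)}$. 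The delicate combinatorial heart of the induction will be tracking how the valid $j$'s and $l$'s of $v(j_p,k)$ relate to those of $v$: they differ in a controlled way governed by the position of $j_p$ relative to the other $j_i$'s and $l_i$'s, and one must verify that the resulting expressions telescope correctly.
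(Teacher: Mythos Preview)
Your overall strategy---expand both sides via Theorem~\ref{lenart-thm} and cancel by a sign-reversing involution---is exactly the paper's approach. The obstacle you identify for $|S|\geq 2$ is illusory, however, and the induction on $p$ is unnecessary. For any contributing pair $(S,(a))$ with $S=\{s_1<\cdots<s_m\}$, write the concatenated ``below-$k$'' word as
\[
(i_1,\dots,i_n):=(j_{s_1},\dots,j_{s_m},a_1,\dots,a_{n-m});
\]
this is a \emph{unimodal} sequence with a unique peak $i_M$. One checks directly that (i) $\ell(u_S)=\ell(v)+|S|$ for every subset $S\subseteq\{1,\dots,p\}$, and (ii) if the peak is $a_1$ (i.e.\ $m\geq 1$ and $a_1>j_{s_m}$, or $m=0$) then necessarily $v\lessdot v(a_1,k)$, so $a_1\in\{j_1,\dots,j_p\}$. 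It follows that for every such word with $n>0$ there are exactly two valid split points, namely $m=M$ and $m=M-1$, according to whether the peak $i_M$ lies in $S$ or is the first $a$. These two splits yield the \emph{same} permutation $w$---since only the boundary element $i_M$ moves across the split, the product $(i_1,k)\cdots(i_n,k)$ is literally unchanged---and opposite signs $\epsilon_k(w,u_S)=(-1)^{n-m}$, so they cancel. This is precisely the paper's argument. Your worry that ``moving elements between $S$ and $(a')$ fails to preserve the permutation'' is therefore beside the point: you never reshuffle the word, only re-cut it at an adjacent position.
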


\begin{proof}
After setting $\beta=-1$, 
this is a slight generalization of \cite[Corollary 3.10]{Lenart} (which is the main result of \cite{Lascoux2000}),
and has nearly the same proof.
Let $J = \{ j_1,j_2,\dots,j_p\}$ and $L = \{l_1,l_2,\dots,l_q\}$.
For subsets $E = \{ e_1 <  e_2<\dots <e_m \} \subset J$
and $F = \{ f_n < \dots  < f_2< f_1\} \subset L$
define $t_{E,k},t_{k,F}\in S_\infty$ by
\[
t_{A,k} = (e_1,k)(e_2,k)\cdots (e_m,k) 
\quand
 t_{k,B} = (k,f_1)(k,f_2)\cdots (k,f_n).
\]
One has $\ell(vt_{E,k}) = \ell(v) +|E|$ and $\ell(vt_{k,F}) = \ell(v) + |F|$
for all choices of $E \subset J$ and $F \subset L$.
Hence, by Theorem~\ref{lenart-thm}, we must show that 
\be\label{must-eq}
\sum_{E\subset J} \sum_{w \in P_k(vt_{E,k})} \epsilon_k(w,vt_{E,k}) \beta^{\ell(w)-\ell(v)} \fkG_w
=
\sum_{F \subset L}  \beta^{|F|} \fkG_{vt_{k,F}}.
\ee
Each permutation $w$ indexing the sum on the left can be written as
\[
w = v (i_1,k)(i_2,k)\cdots (i_m,k) (i_{m+1},k) \cdots (i_n,k)(k,i_{n+1})\cdots (k,i_r)
\]
for some indices with $i_1 < i_ 2 < \dots< i_m > i_{m+1} >\dots > i_n$ and 
$i_{n+1} > \dots > i_r > k$
and $\{i_1,i_2,\dots,i_m\} \subset J$.
Here, the set indexing the outer sum
on the left side of \eqref{must-eq} is $E = \{ i_1,i_2,\dots , i_m\}$.
If $n>0$ then each such $w$ appears twice 
with opposite associated signs $\epsilon_k(w,vt_{E,k})$; the two appearances correspond to 
$E = \{i_1,\dots,i_m\}$ and $E=\{i_1,\dots,i_{m-1}\}$.
The permutations $w$ that arise with $n=0$, alternatively, are exactly
the elements $vt_{k,F}$ for $F \subset L$, so \eqref{must-eq} holds.
\end{proof}

\subsection{Fixed-point-free Bruhat order}

For each involution $z \in \ISp_\infty$, let
\be
\label{ellfpf-eq}
 \ellfpf(z) =
| \{ (i,j) \in \PP\times \PP :z(i) > z(j) < i < j\}|.
\ee
One can check that if $z \in \ISp_\infty$ and $i \in \PP$ then
\be\label{ellfpf-eq2}
\ellfpf(s_izs_i)
=
\begin{cases} 
\ellfpf(z) + 1&\text{if }z(i) < z(i+1) \\
\ellfpf(z) & \text{if }i+1 = z(i) > z(i+1)=i \\
\ellfpf(z)-1 &\text{if }i+1 \neq z(i) > z(i+1) \neq i.
\end{cases}
\ee
It follows by induction that 
$
\ellfpf(z) = \min \{ \ell(w) : w \in S_\infty\text{ and } w^{-1} \Theta w  =z\}.
$

For $y,z \in \ISp_\infty$, we write 
$y \lessF z$
if $\ellfpf(z) = \ellfpf(y)+1$ and $z= tyt$ for a transposition $t \in S_\infty$.
The transitive closure of this relation is the \emph{Bruhat order} on $\ISp_\infty$ from \cite[\S4.1]{HMP5}.
One can give 
a more explicit characterization of $\lessF$:
 
 \begin{proposition}[{\cite[Proposition 4.9]{HMP3}}]
 \label{lessF-prop}
Suppose $y \in \ISp_\infty$, $i,j \in \PP$, and $i<j$.
\ben
\item[(a)] If $y(i) < i$ then $y \lessF (i,j) y(i,j)$ if and only if these properties hold:
\begin{itemize}
\item Either $y(i) < i < j < y(j)$ or $y(i) < y(j) < i < j$.
\item No integer $e$ has $ i<e<j $ and $y(i) < y(e) < y(j)$.
\end{itemize}

\item[(b)] If $j < y(j)$ then $y \lessF (i,j) y(i,j)$ if and only if these properties hold:
\begin{itemize}
\item Either $y(i) < i < j < y(j)$ or $ i < j < y(i) < y(j)$.
\item No integer $e$ has $ i<e<j $ and $y(i) < y(e) < y(j)$.
\end{itemize}
\een
\end{proposition}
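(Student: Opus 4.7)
My approach would be a direct computation of $\ellfpf(z)-\ellfpf(y)$ where $z:=(i,j)y(i,j)$, isolating exactly when this difference equals $+1$. First I would reduce to the nontrivial case: since $y$ is fixed-point-free, $y(i)\neq i$ and $y(j)\neq j$, and if $y(i)=j$ then $z=y$ and neither side of the equivalence holds, so we may assume $\{y(i),y(j)\}\cap\{i,j\}=\emptyset$. Under this assumption the four numbers $i,j,y(i),y(j)$ are distinct, and under the standing hypothesis of part (a), namely $y(i)<i$, there are exactly three possible relative orderings of these four numbers on the number line:
\[
y(i)<y(j)<i<j,\qquad y(i)<i<y(j)<j,\qquad y(i)<i<j<y(j),
\]
and likewise three configurations under the hypothesis of part (b).

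Next I would partition the pairs $(a,b)$ counted by \eqref{ellfpf-eq} into three classes: those with $\{a,b\}\cap\{i,j\}=\emptyset$ and $\{y(a),y(b)\}\cap\{i,j\}=\emptyset$ (which contribute identically to $\ellfpf(y)$ and $\ellfpf(z)$ and can be discarded); those supported entirely on $\{i,j,y(i),y(j)\}$ (a finite ``base'' contribution that depends only on the configuration); and those involving exactly one ``intermediate'' index $e$ with $i<e<j$, either through $e$ itself or through $y(e)\in\{i,j\}$. Using the involution symmetry $y=y^{-1}$, the third class contributes in symmetric pairs, and a careful tally shows that the net change coming from intermediate indices vanishes precisely when no $e\in(i,j)$ satisfies $y(i)<y(e)<y(j)$. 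Alternatively, one could sidestep the global computation by factoring $(i,j)=s_is_{i+1}\cdots s_{j-1}\cdots s_{i+1}s_i$ and inductively tracking $\ellfpf$ via \eqref{ellfpf-eq2}, but the bookkeeping is similar.

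Finally I would compare the base contributions across the six configurations and observe that exactly two of them in each part yield $\ellfpf(z)-\ellfpf(y)=+1$ (conditional on the no-intermediate-index condition): the ``nested'' configuration $y(i)<i<j<y(j)$ appears in both parts, while $y(i)<y(j)<i<j$ is specific to (a) and $i<j<y(i)<y(j)$ is specific to (b). The remaining configurations give a length change of $-1$ or something of larger absolute value, and so are excluded. The main obstacle is the intermediate-index bookkeeping in the second class above: it is the symmetric contribution of the pairs $(e,\cdot)$ and $(\cdot,y(e))$ that must be shown to cancel out exactly when the stated ``no $e$ with $y(i)<y(e)<y(j)$'' condition holds, and this is the step where the fpf-involution hypothesis does real work beyond what is needed for the classical Bruhat order analogue.
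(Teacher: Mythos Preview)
The paper does not prove this proposition; it is simply quoted from \cite[Proposition~4.9]{HMP3} and used as a black box, so there is no argument here to compare yours against.

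That said, your outline is the standard direct approach and would succeed with two corrections. First, under the hypothesis $y(i)<i$ of part~(a) there are \emph{four} possible relative orderings of the distinct numbers $y(i),y(j),i,j$, not three: you omitted $y(j)<y(i)<i<j$. In that configuration conjugation by $(i,j)$ lowers $\ellfpf$, so it is rightly absent from the statement, but your case analysis must still dispose of it (and symmetrically in part~(b)). Second, your description of the intermediate-index contribution is slightly off: it is not that the ``net change vanishes'' when no forbidden $e$ exists, but rather that each $e$ with $i<e<j$ and $y(i)<y(e)<y(j)$ contributes an excess of (at least) $2$ to $\ellfpf(z)-\ellfpf(y)$, via the pairs involving $(i,e)$ and $(e,j)$ and their images under $y$. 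Hence the covering condition $\ellfpf(z)=\ellfpf(y)+1$ forces there to be no such $e$, while the ``base'' contribution from $\{i,j,y(i),y(j)\}$ alone already gives $+1$ in the admissible configurations. With these fixes the computation is routine.
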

 
\begin{remark}
Let $y \in \ISp_\infty$ and $i<j$ and $t=(i,j) \in S_\infty$. 
The cases when $y \lessF tyt$ correspond to the following pictures,
in which the edges indicate the cycle structure of the relevant involutions 
restricted to $\{i,j,y(i),y(j)\}$:
\[
y=\arcdiagram{
*{}    \myarc{0.9}{rr}  & *{}  \myarc{0.9}{rr}  & *{i} & *{j} 
}
\lessF
\arcdiagram{
*{}    \myarc{0.9}{rrr}  & *{}  \myarc{0.5}{r}  & *{i} & *{j} 
}=tyt,\]
\[ 
y=\arcdiagram{
*{}    \myarc{0.5}{r}  & *{i}   & *{j} \myarc{0.5}{r} & *{} 
}
\lessF
\arcdiagram{
*{}    \myarc{0.9}{rr}  & *{i}  \myarc{0.9}{rr}  & *{j} & *{} 
} = tyt,\]
\[
y=\arcdiagram{
*{i}    \myarc{0.9}{rr}  & *{j}  \myarc{0.9}{rr}  & *{} & *{} 
}
\lessF
\arcdiagram{
*{i}    \myarc{0.9}{rrr}  & *{j}  \myarc{0.5}{r}  & *{} & *{} 
}=tyt.
\]

\end{remark}

\subsection{Symplectic transitions}\label{sp-tr-sect}

For distinct $i,j \in \PP$,
define $\u_{ij}$ to be the linear operator 
with $\Gfpf_z \u_{ij} = \Gfpf_{(i,j)z(i,j)}$ for $z \in \ISp_\infty$.
One cannot hope for a symplectic version of Theorem~\ref{lenart-thm}
since products of the form $(1+\beta x_k) \Gfpf_z$ may fail to be linear combinations
of symplectic Grothendieck polynomials.
There is an analogue of Theorem~\ref{lenart-thm2}, however:

\begin{theorem}\label{sp-lenart-thm}
Fix $v \in \ISp_\infty$ and $j,k \in \PP$ with $v(k) = j<k = v(j)$.
Suppose 
\be\label{iill-eq} 1 \leq i_1  < i_2<\dots < i_p <j< k
 < l_q  < \dots < l_2 < l_1\ee
are the integers such that $v \lessF (i,j)v(i,j)$ and $v\lessF (k,l)v(k,l)$.
Then
\be\label{*} 
(1+\beta x_j)(1+\beta x_k) \Bigl[ \Gfpf_v\cdot  (1+\beta\u_{i_1j})(1+\beta\u_{i_2j})\cdots (1+\beta\u_{i_pj})\Bigr]
\ee
is equal to
\be\label{**}
\Gfpf_v\cdot (1+\beta\u_{kl_1})(1+\beta\u_{kl_2})\cdots (1+\beta\u_{kl_q}).
\ee
\end{theorem}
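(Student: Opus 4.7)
The plan is to prove Theorem~\ref{sp-lenart-thm} by induction on $\ellfpf(v)$, mimicking the structure of the proof of Theorem~\ref{lenart-thm2} but working at the level of involutions. First I would isolate a base case where $\Gfpf_v$ is explicitly given by a product formula --- specifically, when $v$ is Sp-dominant so that Theorem~\ref{dom-thm} applies and $\Gfpf_v = \prod_{(a,b) \in \DSp(v)} (x_a + x_b + \beta x_a x_b)$. For such $v$, since $(j,k)$ is a $2$-cycle, the constraints provided by Proposition~\ref{lessF-prop} force the cover sets $\{i_1,\ldots,i_p\}$ and $\{l_1,\ldots,l_q\}$ to have tractable structure, and both sides of \eqref{*}$=$\eqref{**} can be reduced to explicit products that match after direct expansion.

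For the inductive step, given $v$ not in the base case, I would select an index $i \in \PP$ realizing a proper descent, i.e., $i+1 \neq v(i) > v(i+1) \neq i$, so that $v' := s_i v s_i$ satisfies $\ellfpf(v') = \ellfpf(v) - 1$ and $\varpi_i \Gfpf_{v'} = \Gfpf_v$. Applying $\varpi_i$ to the identity for $v'$ (which holds by the inductive hypothesis, applied to whatever $2$-cycle $(j',k')$ of $v'$ maps to $(j,k)$ under conjugation by $s_i$) should recover the desired identity for $v$. The Leibniz-type relation \eqref{leib-eq} controls how $\varpi_i$ interacts with the scalar factor $(1+\beta x_j)(1+\beta x_k)$, and \eqref{iwc-eq} handles the case when $i \notin \{j-1, j, k-1, k\}$. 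One must check that the operators $\varpi_i$ and $\u_{i_r j}$, $\u_{k l_r}$ commute in an appropriate sense, which follows from Proposition~\ref{lessF-prop} describing how the covers $\lessF$ transform under conjugation by $s_i$.

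The main obstacle will be the combinatorial bookkeeping: the pair $(j,k)$ and the cover sets $\{i_r\}$, $\{l_r\}$ all depend on $v$, and a case analysis is needed to match them with the corresponding data for $v'$. In particular, when $i$ is adjacent to $j$ or $k$, the $2$-cycle $(j,k)$ itself can shift, and new covers may appear or disappear in the passage from $v'$ to $v$. An alternative (and perhaps cleaner) route would be to first establish a direct expansion formula for $(1+\beta x_j)(1+\beta x_k)\Gfpf_v$ as a $\ZZ[\beta]$-linear combination of $\Gfpf_z$'s --- a genuine symplectic analog of Theorem~\ref{lenart-thm} --- and then mirror the sign-cancellation bijection in the proof of Theorem~\ref{lenart-thm2} over the set of subsets $E \subset \{i_1,\ldots,i_p\}$ and $F \subset \{l_1,\ldots,l_q\}$. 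This approach is subtle because, as noted in the text preceding Theorem~\ref{sp-lenart-thm}, $(1+\beta x_k)\Gfpf_v$ on its own need \emph{not} be a linear combination of symplectic Grothendieck polynomials; the joint factor $(1+\beta x_j)(1+\beta x_k)$ and the $2$-cycle hypothesis $v(j)=k$ are essential to make the expansion work.
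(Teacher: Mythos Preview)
Your overall strategy---induction on $\ellfpf(v)$ via the divided difference operators $\varpi_i$, together with a case analysis tracking how the cover sets transform under conjugation by $s_i$---is exactly what the paper does. However, the \emph{direction} of your induction is reversed, and as written the inductive step fails.

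You propose, for $v$ not in the base case, to choose $i$ with $i+1 \neq v(i) > v(i+1) \neq i$, set $v' := s_i v s_i$, and use that $\varpi_i \Gfpf_{v'} = \Gfpf_v$. But this last equation is false. Since $v$ has a descent at $i$ in your sense, the conjugate $v'$ has an \emph{ascent} at $i$ (one checks $v'(i) = v(i+1) < v(i) = v'(i+1)$), so by the remark following Theorem-Definition~\ref{sp-thm1} one only gets $\varpi_i \Gfpf_{v'} = -\beta \Gfpf_{v'}$. The correct relation is $\varpi_i \Gfpf_v = \Gfpf_{v'}$, which lets you pass from the identity for $v$ (larger $\ellfpf$) to the identity for $v'$ (smaller $\ellfpf$), not the other way around. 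The operator $\varpi_i$ is not invertible, so there is no way to push the identity upward. Consequently the induction must be \emph{downward}: the base case consists of the maximal elements $v = n\cdots 321$ for $n \in 2\PP$ (where one checks that $p=0$, $q=1$, $l_1 = n+1$, and the claim is a one-line consequence of Theorem~\ref{dom-thm}), and the inductive step assumes the theorem for $v$ and deduces it for each $w = s_d v s_d$ with $\ellfpf(w) = \ellfpf(v)-1$. Your proposed base class of general $\Sp$-dominant involutions is both broader than necessary and sits at the wrong end of the poset for the correct direction.

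Once the direction is fixed, the remainder of your outline matches the paper: apply $\varpi_d$ to both sides of the known identity, use \eqref{leib-eq} when $d \in \{j-1,j,k-1,k\}$ and \eqref{iwc-eq} otherwise, and carry out a case analysis (seven cases in the paper, according to the position of $d$ relative to $j$ and $k$, with further subcases) invoking Proposition~\ref{lessF-prop} to match up $\Asc^\pm(v,j,k)$ with $\Asc^\pm(w,j',k')$. Your alternative route---first proving a direct symplectic analogue of Theorem~\ref{lenart-thm} and then running a sign-cancellation argument---is not pursued in the paper, and indeed no such single-variable expansion is available for precisely the reason you identify.
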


This is a generalization of  \cite[Theorem 4.17]{HMP3},
which one recovers by subtracting $\Gfpf_v$ from \eqref{*} and \eqref{**}, dividing
by $\beta$,
and then setting $\beta=0$.
These results belong to a larger family of similar formulas 
related to Schubert calculus; see also \cite{BilleyTransitions,LamShimozono,MarZha}.
Before giving the proof, we present one example.

\begin{example}
If $v = (1,2)(3,5)(4,8)(6,7) \in \ISp_\infty$ and $(j,k) = (3,5)$, then 
we have $\{i_1<i_2<\dots<i_p\} = \{2\}$ and $\{l_1>l_2>\dots>l_q\} = \{6,8\}$
and Theorem~\ref{sp-lenart-thm} is equivalent, after a few manipulations, to the claim that
$ (x_3 + x_5 + \beta x_3 x_5)  \Gfpf_{(1,2)(3,5)(4,8)(6,7)} + (1+\beta x_3)(1+\beta x_5) \Gfpf_{(1,3)(2,5)(4,8)(6,7)}$
is equal to
$\Gfpf_{(1,2)(3,8)(4,5)(6,7)} + \Gfpf_{(1,2)(3,6)(4,8)(5,7)} + \beta \Gfpf_{(1,2)(3,8)(4,6)(5,7)}.
$
\end{example}

\def\fff{\tau}

\begin{proof}[Proof of Theorem~\ref{sp-lenart-thm}]
The proof is by downward induction on $\ellfpf(v)$.
As a base case, suppose $v = n\cdots 321 \in \ISp_\infty$ where $n \in 2\PP$, so that $j = n+1-k$.
Then $p=0$, $q=1$, $l_1 = n+1$, and the theorem reduces to the claim that 
\[(x_j + x_{n+1-j} + \beta x_j x_{n+1-j}) \Gfpf_{n\cdots 321} = \Gfpf_{w}\]
for $w:=(k,n+1)v(k,n+1)$.
This follows from 
Theorem~\ref{dom-thm}
since $w$ is $\Sp$-dominant with $\DSp(w) =\DSp(v) \sqcup\{(n+1-j,j)\}$.

Now let $v \in \ISp_\infty\setminus\{\wfpf\}$ and $j,k \in \PP$ be arbitrary with $v(k) = j < k = v(j)$.
It is helpful to introduce some relevant notation. Define 
\[
\ba
\Pi^-(v,j,k) &:= \Gfpf_v\cdot  (1+\beta\u_{i_1j})(1+\beta\u_{i_2j})\cdots (1+\beta\u_{i_pj})
\\
\Pi^+(v,j,k) &:= \Gfpf_v\cdot (1+\beta\u_{kl_1})(1+\beta\u_{kl_2})\cdots (1+\beta\u_{kl_q})
\ea
\]
and let $\Asc^-(v,j,k) =\{i_1,i_2,\dots,i_p\}$ and $\Asc^+(v,j,k) = \{l_1,l_2,\dots,l_q\}$
where the indices $i_1,i_2,\dots,i_p$ and $l_1,l_2,\dots,l_q$ are as in \eqref{iill-eq}.
For each nonempty subset $A=\{a_1<a_2<\dots<a_m\} \subset \Asc^-(v,j,k)$, define 
\[\fff^-_A(v,j,k) = \sigma v\sigma^{-1},\quad\text{where $\sigma  := (a_1, a_2,\dots,a_m, j) \in S_\infty$.}\]
For each  nonempty subset $B=\{b_m<\dots<b_2<b_1\} \subset \Asc^+(v,j,k)$, define 
\[\fff^+_B(v,j,k) = \sigma v\sigma^{-1},
\quad\text{where $\sigma  := (b_1, b_2,\dots,b_m, k) \in S_\infty$.}
\]
For empty sets, we define $\fff^\pm_\varnothing(v,j,k) = v$.
It then follows from Proposition~\ref{lessF-prop}
that $\ellfpf(\fff^\pm_S(v,j,k))  = \ellfpf(v) + |S|$ for all choices of $S$, and we have
 \be\label{sumsum-eq}
\Pi^\pm(v,j,k) = \sum_{S\subset \Asc^\pm(v,j,k)} \beta^{|S|} \Gfpf_{\fff^\pm_S(v,j,k)}.
 \ee
 If we represent elements of $\ISp_\infty$ as arc diagrams, i.e., as perfect matchings on the positive integers 
with an edge for each 2-cycle, 
  then the elements $\fff^\pm_S(v,j,k)$ can be understood as follows.
  The arc diagram of $\fff^-_S(v,j,k)$ is formed from $v$ by cyclically shifting up the endpoints $S \sqcup \{j\}$. For example, if the relevant part of the arc diagram of $v$ appears as
    \[
  \arcstart{
  \\
  \\
    *{\bullet}  \arc{1.8}{rrrrrrr}    & *{\bullet}  \arc{1.35}{rrrrr}  & *{\ }  \arc{0.9}{rrr} &  *{\bullet} \arc{0.45}{r}        & *{\circ}    & *{\ }   & *{\circ}    &
*{\circ}    & *{\circ}  \arc{0.45}{r}   & *{\bullet }   & *{\ \ } \arc{1.35}{rrrrr}        & *{\ \ } \arc{0.9}{rrr}   & *{\ \ }  \arc{0.45}{r}    & *{\ \ }   & *{\ \ }  & *{\ \ }   
 \\   &  &  &  & i_1 & i_2 & i_3  & i_4 & j & k & l_3 & l_2 & l_1
} 
\arcstop
\]
where the elements of $S\sqcup\{j\}$ are labeled by $\circ $ while the elements of $v(S) \sqcup\{k\}$ are labeled by $\bullet$, then the arc diagram of $\fff^-_S(v,j,k)$ is
  \[
  \arcstart{ 
  \\
  \\
    *{\bullet}  \arc{2.025}{rrrrrrrr}    & *{\bullet}  \arc{1.575}{rrrrrr}  & *{\ }  \arc{0.9}{rrr} &  *{\bullet} \arc{0.9}{rrr}        & *{\circ} \arc{1.35}{rrrrr}    & *{\ }   & *{\circ}    &
*{\circ}    & *{\circ}     & *{\bullet }   & *{\ \ } \arc{1.35}{rrrrr}        & *{\ \ } \arc{0.9}{rrr}   & *{\ \ }  \arc{0.45}{r}    & *{\ \ }   & *{\ \ }  & *{\ \ }   
 \\   &  &  &  & i_1 & i_2 & i_3  & i_4 & j & k & l_3 & l_2 & l_1
} 
\arcstop.
\]
Similarly, 
  the arc diagram of $\fff^+_S(v,j,k)$ is formed from $v$ by cyclically shifting down the endpoints $\{k\} \sqcup S$. For example, if the arc diagram of $v$ is
    \[
  \arcstart{
  \\
  \\
    *{\ \ }  \arc{1.8}{rrrrrrr}    & *{\ \ }  \arc{1.35}{rrrrr}  & *{\ \ }  \arc{0.9}{rrr} &  *{\ \ } \arc{0.45}{r}        & *{\ \ }    & *{\ }   & *{\ \ }    &
*{\ \ }    & *{\bullet }  \arc{0.45}{r}   & *{\circ }   & *{\circ} \arc{1.35}{rrrrr}        & *{\circ} \arc{0.9}{rrr}   & *{\ \ }  \arc{0.45}{r}    & *{\ \ }   & *{\bullet}  & *{\bullet}   
 \\   &  &  &  & i_1 & i_2 & i_3  & i_4 & j & k & l_3 & l_2 & l_1
} 
\arcstop
\]
where the elements of $S\sqcup\{k\}$ are labeled by $\circ $ while the elements of $v(S) \sqcup\{j\}$ are labeled by $\bullet$, then the arc diagram of $\fff^-_S(v,j,k)$ is
    \[
  \arcstart{
  \\
  \\
    *{\ \ }  \arc{1.8}{rrrrrrr}    & *{\ \ }  \arc{1.35}{rrrrr}  & *{\ \ }  \arc{0.9}{rrr} &  *{\ \ } \arc{0.45}{r}        & *{\ \ }    & *{\ }   & *{\ \ }    &
*{\ \ }    & *{\bullet }  \arc{0.9}{rrr}   & *{\circ }  \arc{1.575}{rrrrrr}    &  *{\circ}      \arc{1.125}{rrrr}     &   *{\circ}  & *{\ \ }  \arc{0.45}{r}    & *{\ \ }   & *{\bullet}  & *{\bullet}   
 \\   &  &  &  & i_1 & i_2 & i_3  & i_4 & j & k & l_3 & l_2 & l_1
} 
\arcstop.
\]

Suppose the theorem holds for a given $v \in \ISp_\infty\setminus \{\wfpf\}$
in the sense that 
$(1+\beta x_j)(1+\beta x_k) \Pi^-(v,j,k) = \Pi^+(v,j,k)$
for all choices of $v(k) = j<k=v(j)$.
Let $d \in \PP$ be
any positive integer
with $d+1 \neq v(d) > v(d+1) \neq d$
and set
\[ w := s_d vs_d \in \ISp_\infty.\]
Choose integers $j,k\in\PP$ with $v(k) =j < k = v(j)$;
note that we cannot have $j=d<d+1=k$.
In view of the first paragraph, it is enough to show that 
\[
(1+\beta x_{j'})(1+\beta x_{k'}) \Pi^-(w,j',k') = \Pi^+(w,j',k')
\]
where $j' = s_d(j)$ and $k' = s_d(k)$.
There are seven cases to examine:
\begin{itemize}

\item \emph{Case 1:} Assume that $d+1<j$.
We must show that \[(1+\beta x_j)(1+\beta x_k) \Pi^-(w,j,k) = \Pi^+(w,j,k).\]
It suffices by \eqref{iwc-eq} to prove that 
$\varpi_d \Pi^\pm(v,j,k) = \Pi^\pm(w, j, k).$
The $+$ form of this claim is straightforward from Proposition~\ref{lessF-prop} and \eqref{sumsum-eq};
in particular, it holds that $\Asc^+(w,j,k) = \Asc^+(v,j,k)$.
For the other form, there are four subcases to consider:
\begin{itemize}
\item[(1a)] Assume that $d,d+1 \notin \Asc^-(v,j,k)$.
Since $v(d) >v(d+1)$, it is again straightforward from Proposition~\ref{lessF-prop} and \eqref{sumsum-eq}
to show that $\Asc^-(w,j,k) = \Asc^-(v,j,k)$ and $\varpi_d \Pi^-(v,j,k) = \Pi^-(w, j, k)$.

\item[(1b)] Assume that $d\in  \Asc^-(v,j,k)$ and $d+1 \notin \Asc^-(v,j,k)$.
Then \[\Asc^-(w,j,k) = \Asc^-(v,j,k) \setminus\{d\} \sqcup\{d+1\}\]
and $d+1 \neq \fff^-_S(v,j,k)(d) > \fff^-_S(v,j,k)(d+1) \neq d$ for all subsets $S \subset \Asc^-(v,j,k)$,
so we again have $\varpi_d \Pi^-(v,j,k) = \Pi^-(w, j, k)$.

\item[(1c)] Assume that $d\notin  \Asc^-(v,j,k)$ and $d+1 \in \Asc^-(v,j,k)$.
This can only occur if $d < k < v(d)$, so we have
\[\Asc^-(w,j,k) = \Asc^-(v,j,k) \setminus\{d+1\} \sqcup\{d\}.\]
From here, we deduce that $\varpi_d \Pi^-(v,j,k) = \Pi^-(w, j, k)$ by 
an argument similar to the one in case (1b). 

\item[(1d)] Assume that $d,d+1 \in \Asc^-(v,j,k)$.
Three situations are possible for the relative order of $d$, $d+1$, $v(d)$, and $v(d+1)$.
First suppose $v(d+1) < d<d+1<v(d)$.
Then 
\be\label{again-eq}\Asc^-(w,j,k) = \Asc^-(v,j,k) \setminus\{d\}\ee
and every $i \in \Asc^-(v,j,k)$ with $i<d$ must have $v(d)<v(i)<k$.
It follows that if $S \subset \Asc^-(v,j,k)$ then
\be\label{again-eq2} \varpi_d \Gfpf_{\fff^-_S(v,j,k)} = \begin{cases}
\Gfpf_{\fff^-_{S\setminus\{d\}}(v,j,k)}  &\text{if }d,d+1 \in S \\[-8pt]\\
-\beta \cdot \Gfpf_{\fff^-_S(v,j,k)} &\text{if }d\notin S,\ d+1\in S\\[-8pt]\\
\Gfpf_{\fff^-_{S}(w,j,k)}  &\text{if }d,d+1 \notin S \\[-8pt]\\
\Gfpf_{\fff^-_{S\setminus\{d\}\sqcup \{d+1\}}(w,j,k)}  &\text{if }d\in S,\ d+1 \notin S.
\end{cases}
\ee
If we have $v(d+1) < v(d) < d$ or $j < v(d+1) < v(d)<k$
then \eqref{again-eq} and \eqref{again-eq2} both still hold and follow by similar reasoning.
Combining these identities with \eqref{sumsum-eq} gives $\varpi_d \Pi^-(v,j,k) = \Pi^-(w, j, k)$.
\end{itemize}
We conclude from this analysis that $\varpi_d \Pi^\pm(v,j,k) = \Pi^\pm(w, j, k).$

\item \emph{Case 2:} Assume that $d+1=j$, so that $k<v(j-1)$.
We must show that \[(1+\beta x_{j-1})(1+\beta x_k) \Pi^-(w,j-1,k) = \Pi^+(w,j-1,k).\]
It follows from \eqref{leib-eq} that 
$
\varpi_{j-1}\((1+\beta x_{j})(1+\beta x_k) \Pi^-(v,j,k)\)
$
is equal to
\[
(1+\beta x_{j-1})(1+\beta x_k) \varpi_{j-1} \Pi^-(v,j,k) - \beta \cdot \Pi^+(v,j,k)
\]
and it is easy to see that 
$\varpi_{j-1} \Pi^-(v,j,k) =  \Pi^-(w,j-1,k)$.
Thus, it suffices to show that 
\be\label{needed-eq}(\varpi_{j-1}+\beta)\Pi^+(v,j,k) =  \Pi^+(w,j-1,k).\ee
It follows from Proposition~\ref{lessF-prop} that
$ \Asc^+(w,j-1,k) = \{ l \in \Asc^+(v,j,k) : l < v(j-1) \} \sqcup \{ v(j)\}.$
We deduce that if $S \subset \Asc^+(v,j,k)$ then 
\[  \varpi_{j-1} \Gfpf_{\fff^+_S(v,j,k)} = \begin{cases}
\Gfpf_{\fff^+_{S}(w,j-1,k)}  &\text{if }S\subset  \Asc^+(w,j-1,k) \\[-8pt]\\
-\beta \cdot \Gfpf_{\fff^+_S(v,j,k)} &\text{otherwise}.
\end{cases}
\]
If $v(j) \in S \subset \Asc^+(w,j-1,k) $ then 
$\fff^+_{S}(w,j-1,k) = \fff^+_{S\setminus\{v(j)\}}(v,j,k).$
Combining these identities with \eqref{sumsum-eq} shows the needed claim \eqref{needed-eq}.

\item \emph{Case 3:} Assume that $d=j$,
so that either $v(j+1) < j<j+1<k$ or $j<j+1<v(j+1)<k$.
We must show that \[(1+\beta x_{j+1})(1+\beta x_k) \Pi^-(w,j+1,k) = \Pi^+(w,j+1,k).\]
It follows from \eqref{leib-eq} that 
$
\varpi_{j}\((1+\beta x_{j})(1+\beta x_k) \Pi^-(v,j,k)\)
$
is equal to
\[
(1+\beta x_{j+1})(1+\beta x_k) (\varpi_{j}+\beta) \Pi^-(v,j,k).
\]
It is easy to deduce that 
$\varpi_{j} \Pi^+(v,j,k) =  \Pi^+(w,j+1,k)$ from Proposition~\ref{lessF-prop},
so  it suffices to show that 
\be\label{nnn-eq} (\varpi_{j}+\beta) \Pi^-(v,j,k) = \Pi^-(w,j+1,k).\ee
First assume $v(j+1) < j<j+1<k$. 
Then every $i \in \Asc^-(v,j,k)$ with $i < v(j+1)$ 
must have $j+1 < v(i) < k$, and $\Asc^-(w,j+1,k)$ is equal to 
\[ \{ i \in \Asc^-(v,j,k) : v(j+1)<v(i)\text{ if }v(j+1) \leq i<j\} \sqcup \{ j\}.
\]
As in Case 2, we deduce that if $S \subset \Asc^-(v,j,k)$ then 
\be\label{same-eq3}  \varpi_{j} \Gfpf_{\fff^-_S(v,j,k)} = \begin{cases}
\Gfpf_{\fff^-_{S}(w,j+1,k)}  &\text{if }S\subset  \Asc^-(w,j+1,k) \\[-8pt]\\
-\beta \cdot \Gfpf_{\fff^-_S(v,j,k)} &\text{otherwise}.
\end{cases}
\ee
On the other hand, if $j \in S \subset \Asc^-(w,j+1,k) $ then 
\be\label{same-eq4}
\fff^-_{S}(w,j+1,k) = \fff^-_{S\setminus\{j\}}(v,j,k).
\ee
Combining these identities with \eqref{sumsum-eq} gives \eqref{nnn-eq} as desired.
Alternatively,  if we have $j<j+1<v(j+1)<k$,
then 
\[
\Asc^-(w,j+1,k) = \{ i \in \Asc^-(v,j,k) : v(j+1)<v(i)<k\} \sqcup \{ j\}
\]
and we deduce by similar reasoning that the identities \eqref{same-eq3} and \eqref{same-eq4} both still hold,
so \eqref{nnn-eq} again follows.

\item \emph{Case 4:} Assume that $j<d$ and $d+1<k$.
We must show that \[(1+\beta x_{j})(1+\beta x_k) \Pi^-(w,j,k) = \Pi^+(w,j-1,k).\]
It suffices by \eqref{iwc-eq} to prove that 
$\varpi_d \Pi^\pm(v,j,k) = \Pi^\pm(w, j, k).$
There are three subcases to consider:
\ben
\item[(4a)] If $j < v(d) < k$ or $j<v(d+1)<k$ or $v(d+1)  < j < k < v(d)$ then
the desired identities are straightforward from Proposition~\ref{lessF-prop}.

\item[(4b)] Assume that $v(d+1) < v(d) < j$.
In this case it is easy to see that $\varpi_d \Pi^+(v,j,k) = \Pi^+(w, j, k)$
and if $v(d+1) \notin \Asc^-(v,j,k)$,
then
we likewise deduce that $\varpi_d \Pi^-(v,j,k) = \Pi^-(w, j, k).$
Suppose instead that $v(d+1) \in \Asc^-(v,j,k)$. 
We then also have $v(d) \in \Asc^-(v,j,k)$,
but no $i \in \Asc^-(v,j,k)$ is such that $v(d+1)<i<v(d)$,
and  \[
\Asc^-(w,j,k) = \Asc^-(v,j,k)\setminus\{v(d+1)\}.
\]
It follows that if $S \subset \Asc^-(v,j,k)$ then
\[ \varpi_d \Gfpf_{\fff^-_S(v,j,k)} = \begin{cases}
\Gfpf_{\fff^-_{S\setminus\{v(d)\}}(v,j,k)}  &\text{if }v(d),v(d+1) \in S \\[-8pt]\\
-\beta \cdot \Gfpf_{\fff^-_S(v,j,k)} &\text{if }v(d)\notin S,\ v(d+1)\in S\\[-8pt]\\
\Gfpf_{\fff^-_{S}(w,j,k)}  &\text{if }v(d+1) \notin S.
\end{cases}
\]
Combining this with \eqref{sumsum-eq} gives $\varpi_d \Pi^-(v,j,k) = \Pi^-(w, j, k).$

\item[(4c)] Assume that $ k < v(d+1) < v(d)$.
This is the mirror image of (4b) and we get
 $\varpi_d \Pi^\pm(v,j,k) = \Pi^\pm(w, j, k)$ by symmetric arguments.
\een

\item \emph{Case 5:} Assume that $d+1=k$, 
so that either $j < k-1<k < v(k-1)$ or $j < v(k-1) < k-1<k$.
We must show that \[(1+\beta x_{j})(1+\beta x_{k-1}) \Pi^-(w,j,k-1) = \Pi^+(w,j,k-1).\]
It follows from \eqref{leib-eq} that 
$
\varpi_{k-1}\((1+\beta x_{j})(1+\beta x_k) \Pi^-(v,j,k)\)
$
is equal to
\[
(1+\beta x_{j})(1+\beta x_{k-1}) \varpi_{j} \Pi^-(v,j,k) - \beta \cdot \Pi^+(v,j,k)
\]
and it is easy to deduce that 
$\varpi_{k-1} \Pi^-(v,j,k) =  \Pi^-(w,j,k-1)$.
It therefore suffices to show that 
$ (\varpi_{k-1}+\beta) \Pi^+(v,j,k) = \Pi^+(w,j,k-1)$.
The required argument is the mirror image of Case 3; we omit the details.

\item \emph{Case 6:} Assume that $d=k$.
We must show that \[(1+\beta x_{j})(1+\beta x_{k+1}) \Pi^-(w,j,k+1) = \Pi^+(w,j,k+1).\]
It follows from \eqref{leib-eq} that 
$
\varpi_{k}\((1+\beta x_{j})(1+\beta x_k) \Pi^-(v,j,k)\)
$ is equal to
\[
(1+\beta x_{j})(1+\beta x_{k+1}) (\varpi_{k}+\beta) \Pi^-(v,j,k)
\]
and it is easy to see that 
$\varpi_{k} \Pi^+(v,j,k) =  \Pi^+(w,j,k+1)$.
Thus, it suffices to show that 
$(\varpi_{k}+\beta) \Pi^-(v,j,k) =  \Pi^-(w,j,k+1).$
The required argument is the mirror image of Case 2; we omit the details.

\item \emph{Case 7:} Finally, assume that $k<d$.
We must show that \[(1+\beta x_{j})(1+\beta x_{k}) \Pi^-(w,j,k) = \Pi^+(w,j,k).\]
It suffices by \eqref{iwc-eq} to prove that 
$\varpi_d \Pi^\pm(v,j,k) = \Pi^\pm(w, j, k).$
The required argument is the mirror image of Case 1; we omit the details.

\end{itemize}
This case analysis completes our inductive proof. 
\end{proof}

\begin{corollary}
Suppose $v \in \ISp_\infty$ and $j,k \in \PP$ have $ j<k=v(j)$. Then 
\[ (1+\beta x_j)(1+\beta x_k)\Gfpf_v \in \ZZ[\beta]\spanning\left\{ \Gfpf_z : z \in \ISp_\infty\right\}.\]
\end{corollary}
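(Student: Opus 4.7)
The strategy I would adopt is to apply Theorem~\ref{sp-lenart-thm} and then argue by induction on $\ellfpf(v)$. First I would fix an even integer $n$ such that $v \in \ISp_n$ and $j, k \leq n$; then $\ISp_n$ is finite and $\ellfpf$ is bounded on it, so downward induction on $\ellfpf(v)$ within $\ISp_n$ is well-defined. The theorem provides the identity
\[
(1+\beta x_j)(1+\beta x_k)\,\Pi^-(v,j,k) \;=\; \Pi^+(v,j,k),
\]
whose right-hand side is by construction a finite $\ZZ[\beta]$-linear combination of symplectic Grothendieck polynomials and hence lies in $\ZZ[\beta]\spanning\{\Gfpf_z : z \in \ISp_\infty\}$.

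The plan is to expand $\Pi^-(v,j,k) = \Gfpf_v + \sum_{\varnothing \neq S \subseteq \Asc^-(v,j,k)} \beta^{|S|}\,\Gfpf_{\fff^-_S(v,j,k)}$ and isolate the target, yielding
\[
(1+\beta x_j)(1+\beta x_k)\Gfpf_v \;=\; \Pi^+(v,j,k) \;-\; \sum_{\varnothing \neq S} \beta^{|S|}(1+\beta x_j)(1+\beta x_k)\Gfpf_{\fff^-_S(v,j,k)}.
\]
For the base case $v = n\cdots 321$ the set $\Asc^-(v,j,k)$ is empty (since this involution is maximal in Bruhat order within $\ISp_n$), so the sum vanishes and the claim follows immediately from the theorem. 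For the inductive step, each involution appearing in the sum satisfies $\ellfpf(\fff^-_S(v,j,k)) = \ellfpf(v) + |S| > \ellfpf(v)$ and still lies in $\ISp_n$, so inductively the summands on the right-hand side should be elements of the span.

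The hardest part will be matching factors across the induction. As the arc-diagram picture in the proof of Theorem~\ref{sp-lenart-thm} makes clear, the involution $y := \fff^-_S(v,j,k)$ generally does not satisfy $y(j) = k$: the arc originally joining $j$ and $k$ is cyclically shifted so that $y(i_1) = k$ for $i_1 := \min S$. Consequently the induction hypothesis as literally stated furnishes $(1+\beta x_{i_1})(1+\beta x_k)\Gfpf_y \in \ZZ[\beta]\spanning\{\Gfpf_z\}$, not the desired product with the factor $(1+\beta x_j)(1+\beta x_k)$. I would resolve this factor-mismatch by reapplying Theorem~\ref{sp-lenart-thm} to the pair $(y, i_1, k)$ and carefully combining the resulting identity with the one for $(v, j, k)$, keeping track of how the factor $(1+\beta x_j)(1+\beta x_k)$ propagates through the recursion so that the induction closes and termination is ensured by the strict increase of $\ellfpf$ within the finite set $\ISp_n$.
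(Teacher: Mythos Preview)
Your proposal correctly sets up the recursion from Theorem~\ref{sp-lenart-thm} and, to your credit, identifies the real obstacle: when you isolate $(1+\beta x_j)(1+\beta x_k)\Gfpf_v$ and try to apply induction to the remaining terms $(1+\beta x_j)(1+\beta x_k)\Gfpf_{\fff^-_S(v,j,k)}$, the pair $(j,k)$ is no longer a cycle of $y=\fff^-_S(v,j,k)$, so the inductive hypothesis does not apply. Your suggested fix---``reapply Theorem~\ref{sp-lenart-thm} to $(y,i_1,k)$ and carefully combine''---does not close this gap. That second application yields control over $(1+\beta x_{i_1})(1+\beta x_k)\Gfpf_y$, not over $(1+\beta x_j)(1+\beta x_k)\Gfpf_y$, and there is no algebraic identity relating the two products that stays within the $\ZZ[\beta]$-span of symplectic Grothendieck polynomials. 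Restricting to a finite $\ISp_n$ does not help: the induction simply does not close, regardless of termination.

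The paper's argument takes a genuinely different route and avoids this trap by not insisting on a terminating induction at all. It observes that iterating Theorem~\ref{sp-lenart-thm} expresses $(1+\beta x_j)(1+\beta x_k)\Gfpf_v$ as a \emph{possibly infinite} formal $\ZZ[\beta]$-linear combination of $\Gfpf_z$'s, and then invokes two external facts you do not mention: Corollary~\ref{basis-cor}, which says $\{\fkG_w\}_{w\in S_\infty}$ is a $\ZZ[\beta]$-basis of the polynomial ring, and \cite[Theorem~3.12]{MP}, which says that no $\fkG_w$ occurs in the $\fkG$-expansion of two distinct $\Gfpf_z$'s. Since $(1+\beta x_j)(1+\beta x_k)\Gfpf_v$ is a polynomial, its $\fkG_w$-expansion is finite; disjointness of supports then forces only finitely many $\Gfpf_z$'s to appear. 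These two ingredients are the missing idea, and without them your argument remains incomplete.
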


\begin{proof}
It follows by induction from Theorem~\ref{sp-lenart-thm}
that $(1+\beta x_j)(1+\beta x_k)\Gfpf_v$ is a possibly infinite $\ZZ[\beta]$-linear combination of $\Gfpf_z$'s.
This combination must be finite by Corollary~\ref{basis-cor}, 
since no Grothendieck polynomial $\fkG_w$ appears in the expansion of $\Gfpf_y$ and $\Gfpf_z$ for distinct $y,z \in \ISp_\infty$
by \cite[Theorem 3.12]{MP}.
\end{proof}

A \emph{visible descent} of $z \in \ISp_\infty$ is an integer $i$ such that $z(i+1) < \min\{i,z(i)\}$.
The following corollary is a symplectic analogue of Lascoux's transition equations for Grothendieck polynomials in \cite{Lascoux2000}.

\begin{corollary}\label{sp-lenart-cor}
Let $k \in \PP$ be the last visible descent of $z \in \ISp_\infty$.
Define $l $ to be the largest integer with $k<l$ and $z(l) < \min\{k,z(k)\}$,
and set 
\[v = (k,l)z(k,l)\qquand j = v(k).\] 
Let $1\leq  i_1 < i_2 < \dots < i_p<j$ be the integers with $v \lessF (i,j)v(i,j)$. Then
\[ \beta \Gfpf_z = 
(1+\beta x_j)(1+\beta x_k) \Bigl[ \Gfpf_v\cdot  (1+\beta\u_{i_1j})(1+\beta\u_{i_2j})\cdots (1+\beta\u_{i_pj})\Bigr]-\Gfpf_v.
\]
\end{corollary}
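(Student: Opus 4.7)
The plan is to apply Theorem~\ref{sp-lenart-thm} to $v$ with the given $j$ and $k$. Setting $a := z(k)$ and $b := z(l) = j$, the defining identity $v = (k,l)z(k,l)$ together with the easy facts $b < k$, $a \neq k$, and $a \neq l$ (the last because $z(l) = b \neq k$) gives $v(k) = b = j$ and $v(j) = v(b) = k$, so the theorem's hypothesis $v(k) = j < k = v(j)$ is met. It then yields $(1+\beta x_j)(1+\beta x_k)\Pi^-(v,j,k) = \Pi^+(v,j,k)$, so subtracting $\Gfpf_v$ reduces the corollary to showing $\Pi^+(v,j,k) - \Gfpf_v = \beta\Gfpf_z$. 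Since $\tau^+_{\{l\}}(v,j,k) = (k,l)v(k,l) = z$, the expansion \eqref{sumsum-eq} further reduces matters to the single claim $\Asc^+(v,j,k) = \{l\}$.

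The key structural fact I would establish first is that the possibility $k < a < l$ is ruled out. Because $k$ is the \emph{last} visible descent of $z$, the inequality $z(i+1) \geq \min\{i, z(i)\}$ holds for all $i > k$; applied inductively starting from $z(a) = k$, this forces $z(i) \geq k$ for all $i \geq a$, contradicting $z(l) < \min\{k, a\} \leq k$ when $l > a$. Hence either $a < k$, or else $a > l$; and in the latter case applying the same inequality at position $a-1$ forces $z(a-1) < k$, so by maximality of $l$ one has $a-1 \leq l$, giving $a = l+1$. Given this, $l \in \Asc^+(v,j,k)$ is immediate from Proposition~\ref{lessF-prop}(a): the inequality $v(k) = b < k$ puts us in case (a), and the alternative $b < k < l < a$ (if $a > k$) or $b < a < k < l$ (if $a < k$) holds; the intermediate-element clause is verified as part of the next paragraph.

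Finally I would rule out any other $l' \neq l$ with $l' > k$ belonging to $\Asc^+(v,j,k)$. For $k < l' < l$, a downward induction from $z(l) = b$ using $z(i+1) \geq \min\{i, z(i)\}$ shows $z(l') < b$ (first $z(l-1) \leq b$ via the $i = l-1$ instance of the inequality, with strict inequality by the involution property of $z$, and then proceeding inductively); thus $v(l') = z(l') < b = j$ violates the condition $v(l') > j$ required by Proposition~\ref{lessF-prop}(a). For $l' > l$, the case $l' = a$ (possible only if $a > l$, hence $a = l+1$) is excluded because $v(a) = l$ is neither $> l'$ nor $< k$, so neither alternative of Proposition~\ref{lessF-prop}(a) applies; for all other such $l'$ one has $v(l') = z(l')$, and combining the maximality of $l$ with the structural lemma gives $z(l') > a = v(l)$, so taking $e = l \in (k, l')$ violates the intermediate-element clause. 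The main obstacle throughout is establishing and deploying the structural lemma cleanly; once it is in place, the remaining verifications are a systematic case analysis from Proposition~\ref{lessF-prop}.
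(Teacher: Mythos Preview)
Your approach matches the paper's: reduce via Theorem~\ref{sp-lenart-thm} to the single claim $\Asc^+(v,j,k)=\{l\}$, which the paper simply attributes to \cite[Lemma~5.2]{HMP5} (noting it ``also follows as a self-contained exercise''), whereas you carry out that exercise explicitly. Your argument is essentially correct; the only imprecision is in the sub-case $a=l+1$ and $l'\geq l+2$, where ``maximality of $l$ with the structural lemma'' alone only yields $z(l')\geq k$, not $z(l')>a$. To close this, also invoke your earlier fact that $z(e)<b$ for $k<e<l$, together with $z(l)=b$ and $z(a)=k$, to rule out each value in $[k,a]$ as a possible value of $z(l')$; then $e=l$ indeed violates the intermediate-element clause.
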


Note that one could rewrite the right side 
without using any minus signs.

\begin{proof}
It suffices by Theorem~\ref{sp-lenart-thm}
to show that $\Asc^+(v,j,k) = \{l\}$.
This is precisely \cite[Lemma 5.2]{HMP5}, but also follows as a self-contained exercise.
\end{proof}


\section{Stable Grothendieck polynomials}\label{stable-sect}

The limit of a sequence of polynomials or formal power series is defined to converge if the coefficient sequence for any fixed monomial is eventually constant.

Given $n \in \NN$ and $w \in S_\infty$,
write $1^n \times w \in S_\infty$ for the permutation 
that maps $i \mapsto i$ for $i \leq n$ and $i + n \mapsto w(i) + n$ for $i \in \PP$.
The \emph{stable Grothendieck polynomial} of $w\in S_\infty$ is defined as the limit
\be G_w := \lim_{n \to \infty} \fkG_{1^n \times w}.\ee
Remarkably, this always converges to a well-defined symmetric function \cite[\S2]{Buch2002}.
Given $n \in \NN$ and $z \in \ISp_\infty$, 
we similarly write $(21)^n \times z \in \ISp_\infty$ for the involution 
mapping $i \mapsto i - (-1)^i$ for $i \leq 2n$ and $i + 2n \mapsto z(i) + 2n$ for $i \in \PP$.
Following \cite{MP}, the \emph{symplectic stable Grothendieck polynomial} of $z \in \ISp_\infty$ is defined  as
\be\label{gsp-eq}
\GSp_z := \lim_{n \to \infty} \Gfpf_{(21)^n \times z}.
\ee
The next lemma is a consequence of \cite[Theorem 3.12 and Corollary 4.7]{MP}:

\begin{lemma}[\cite{MP}] 
\label{stab-lem}
The limit \eqref{gsp-eq} converges for all $z \in \ISp_\infty$.
Moreover, the resulting power series $\GSp_z$ is
 the image of $\Gfpf_z$ under the linear map 
$\ZZ[\beta][x_1,x_2,\dots] \to \ZZ[\beta][[x_1,x_2,\dots]]$ with $\fkG_w \mapsto G_w$
for $w \in S_\infty$.
\end{lemma}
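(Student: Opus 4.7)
The plan is to derive the lemma by combining two results cited from \cite{MP}. By \cite[Theorem 3.12]{MP} together with Corollary~\ref{basis-cor}, each symplectic Grothendieck polynomial admits a unique finite expansion
\[
\Gfpf_z \;=\; \sum_{w \in S_\infty} a_{z,w}(\beta)\, \fkG_w
\]
with coefficients $a_{z,w}(\beta) \in \ZZ[\beta]$, and in fact (the stronger form of Theorem 3.12 noted in the preceding corollary of the paper) the supports $\{w : a_{z,w} \neq 0\}$ are pairwise disjoint for distinct $z \in \ISp_\infty$. What \cite[Corollary 4.7]{MP} should supply is the \emph{shift compatibility}: for every $n \geq 0$,
\[
\Gfpf_{(21)^n \times z} \;=\; \sum_{w \in S_\infty} a_{z,w}(\beta)\, \fkG_{1^{2n} \times w},
\]
with the same coefficients $a_{z,w}(\beta)$ as in the $n=0$ expansion.

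Once these two ingredients are in hand, the lemma follows essentially in one line. The right-hand side is a finite $\ZZ[\beta]$-linear combination, and each $\fkG_{1^{2n} \times w}$ converges in $\ZZ[\beta][[x_1, x_2, \ldots]]$ to the ordinary stable Grothendieck polynomial $G_w$ as $n \to \infty$ by the classical theorem of Fomin and Kirillov \cite{FK1994}. Termwise passage to the limit therefore shows both that $\lim_{n\to\infty} \Gfpf_{(21)^n \times z}$ converges and that its value equals $\sum_w a_{z,w}(\beta)\, G_w$, which is precisely the image of $\Gfpf_z$ under the linear map $\fkG_w \mapsto G_w$ extended linearly from the basis provided by Corollary~\ref{basis-cor}.

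If instead one had to prove the shift compatibility identity directly, the natural route is induction on $n$, with base case $n=0$ tautological. For the inductive step, both families can be built from dominant base cases via products of the $\varpi_i$ operators by Theorem-Definitions~\ref{groth-thmdef} and \ref{sp-thm1}, and the operation $z \mapsto (21) \times z$ on $\ISp_\infty$ together with $w \mapsto 1 \times w$ on $S_\infty$ shift indices compatibly. The principal obstacle lies in reconciling the different base cases --- $\Gfpf_{n\cdots 321}$ is a product $\prod_{1 \leq i < j \leq n-i}(x_i + x_j + \beta x_i x_j)$, while $\fkG_{n\cdots 321}$ is a staircase monomial $x_1^{n-1} x_2^{n-2} \cdots x_{n-1}$ --- and then verifying that the resulting $\varpi_i$-expressions match across the two families under shifting. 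The uniqueness of the expansion guaranteed by Corollary~\ref{basis-cor}, together with the fact that $\varpi_i$ acts predictably on both bases (via Theorem-Definitions~\ref{groth-thmdef} and \ref{sp-thm1}), makes this reconciliation tractable, and it is precisely what \cite[Corollary 4.7]{MP} is designed to encode.
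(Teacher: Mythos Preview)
Your proposal is correct and matches the paper's approach: the paper does not give a proof but simply states that the lemma ``is a consequence of \cite[Theorem 3.12 and Corollary 4.7]{MP},'' and your first two paragraphs spell out exactly how those two ingredients combine---a finite expansion $\Gfpf_z = \sum_w a_{z,w}\fkG_w$ from Theorem~3.12, a shift-compatibility statement from Corollary~4.7, and then termwise passage to the limit using the known convergence of $\fkG_{1^N\times w}\to G_w$. Your final paragraph sketching an independent proof of shift compatibility is extraneous but harmless.
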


It follows that
$\GSp_z$ is also a symmetric function.
These power series have some stronger symmetry properties, which we explore in this section.

\subsection{$K$-theoretic Schur functions}
\label{subsec:G-lambda}

Besides permutations and involutions, there is also a notion of \emph{stable Grothendieck polynomials}
for partitions, though these would more naturally be called \emph{$K$-theoretic Schur functions}.
 The precise definition is as follows.
 
If $\lambda = (\lambda_1 \geq \lambda_2 \geq \dots \geq \lambda_k > 0)$ is an integer partition, then a \emph{set-valued tableau} of shape $\lambda$
is a map $T : (i,j) \mapsto T_{ij}$ from the Young diagram
\[\YY_\lambda := \{(i,j) \in \PP\times \PP : j \leq \lambda_i\}\]
to the set of finite, nonempty subsets of $\PP$.
For such a map $T$, define
\[x^T := \prod_{(i,j) \in \YY_\lambda} \prod_{k \in T_{ij}} x_k
\qquand |T| := \sum_{(i,j) \in \YY_\lambda} |T_{ij}|.\]
A set-valued tableau $T$ is \emph{semistandard} if one has $\max(T_{ij}) \leq \min(T_{i,j+1})$
and $\max(T_{ij}) < \min(T_{i+1,j})$ for all relevant $(i,j) \in \YY_\lambda$.
Let $\SetSSYT(\lambda)$ denote the set of semistandard set-valued tableaux of shape $\lambda$.

\begin{definition}[\cite{Buch2002}]
The \emph{stable Grothendieck polynomial} of a partition $\lambda$ is 
\[ G_\lambda := \sum_{T \in \SetSSYT(\lambda)} \beta^{|T|-|\lambda|} x^T\in \ZZ[\beta][[x_1,x_2,\dots]].\]
\end{definition}

This definition sometimes appears in the literature with the parameter $\beta$
set to $\pm 1$,
but if we write $G^{(\beta)}_\lambda = G_\lambda$ then 
$(-\beta)^{|\lambda|}G^{(\beta)}_\lambda = G^{(-1)}_\lambda(-\beta x_1,-\beta x_2,\dots)$.
Setting $\beta=0$ transforms $G_\lambda$ to the usual Schur function $s_\lambda$.
For example, if $\lambda = (1)$ then
$
G_{(1)} 
=
s_{(1)} + \beta s_{(1,1)} + \beta^2 s_{(1,1,1)} + \dots.$

The functions $G_\lambda$ are related to $G_w$ for $w \in S_\infty$ by the following result of Buch \cite{Buch2002}.
For a partition $\lambda$ with $k$ parts,
define $w_\lambda \in S_\infty$ to be the permutation
with $w_\lambda(i) =i+ \lambda_{k+1-i}$ for $i \in [k]$ and $w_\lambda(i) < w_\lambda(i+1)$
for all $i>k$.

\begin{theorem}[{\cite[Theorem 3.1]{Buch2002}}]
\label{buch-thm1}
If $\lambda$ is any  partition then $G_{w_\lambda} = G_\lambda$.
\end{theorem}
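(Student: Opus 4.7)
The plan is to reduce Buch's identity to a finite-variable polynomial identity that stabilizes in the limit. Concretely, I would first establish that for each $n \geq 0$,
\begin{equation*}
\fkG_{1^n \times w_\lambda} \;=\; \sum_{T} \beta^{|T|-|\lambda|}\, x^T,
\end{equation*}
with $T$ ranging over semistandard set-valued tableaux of shape $\lambda$ whose entries all lie in $[n+k]$, where $k$ denotes the number of parts of $\lambda$. Once this polynomial identity is in hand, Theorem~\ref{buch-thm1} follows immediately: every $T \in \SetSSYT(\lambda)$ has entries bounded by some finite $N \in \PP$, so $T$ contributes $\beta^{|T|-|\lambda|}x^T$ to the sum as soon as $n+k \geq N$, and therefore the coefficient of each fixed monomial stabilizes as $n \to \infty$ to the corresponding coefficient of $G_\lambda$.

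To prove the polynomial identity, the most efficient route is through the pipe dream (or RC-graph) expansion for Grothendieck polynomials due to Fomin and Kirillov. For a Grassmannian permutation such as $1^n \times w_\lambda$, the pipe dreams contributing to $\fkG_{1^n \times w_\lambda}$ are in explicit weight-preserving bijection with semistandard set-valued tableaux of shape $\lambda$ whose entries lie in $[n+k]$, under which the count of ``$+$-tiles'' matches the excess $|T|-|\lambda|$. A clean realization of this bijection is the Hecke insertion of \cite{BKSTY}, which gives an RSK-style correspondence and yields the identity directly. As an alternative, one can argue by induction on $|\lambda|$ using the divided-difference characterization of Theorem-Definition~\ref{groth-thmdef} together with the transition identity of Theorem~\ref{lenart-thm2}: apply the transition at the unique descent $k' = n+k$ of $1^n \times w_\lambda$, where the set $L$ reduces to a single index and $w(k',l_1)$ is again Grassmannian, identifying it with $1^n \times w_{\lambda'}$ for $\lambda'$ obtained from $\lambda$ by adding a box. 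Solving for $\fkG_{1^n \times w_{\lambda'}}$ then gives a recursion that must be matched term-by-term against a parallel stratification of $\SetSSYT(\lambda')$ by the contents of the new corner box and its row.

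The principal obstacle along either route is aligning the combinatorial data: the pipe dream or divided-difference operation must correspond precisely to the semistandard and set-valued constraints on tableaux, with $\beta$-weights tracking multi-entries consistently, and in the inductive approach one must carry along auxiliary non-Grassmannian terms $\fkG_{w(j,k')}$ (which are generally \emph{not} of the form $\fkG_{1^n \times w_\mu}$) in the transition identity. The Hecke insertion route sidesteps this last difficulty by packaging all such terms into a single bijective statement. In either approach, once the finite identity is established the stabilization to $G_{w_\lambda} = G_\lambda$ is routine.
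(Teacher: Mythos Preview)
The paper does not give its own proof of Theorem~\ref{buch-thm1}: the result is stated with the citation \cite[Theorem 3.1]{Buch2002} and no argument follows. The only related comment is the unnumbered remark after Theorem~\ref{buch-et-al-thm}, which observes that one can recover Theorem~\ref{buch-thm1} from the increasing-tableau expansion of \cite{BKSTY} by checking that a Grassmannian permutation admits a unique increasing tableau whose reading word is a Hecke word for it.

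Your proposal is therefore not being compared against a proof in the paper, but it is a sound outline of how to obtain the result. The finite-variable identity
\[
\fkG_{1^n\times w_\lambda}\;=\;\sum_{T}\beta^{|T|-|\lambda|}x^T,
\]
summed over $T\in\SetSSYT(\lambda)$ with entries in $[n+k]$, is precisely what Buch proves in \cite{Buch2002}, and your stabilization step is immediate. Your primary route through the Fomin--Kirillov pipe dream model is essentially Buch's own argument; by contrast, the paper's remark suggests the shorter but less self-contained route of invoking \cite{BKSTY} as a black box. Your alternative sketch via Theorem~\ref{lenart-thm2} is correctly flagged as problematic: the terms $\fkG_{v(j,k')}$ arising on the left of the transition identity are genuinely non-Grassmannian, so the induction does not close without importing something equivalent to Hecke insertion, at which point you are back to the first route. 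I would simply drop that alternative and present the pipe-dream bijection directly.
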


Write $\sP$ for the set of all partitions.

\begin{theorem}[{\cite[Theorem 1]{BKSTY}}]
\label{buch-et-al-thm}
If $w \in S_\infty$ then 
$G_w \in \NN[\beta]\spanning\left\{ G_\lambda : \lambda \in \sP\right\}.$
\end{theorem}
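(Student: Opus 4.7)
The proof I have in mind is via \emph{Hecke insertion}, the $K$-theoretic analogue of RSK introduced in \cite{BKSTY}. The plan has three steps: first, expand $G_w$ as a generating function over pairs of a Hecke word and a compatible sequence; second, apply a weight-preserving bijection sending each such pair to a pair consisting of an increasing tableau and a semistandard set-valued tableau of the same shape; third, sum over set-valued tableaux of a fixed shape to recover the $G_\lambda$'s with nonnegative integer coefficients.

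For the first step, one establishes the monomial expansion
$$G_w \;=\; \sum_{(a,b)} \beta^{k-\ell(w)}\, x_{b_1} x_{b_2} \cdots x_{b_k},$$
where the sum runs over pairs $(a,b)$ with $a=(a_1,\dots,a_k)$ a \emph{Hecke word} for $w$ (i.e.\ a sequence of indices in $\PP$ whose Demazure product in $S_\infty$ is $w$) and $b=(b_1 \leq \cdots \leq b_k)$ a weakly increasing sequence of positive integers that is strictly increasing whenever $a_i \geq a_{i+1}$. This identity can be checked by induction using the characterizing properties of $\fkG_{1^n \times w}$ from Theorem-Definition~\ref{groth-thmdef} and passing to the stable limit that defines $G_w$.

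The technical heart is the second step. Here one introduces Hecke row insertion: to insert $a_i$ into an increasing tableau (strictly increasing along rows and columns), one proceeds as in classical row insertion but with a modified rule when $a_i$ already occurs in the current row, so that in some situations the shape fails to grow. Reading the word $a$ one letter at a time produces an increasing tableau $P(a)$, and recording the cell touched at step $i$ by the entry $b_i$ yields a semistandard set-valued recording tableau $Q(a,b)$. One must prove that the shape $\sh(P(a)) = \sh(Q(a,b))$ depends only on $a$, that $Q(a,b)$ is semistandard, and, crucially, that $(a,b) \mapsto (P(a), Q(a,b))$ is a bijection onto pairs $(P,Q)$ where $P$ is any increasing tableau whose column reading word is a Hecke word for $w$ and $Q \in \SetSSYT(\sh(P))$. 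Establishing this bijection is the main obstacle: one must define an explicit reverse insertion, which requires a rule identifying the cell of $Q$ containing the most recently inserted letter; the analysis splits on whether the last forward step enlarged the shape, and the verification that reverse insertion is well-defined and single-valued is delicate.

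Granting the bijection, the third step is a short calculation. Grouping the monomial expansion by the insertion tableau gives
$$G_w \;=\; \sum_{P} \beta^{|\sh(P)|-\ell(w)} \sum_{Q\in \SetSSYT(\sh(P))} \beta^{|Q|-|\sh(P)|}\, x^Q \;=\; \sum_{P} \beta^{|\sh(P)|-\ell(w)}\, G_{\sh(P)},$$
where $P$ ranges over increasing tableaux whose column reading word is a Hecke word for $w$. Since every Hecke word for $w$ has length at least $\ell(w)$, the exponent $|\sh(P)| - \ell(w)$ is nonnegative, and each coefficient is a count of increasing tableaux, hence in $\NN$. This produces the desired finite expansion of $G_w$ in $\NN[\beta]\text{-span}\{G_\lambda : \lambda \in \sP\}$; finiteness follows because the exponent of $\beta$ in each term grows with $|\sh(P)|$, while the monomial $x^T$ in any $G_\lambda$ of sufficiently large $|\lambda|$ cannot contribute to a fixed coefficient of $G_w$.
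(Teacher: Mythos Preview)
The paper does not supply its own proof of this theorem: it is quoted directly from \cite{BKSTY}, and the remark immediately following the statement notes that \cite[Theorem~1]{BKSTY} gives the expansion of $G_w$ into $G_\lambda$'s via increasing tableaux, i.e.\ via the Hecke insertion machinery. Your proposal is precisely a sketch of that argument from \cite{BKSTY}, so there is nothing to compare---you have outlined the very proof the paper is citing.

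One small point: your justification of finiteness at the end is slightly roundabout. A cleaner argument is that any increasing tableau $P$ whose column reading word is a Hecke word for $w\in S_n$ has all entries in $\{1,\dots,n-1\}$, and strict increase along rows and columns then forces $\sh(P)\subset (n-1,n-2,\dots,1)$; hence only finitely many $P$ occur.
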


A symplectic analogue of Theorem~\ref{buch-et-al-thm} is known \cite[Theorem 1.9]{Mar}.
Our goal in the rest of this section is to prove a symplectic analogue of Theorem~\ref{buch-thm1}.

\begin{remark*}
 Theorem~\ref{buch-et-al-thm} is a corollary of a stronger result \cite[Theorem 1]{BKSTY}, which 
 gives a formula for the expansion of $G_w$ into $G_\lambda$'s in terms of \emph{increasing tableaux}. 
 Knowing this formula, one can recover 
Theorem~\ref{buch-thm1} by checking that there is a 
unique increasing tableau whose reading word is a \emph{Hecke word} for a Grassmannian permutation.
It may be possible to use a similar strategy to prove our symplectic analogue of Theorem~\ref{buch-thm1}
(given as Theorem~\ref{f-grass-thm}) from \cite[Theorem 1.9]{Mar}. We present a different algebraic proof here, which is independent of \cite{Mar}.
 \end{remark*}

\subsection{Stabilization}
\label{stable-limit-sect}

We refer to the linear map $\ZZ[\beta][x_1,x_2,\dots] \to \ZZ[\beta][[x_1,x_2,\dots]]$ with $\fkG_w \mapsto G_w $
as \emph{stabilization}.
It will be useful in the next two sections to have a description of this operation 
in terms of divided differences.

As in Section~\ref{dd-sect}, let $\cL = \ZZ[\beta][x_1^{\pm1},x_2^{\pm1},\dots]$.
For $i \in \PP$,
write  $\vartheta_i$ for the \emph{isobaric divided difference operator} defined by the formula
\be\label{vartheta-eq}
 \vartheta_i f = \varpi_i (x _i f) 
=  f + x_{i+1}(1+\beta x_i) \partial_i f
\qquad\text{for  $f \in \cL $.}
\ee
We have $\vartheta_i f=f$ if and only if $s_if =f$, 
in which case $\vartheta_i(fg) = f \cdot \vartheta_i g$.
These  operators are idempotent with
$
\vartheta_i\vartheta_i = \vartheta_i
$
for all $i \in \PP$, and we have
\be
\vartheta_i \vartheta_j = \vartheta_j \vartheta_i
\qquand\vartheta_i\vartheta_{i+1} \vartheta_i = \vartheta_{i+1}\vartheta_i \vartheta_{i+1}
\ee
for all $i,j \in \PP$ with $|i-j| > 1$. 
For $w \in S_\infty$ we can therefore define
\[\vartheta_w = \vartheta_{i_1}\vartheta_{i_2}\cdots \vartheta_{i_l}\]
where $w = s_{i_1}s_{i_2} \cdots s_{i_l}$ is any reduced expression.

Given $f \in \ZZ[\beta][[x_1,x_2,\dots]]$ and $n \in \NN$,
write $f(x_1,\dots,x_n)$ for the polynomial obtained by setting 
$x_{n+1}=x_{n+2}=\dots=0$
and let $w_n = n\cdots 321 \in S_n$.

\begin{proposition}\label{stab-prop}
If $v\in S_n$ then 
$\fkG_{1^N\times v}(x_1,\dots,x_n) = \vartheta_{w_n} \fkG_v$ for all $N\geq n$.
\end{proposition}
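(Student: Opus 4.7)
The plan is to prove the identity in two stages: first I would reduce to the case $N=n$ by showing that $\fkG_{1^N\times v}(x_1,\dots,x_n)$ is independent of $N\geq n$, and then I would prove the equality $\fkG_{1^n\times v}(x_1,\dots,x_n) = \vartheta_{w_n}\fkG_v$ by downward induction on $\ell(v)$ in $S_n$.

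For the first stage, I observe that $1^N\times v$ fixes $1,2,\dots,N$, so it has no descents at positions strictly less than $N+1$. The recursion in Theorem-Definition~\ref{groth-thmdef} then yields $\varpi_i\fkG_{1^N\times v} = -\beta\fkG_{1^N\times v}$ for every $i < N$, which forces $s_i \fkG_{1^N\times v} = \fkG_{1^N\times v}$ and hence $\fkG_{1^N\times v}$ is symmetric in $x_1,\dots,x_N$. Combining this symmetry with a monomial-coefficient stability argument, for instance via the Fomin--Kirillov pipe-dream formula for Grothendieck polynomials, shows that the coefficient of each monomial in $x_1,\dots,x_n$ does not change once $N\geq n$.

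For the second stage, the base case $v = w_n$ amounts to verifying the identity $\fkG_{1^n\times w_n}(x_1,\dots,x_n) = \vartheta_{w_n}(x_1^{n-1}x_2^{n-2}\cdots x_{n-1})$; both sides can be identified with the truncation to $n$ variables of the $K$-theoretic Schur function $G_{(n-1,n-2,\dots,1)}$ for the staircase partition, using Theorem~\ref{buch-thm1}. For the inductive step, given $v\neq w_n$, I would choose an ascent $i<n$ of $v$ and set $u=vs_i$, so that $\ell(u)=\ell(v)+1$. The defining recursion yields $\fkG_v = \varpi_i\fkG_u$, and applying the analogous recursion to $1^n\times v$ and $1^n\times u$ (which have ascent and descent at position $n+i$ respectively) gives $\fkG_{1^n\times v} = \varpi_{n+i}\fkG_{1^n\times u}$. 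Invoking the inductive hypothesis $\fkG_{1^n\times u}(x_1,\dots,x_n) = \vartheta_{w_n}\fkG_u$, the claim reduces to the operator equation
\[
\rho_n\bigl(\varpi_{n+i}\fkG_{1^n\times u}\bigr) = \vartheta_{w_n}\bigl(\varpi_i\fkG_u\bigr),
\]
where $\rho_n$ denotes the restriction sending $x_{n+1},x_{n+2},\dots$ to $0$.

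The main obstacle will be establishing this operator equation. The key inputs are the $S_n$-symmetry of $\fkG_{1^n\times u}$ in $x_1,\dots,x_n$ (since $1^n\times u$ fixes $1,\dots,n$), the factorization $\vartheta_{w_n}=\vartheta_{w_n s_i}\vartheta_i$ (valid because $w_n s_i < w_n$ for $i<n$), and the relation $\vartheta_i\varpi_i = \varpi_i$ (which follows from the Leibniz rule~\eqref{leib-eq} applied to $\varpi_i(x_i\,\varpi_i f)$ using $\varpi_i x_i = 1$ and $\varpi_i\varpi_i = -\beta\varpi_i$). Computing $\rho_n\varpi_{n+i}$ applied to a polynomial symmetric in $x_1,\dots,x_n$ reduces, after evaluating the divided difference at $x_{n+i}=x_{n+i+1}=0$, to extracting the linear-in-$x_{n+i}$ and linear-in-$x_{n+i+1}$ parts of $\fkG_{1^n\times u}$; the technical heart of the proof consists in matching these with the action of $\vartheta_{w_n s_i}\varpi_i$ on $\fkG_u$ under the inductive hypothesis.
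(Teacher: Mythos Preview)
Your two-stage outline is reasonable in spirit, and Stage~1 (stability in $N$) is fine once you invoke the Fomin--Kirillov convergence of $\fkG_{1^N\times v}$. However, Stage~2 as written does not go through, for two reasons.

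First, the base case is not justified by Theorem~\ref{buch-thm1}. That theorem identifies $G_{w_\lambda}$ with $G_\lambda$ only for \emph{Grassmannian} permutations $w_\lambda$, and the longest element $w_n$ is not Grassmannian for $n\geq 3$. So you cannot conclude $\fkG_{1^n\times w_n}(x_1,\dots,x_n)=G_{\delta_n}(x_1,\dots,x_n)$ from it; some independent argument is needed, and at that point you are essentially reproving the proposition for one particular $v$.

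Second, and more seriously, the inductive step has a genuine information gap. Your hypothesis is $\rho_n(\fkG_{1^n\times u})=\vartheta_{w_n}\fkG_u$, an identity that records only the specialization $x_{n+1}=x_{n+2}=\dots=0$. But to compute $\rho_n\bigl(\varpi_{n+i}\fkG_{1^n\times u}\bigr)$ you must, as you yourself note, extract the coefficients of $x_{n+i}$ and $x_{n+i+1}$ in $\fkG_{1^n\times u}$ (with the remaining high variables set to zero). These coefficients are invisible to $\rho_n$ and hence are not supplied by the inductive hypothesis; the $S_n$-symmetry in $x_1,\dots,x_n$ does not help either, since $\varpi_{n+i}$ acts on variables disjoint from those. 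So the proposed ``matching'' cannot be carried out with the data at hand: you would need a strictly stronger inductive statement tracking $\fkG_{1^n\times u}$ in more than $n$ variables.

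The paper avoids this difficulty by never restricting prematurely. It first proves an \emph{exact} shift identity $\tau_{n+1}\fkG_v=\fkG_{1\times v}$ in all variables, where $\tau_N=\vartheta_1\vartheta_2\cdots\vartheta_{N-1}$, via the observation that $x_1x_2\cdots x_n\fkG_v=\fkG_u$ for an explicit $u\in S_{n+1}$. Iterating gives $\fkG_{1^N\times v}=\tau_{n+N}\cdots\tau_{n+1}\fkG_v$ \emph{before} any specialization. Only then does one apply $\rho_n$, using the elementary commutation rules $\rho_n\vartheta_i=\rho_n$ for $i\geq n$ and $\rho_n\vartheta_i=\vartheta_i\rho_n$ for $i<n$, together with $(\tau_n)^n=\vartheta_{w_n}$. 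This order of operations is what makes the argument close; in your scheme, restricting first and inducting second loses exactly the information the induction would need.
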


\begin{proof}
Fix $v \in S_n$ and define $\tau_N := \vartheta_1 \vartheta_2\cdots \vartheta_{N-1}$.
We then have 
\[\tau_{n+1} \fkG_v = \varpi_1 \varpi_2 \cdots \varpi_n (x_1x_2\cdots x_n \fkG_v).\]
Let $u = (v_1+1)(v_2+2)\cdots(v_n+1)1 \in S_{n+1}$. Since 
\[
x_1x_2\cdots x_n \fkG_v = x_1x_2\cdots x_n \varpi_{v^{-1} w_n} \fkG_{w_n}
= \varpi_{v^{-1} w_n} \fkG_{w_{n+1}} = \fkG_u,
\]
it follows that  $\tau_{n+1} \fkG_v = \fkG_{1^1\times v}$ and
$\fkG_{1^N\times v} = \tau_{n+N}\cdots \tau_{n+2}\tau_{n+1}\fkG_v$ for all $N \in \PP$.
Define $r_n(f) := f(x_1,x_2,\dots,x_n)$.
Then  \eqref{vartheta-eq} implies that
\be r_n (\vartheta_i f) = \begin{cases}
r_n (f) &\text{if }n \leq i \\
\vartheta_i r_n( f ) &\text{if }i<n
\end{cases}
\ee
so $r_n( \tau_N f) = \tau_n r_N( f)$ for $n \leq N$
and $r_n(\tau_N f)  = \tau_N r_n( f)$ for $N < n$.
Since $r_n (\fkG_v) = \fkG_v$ and 
$(\tau_n)^n = \vartheta_{w_n}$,
we have $r_n(\fkG_{1^{N}\times v})  
= \vartheta_{w_n} \fkG_v$
for $N\geq n$.
\end{proof}

\begin{corollary}\label{stab-cor}
If $v \in S_\infty$ and $z \in \ISp_\infty$ then 
\[ 
G_v = \lim_{N \to \infty} \vartheta_{w_N} \fkG_v
\qquand
\GSp_z = \lim_{N\to \infty} \vartheta_{w_N} \Gfpf_z.
\]
\end{corollary}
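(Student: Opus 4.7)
The plan is to reduce the permutation case directly to Proposition~\ref{stab-prop} and then bootstrap to the involution case by expanding $\Gfpf_z$ in the basis $\{\fkG_w\}_{w\in S_\infty}$.

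For $v \in S_\infty$, I would fix any $n$ with $v \in S_n$. For each $N \geq n$, Proposition~\ref{stab-prop} applied with its role of $n$ played by $N$ gives
\[
\fkG_{1^M \times v}(x_1,\dots,x_N) = \vartheta_{w_N}\fkG_v \qquad\text{for all } M \geq N.
\]
The right side is independent of $M$, so the coefficient of any monomial in $x_1,\dots,x_N$ appearing in $\fkG_{1^M \times v}$ is constant for $M \geq N$ and therefore equals its coefficient in $G_v = \lim_{M \to \infty} \fkG_{1^M \times v}$. This identifies $\vartheta_{w_N}\fkG_v$ with the specialization of $G_v$ at $x_{N+1}=x_{N+2}=\dots=0$. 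Letting $N \to \infty$, any fixed monomial in $\ZZ[\beta][[x_1,x_2,\dots]]$ involves only finitely many variables, so its coefficient in $\vartheta_{w_N}\fkG_v$ matches that in $G_v$ once $N$ exceeds the largest variable index of the monomial. This proves $\lim_{N\to\infty}\vartheta_{w_N}\fkG_v = G_v$.

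For $z \in \ISp_\infty$, I would invoke Corollary~\ref{basis-cor} to write $\Gfpf_z = \sum_w c_w \fkG_w$ as a \emph{finite} $\ZZ[\beta]$-linear combination of Grothendieck polynomials. Lemma~\ref{stab-lem} then gives $\GSp_z = \sum_w c_w G_w$. Applying the permutation case to each summand and using the linearity of $\vartheta_{w_N}$ together with the finiteness of the sum yields
\[
\lim_{N\to\infty}\vartheta_{w_N}\Gfpf_z = \sum_w c_w \lim_{N\to\infty}\vartheta_{w_N}\fkG_w = \sum_w c_w G_w = \GSp_z.
\]

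The proof poses no serious obstacle, being essentially a direct consequence of Proposition~\ref{stab-prop} plus linearity. The only point that merits a moment's care is justifying that stabilization of finite-variable restrictions suffices for convergence in the formal power series topology, which follows from the standard observation that each monomial involves only finitely many variables; the finiteness of the expansion $\Gfpf_z = \sum_w c_w\fkG_w$ provided by Corollary~\ref{basis-cor} is likewise essential so that the limit can be pulled through the sum.
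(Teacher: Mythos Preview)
Your proof is correct and takes essentially the same approach as the paper, which simply writes ``These identities are clear from Lemma~\ref{stab-lem} and Proposition~\ref{stab-prop}.'' Your argument is a faithful elaboration of this: you unpack Proposition~\ref{stab-prop} (applied with $v\in S_N$ for growing $N$) to identify $\vartheta_{w_N}\fkG_v$ with the truncation $G_v(x_1,\dots,x_N)$, and you use Lemma~\ref{stab-lem} together with the finiteness from Corollary~\ref{basis-cor} to pass the limit through the basis expansion of $\Gfpf_z$.
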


\begin{proof}
These identities are clear from Lemma~\ref{stab-lem} and Proposition~\ref{stab-prop}.
\end{proof}

For any polynomials $x$ and $y$, let 
\be\label{oplus-def}
x\oplus y := x + y + \beta xy \qquand x\ominus y := \tfrac{x-y}{1+\beta y}
\ee
For integers $0<a \leq b$, define 
\[
 \partial_{b\searrow a} := \partial_{b-1}\partial_{b-2}\cdots \partial_a
\qquand
\varpi_{b\searrow a} := \varpi_{b-1}\varpi_{b-2}\cdots \varpi_a
\]
 so that $\partial_{a\searrow a} = \varpi_{a\searrow a} = 1$.
Finally, let $\Delta^{(\beta)}_{m,n}(x) := \prod_{j=2}^n (1+\beta x_{m+j})^{j- 1}$.

\begin{lemma}\label{tech-lem}
If $m \in \NN$ and $n \in \PP$ then 
\[\varpi_{1^m\times w_n} f 
=
\partial_{1^m\times w_n} ( \Delta^{(\beta)}_{m,n}(x) f )
= \sum_{w \in S_n} w\( \frac{f}{\prod_{1 \leq i < j \leq n} x_{m+i} \ominus x_{m+j}}\)
\]
where in the last sum $S_n$ acts by permuting the variables $x_{m+1},x_{m+2},\dots,x_{m+n}$.
\end{lemma}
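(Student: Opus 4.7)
The plan is to prove the two equalities separately. For the rightmost equality I would first observe that
\[\Delta^{(\beta)}_{m,n}(x) \;=\; \prod_{1\le i<j\le n}\tfrac{x_{m+i}-x_{m+j}}{x_{m+i}\ominus x_{m+j}},\]
since each factor $(1+\beta x_{m+j})$ appears with exponent $j-1$ in $\Delta^{(\beta)}_{m,n}$ and appears once in each denominator $x_{m+i}\ominus x_{m+j}$ for $i<j$. The rightmost equality is then immediate from the classical Jacobi-like formula $\partial_{w_0}(g) = \sum_{w\in S_n} w\bigl(g/\prod_{1\le i<j\le n}(x_{m+i}-x_{m+j})\bigr)$ (applied to the variables $x_{m+1},\dots,x_{m+n}$) after setting $g=\Delta^{(\beta)}_{m,n}(x)\,f$.

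For the leftmost equality I would reduce to $m=0$ by a shift of indices and induct on $n$, with the base case $n=1$ being trivial. I would use the reduced factorization $w_n = (s_{n-1}s_{n-2}\cdots s_1)\cdot w_{n-1}^{(+)}$, where $w_{n-1}^{(+)}$ denotes the longest element of the parabolic subgroup $S_{\{2,3,\dots,n\}}$. The inductive hypothesis applied in the $n-1$ variables $x_2,\dots,x_n$ then yields
\[\varpi_{w_n}(f) \;=\; \varpi_{n-1}\cdots\varpi_1\bigl(\partial_{w_{n-1}^{(+)}}(\Delta^{(\beta)}_{1,n-1}(x)\,f)\bigr).\]

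Two elementary ingredients finish the induction. First, a short separate induction on $k$ using the definition $\varpi_i(h) = \partial_i((1+\beta x_{i+1})h)$ and the fact that multiplication by $1+\beta x_k$ commutes with $\partial_j$ whenever $j\notin\{k-1,k\}$ shows that
\[\varpi_k\varpi_{k-1}\cdots\varpi_1(h) \;=\; \partial_k\partial_{k-1}\cdots\partial_1\Bigl(\prod_{j=2}^{k+1}(1+\beta x_j)\cdot h\Bigr)\]
for any $h\in\cL$. Second, since $\prod_{j=2}^n(1+\beta x_j)$ is symmetric in $x_2,\dots,x_n$, it commutes across $\partial_{w_{n-1}^{(+)}}$, and a direct multiplication check gives $\prod_{j=2}^n(1+\beta x_j)\cdot\Delta^{(\beta)}_{1,n-1}(x) = \Delta^{(\beta)}_{0,n}(x)$. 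Stringing these steps together identifies the displayed expression for $\varpi_{w_n}(f)$ with $\partial_{n-1}\cdots\partial_1\bigl(\partial_{w_{n-1}^{(+)}}(\Delta^{(\beta)}_{0,n}(x)\,f)\bigr) = \partial_{w_n}(\Delta^{(\beta)}_{0,n}(x)\,f)$, completing the induction.

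The main obstacle is choosing the right reduced expression of $w_n$: the alternative factorization $w_n = w_{n-1}\cdot(s_{n-1}\cdots s_1)$ looks equally natural but produces an intermediate identity that fails at the polynomial level and holds only after applying $\partial_{w_{n-1}}$. The choice $w_n = (s_{n-1}\cdots s_1)\cdot w_{n-1}^{(+)}$ is what allows one to move the symmetric factor $\prod_{j=2}^n(1+\beta x_j)$ across the inner divided-difference operator, which is the essential step.
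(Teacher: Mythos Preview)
Your proof is correct and follows essentially the same route as the paper's. For the first equality the paper uses exactly your factorization $\varpi_{1^m\times w_n} = \varpi_{(m+n)\searrow(m+1)}\varpi_{1^{m+1}\times w_{n-1}}$ together with the identity $\varpi_{b\searrow a}(h)=\partial_{b\searrow a}\bigl((1+\beta x_b)\cdots(1+\beta x_{a+1})\,h\bigr)$, which is your first ``ingredient''; for the second equality the paper simply cites the classical symmetrization formula for $\partial_{w_0}$ from Manivel, whereas you spell out the reduction via the identity $\Delta^{(\beta)}_{m,n}(x)=\prod_{i<j}(x_{m+i}-x_{m+j})/(x_{m+i}\ominus x_{m+j})$.
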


\begin{proof}
The second equality is \cite[Proposition 2.3.2]{Manivel}.
The first equality follows by induction: the base case when $n=1$ holds by definition, and if $n>1$ then 
 $\varpi_{1^m\times w_n} = \varpi_{(m+n) \searrow (m+1)}\varpi_{1^{m+1}\times w_{n-1}}$
and the desired identity is easy to deduce using the fact that
$\varpi_{b\searrow a} f = \partial_{b\searrow a}((1+\beta x_{b})\cdots (1+\beta x_{a+2})(1+\beta x_{a+1}) f)$.
\end{proof}

For any integer sequence $\lambda=(\lambda_1,\lambda_2,\dots)$ with finitely many
nonzero terms, define $x^\lambda := x_1^{\lambda_1}x_2^{\lambda_2}\cdots$.
Let $\delta_n :=(n-1,n-2,\dots,2,1,0)$
for $n \in \PP$. 

\begin{lemma}\label{vartheta-lem}
If $n \in \PP$ then $\vartheta_{w_n} f = \varpi_{w_n}(x^{\delta_n} f)$ for all $f \in \cL$.
\end{lemma}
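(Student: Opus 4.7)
The plan is to induct on $n$. The case $n=1$ is immediate since $w_1$ is the identity and $x^{\delta_1}=1$. For the inductive step, I would use the reduced factorization
\[ w_n = (s_1 s_2 \cdots s_{n-1}) \cdot (w_{n-1} \times 1), \]
which is easily checked to be length-additive: the right factor reverses $\{1,\dots,n-1\}$ and fixes $n$, while the left factor is the cycle $i \mapsto i+1$, $n \mapsto 1$, and their product sends $i \mapsto n+1-i$. This gives
\[ \vartheta_{w_n} = \vartheta_1\vartheta_2\cdots\vartheta_{n-1} \cdot \vartheta_{w_{n-1}}
\qquad\text{and}\qquad
\varpi_{w_n} = \varpi_1\varpi_2\cdots\varpi_{n-1} \cdot \varpi_{w_{n-1}}. \]

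The key intermediate step is the identity
\[ \vartheta_1\vartheta_2\cdots\vartheta_{n-1}\, g \;=\; \varpi_1\varpi_2\cdots\varpi_{n-1}(x_1 x_2 \cdots x_{n-1}\, g) \qquad \text{for all } g \in \cL, \]
which I would prove by a descending induction on the leftmost index. Since $\vartheta_i g = \varpi_i(x_i g)$ and $s_j(x_i) = x_i$ whenever $j > i$, one can apply \eqref{iwc-eq} to slide each $x_i$ past the $\varpi_j$'s to its right, collecting all of $x_1 x_2 \cdots x_{n-1}$ inside the innermost $\varpi$.

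Armed with this intermediate identity and the inductive hypothesis $\vartheta_{w_{n-1}}f = \varpi_{w_{n-1}}(x^{\delta_{n-1}}f)$, the computation becomes
\[
\vartheta_{w_n}f \;=\; \vartheta_1\cdots\vartheta_{n-1}\bigl(\varpi_{w_{n-1}}(x^{\delta_{n-1}}f)\bigr)
\;=\; \varpi_1\cdots\varpi_{n-1}\bigl(x_1 x_2 \cdots x_{n-1}\cdot \varpi_{w_{n-1}}(x^{\delta_{n-1}}f)\bigr).
\]
The final observation is that $x_1 x_2 \cdots x_{n-1}$ is symmetric in $x_1,\dots,x_{n-1}$, hence fixed by each $s_i$ with $i \leq n-2$; by \eqref{iwc-eq} it therefore commutes past $\varpi_{w_{n-1}}$, yielding
\[ \vartheta_{w_n}f \;=\; \varpi_{w_n}\bigl(x_1 x_2 \cdots x_{n-1}\cdot x^{\delta_{n-1}}f\bigr) \;=\; \varpi_{w_n}(x^{\delta_n}f), \]
since $x_1 \cdots x_{n-1}\cdot x^{\delta_{n-1}} = x_1^{n-1}x_2^{n-2}\cdots x_{n-1} = x^{\delta_n}$.

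I do not expect any real obstacle: the argument only relies on $\vartheta_i = \varpi_i \circ x_i$, the invariance criterion \eqref{iwc-eq}, and the choice of a reduced decomposition that makes a symmetric monomial available at the appropriate step. The only point requiring any care is verifying that the factorization $w_n = (s_1\cdots s_{n-1})(w_{n-1}\times 1)$ is reduced, which is routine.
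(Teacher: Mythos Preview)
Your argument is correct and follows essentially the same strategy as the paper's proof: induction on $n$ using a reduced factorization of $w_n$ together with the commutation rule \eqref{iwc-eq} to slide symmetric monomials past $\varpi$-operators. The only cosmetic difference is the choice of reduced word---the paper peels off the factor $s_{n-1}\cdots s_2 s_1$ on the right (proving $\varpi_{n-1}\cdots\varpi_1(x^{\delta_n}f)=x^{\delta_{n-1}}\vartheta_{n-1}\cdots\vartheta_1 f$ and then applying $\varpi_{w_{n-1}}$), whereas you peel off $s_1 s_2\cdots s_{n-1}$ on the left; both routes are equally short.
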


\begin{proof}
The expression $w_n =(s_1)(s_2s_1)(s_3s_2s_1)\cdots(s_{n-1} \cdots s_3s_2s_1)$
is reduced
and one can check, noting that $\varpi_i(x_1x_2\cdots x_n f) = x_1x_2\cdots x_n \cdot\varpi_i f$ for $i<n$, that
$\varpi_{n-1}\cdots  \varpi_2 \varpi_1(x^{\delta_n} f) = x^{\delta_{n-1}}  \vartheta_{n-1}\cdots \vartheta_2 \vartheta_1 f
$. The lemma follows by induction from these identities.
\end{proof}


\begin{corollary} \label{g-prop}
If $\lambda$ is a partition then
$
G_\lambda = 
\lim_{n\to \infty} \vartheta_{w_n}( 
 x^{\lambda}  )
.$
\end{corollary}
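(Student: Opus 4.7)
The plan is to combine Theorem~\ref{buch-thm1} (which gives $G_\lambda = G_{w_\lambda}$) with Corollary~\ref{stab-cor} (which expresses $G_{w_\lambda}$ as $\lim_{n\to\infty}\vartheta_{w_n}\fkG_{w_\lambda}$), reducing the corollary to the pointwise equality $\vartheta_{w_n}\fkG_{w_\lambda} = \vartheta_{w_n}(x^\lambda)$ for every $n$ at least as large as the number of parts $k$ of $\lambda$, and then passing to the limit. The heart of the argument will be the intermediate identity $\fkG_{w_\lambda} = \vartheta_{w_k}(x^\lambda)$, which lets me replace the Grassmannian Grothendieck polynomial $\fkG_{w_\lambda}$ by the monomial $x^\lambda$ under a further application of isobaric divided differences.

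To establish this intermediate identity, I would first verify by direct computation that the product $w_\lambda w_k$ is a dominant permutation whose Lehmer code is $\delta_k + \lambda$ padded by zeros: one checks $(w_\lambda w_k)(i) = (k+1-i)+\lambda_i$ for $i \in [k]$ and $(w_\lambda w_k)(i) = w_\lambda(i)$ for $i>k$, after which a short bookkeeping of inversions gives $c_i(w_\lambda w_k) = (k-i)+\lambda_i$ for $i \in [k]$ and $c_i(w_\lambda w_k) = 0$ otherwise. Invoking the classical fact that Grothendieck polynomials of dominant permutations are simply their code monomials (the type~$A$ analogue of Theorem~\ref{dom-thm}; see \cite[Chapter~2]{Manivel}) then identifies $\fkG_{w_\lambda w_k} = x^{\delta_k+\lambda}$. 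Since $w_\lambda$ has its unique descent at position $k$, right-multiplying by any simple transposition $s_i$ with $i<k$ increases length, so $\ell(w_\lambda w_k) = \ell(w_\lambda) + \ell(w_k)$; iterating the recursion $\varpi_i\fkG_v = \fkG_{vs_i}$ of Theorem-Definition~\ref{groth-thmdef} therefore yields
\[
\fkG_{w_\lambda} = \varpi_{w_k}\fkG_{w_\lambda w_k} = \varpi_{w_k}(x^{\delta_k}\cdot x^\lambda),
\]
and the last expression equals $\vartheta_{w_k}(x^\lambda)$ by Lemma~\ref{vartheta-lem}.

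To finish, I would exploit that the operators $\vartheta_i$ are idempotent and satisfy the braid relations, so they furnish a representation of the $0$-Hecke algebra of $S_\infty$ in which $\vartheta_u\vartheta_v = \vartheta_{u\ast v}$ for $\ast$ the Demazure product. Because $w_n$ is the longest element of $S_n$ we have $w_n\ast v = w_n$ for every $v\in S_n$, so $\vartheta_{w_n}\vartheta_{w_k} = \vartheta_{w_n}$ whenever $n\geq k$. Combined with the intermediate identity this gives $\vartheta_{w_n}(x^\lambda) = \vartheta_{w_n}\vartheta_{w_k}(x^\lambda) = \vartheta_{w_n}\fkG_{w_\lambda}$ for $n\geq k$, and passing to the limit produces $\lim_n \vartheta_{w_n}(x^\lambda) = G_{w_\lambda} = G_\lambda$. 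I expect the main obstacle to be the intermediate identity, and within it the appeal to the monomial formula for $\fkG_u$ at dominant $u$; this fact is standard but is not explicitly recorded in the excerpt, so either a short induction on $|\lambda|$ using the recursion of Theorem-Definition~\ref{groth-thmdef} or a clean citation to \cite{Manivel} would be required to close the gap.
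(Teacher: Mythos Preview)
Your argument is correct and takes a genuinely different route from the paper. The paper's one-line proof simply feeds an external symmetrization formula for $G_\lambda(x_1,\dots,x_n)$ from \cite[Eq.~(2.14)]{IkedaNaruse} through Lemmas~\ref{tech-lem} and~\ref{vartheta-lem}, which rewrites that formula as $\vartheta_{w_n}(x^\lambda)$, and then lets $n\to\infty$. You instead stay inside the paper's own framework: you pass through $G_\lambda = G_{w_\lambda}$ (Theorem~\ref{buch-thm1}) and Corollary~\ref{stab-cor}, prove the intermediate identity $\fkG_{w_\lambda} = \vartheta_{w_k}(x^\lambda)$ by recognizing $w_\lambda w_k$ as dominant with code $\delta_k+\lambda$, and close with the $0$-Hecke absorption $\vartheta_{w_n}\vartheta_{w_k}=\vartheta_{w_n}$. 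What your approach buys is independence from the Ikeda--Naruse formula, at the cost of importing the monomial formula $\fkG_u = x^{c(u)}$ for dominant $u$; as you correctly flag, this is standard but is not in the excerpt, and \cite{Manivel} only records the Schubert-polynomial version, so you would need a different citation or the short downward induction on length you outline. The paper's approach, by contrast, is terser and uses only Lemmas~\ref{tech-lem} and~\ref{vartheta-lem} from within, but leans entirely on the cited determinantal formula.
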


\begin{proof}
Apply Lemmas~\ref{tech-lem} and \ref{vartheta-lem} to \cite[Eq.\ (2.14)]{IkedaNaruse}, for example.
\end{proof}

\subsection{$K$-theoretic Schur $P$-functions}

The natural symplectic analogues of Theorems~\ref{buch-thm1}
and \ref{buch-et-al-thm} involve shifted versions of the symmetric functions $G_\lambda$,
which we review here.

Define the \emph{marked alphabet} to be the totally ordered set 
of primed and unprimed integers $\MM := \{1'<1<2'<2<\dots\}$,
and write
$|i'|  := |i| = i$ for $i \in \PP$.
If $\lambda =(\lambda_1>\lambda_2>\dots>\lambda_k>0)$ is a strict partition, then a \emph{shifted set-valued tableau} of shape $\lambda$
is a map $T : (i,j) \mapsto T_{ij}$ from the shifted diagram
\[\SS_\lambda := \{(i,i+j-1) \in \PP\times \PP : 1\leq j \leq \lambda_i\}\]
to the set of finite, nonempty subsets of  $\MM $.
Given such a map $T$, define
\[x^T := \prod_{(i,j) \in \SS_\lambda} \prod_{k \in T_{ij}} x_{|k|}
\qquand
 |T| := \sum_{(i,j) \in \SS_\lambda} |T_{ij}|.\]
A shifted set-valued tableau $T$ is \emph{semistandard} if 
for all relevant $(i,j) \in \SS_\lambda$:
\begin{itemize}
\item[(a)] $\max(T_{ij}) \leq \min(T_{i,j+1})$ and $T_{ij} \cap T_{i,j+1} \subset \{1,2,3,\dots\}$.
\item[(b)] $\max(T_{ij}) \leq \min(T_{i+1,j})$ and $T_{ij} \cap T_{i+1,j} \subset \{1',2',3',\dots\}$.
\end{itemize}
In such tableaux, an unprimed number can appear at most once in a column, while a primed number 
can appear at most one in a row.
Let $\SetSSMT(\lambda)$ denote the set of semistandard shifted set-valued tableaux of shape $\lambda$.

\begin{definition}[\cite{IkedaNaruse}]
The \emph{$K$-theoretic Schur $P$-function}
of a strict partition $\lambda$ is the power series
$
\GP_\lambda := \sum_{
T
} \beta^{|T|-|\lambda|} x^T
$
where the summation is over tableaux  $T \in \SetSSMT(\lambda)$ with no primed numbers in any position on the main diagonal.
\end{definition}

This definition is due to Ikeda and Naruse \cite{IkedaNaruse}, who
also show that each
$\GP_\lambda$ is symmetric in the $x_i$ variables
\cite[Theorem 9.1]{IkedaNaruse}.
Setting $\beta=0$ transforms $\GP_\lambda$  
to the classical \emph{Schur $P$-function} $P_\lambda$.

 

\begin{proposition} \label{gp-prop}
If $\lambda$ is a strict partition with $r$ parts then 
\[ \ba
\GP_\lambda& = 
\lim_{n\to \infty} \vartheta_{w_n}\( 
 x^{\lambda} \prod_{i=1}^r \prod_{j=i+1}^n\frac{x_i\oplus x_j}{x_i} \)
 \ea
 \]
 where we set $x\oplus y := x + y + \beta xy$ as in \eqref{oplus-def}.
\end{proposition}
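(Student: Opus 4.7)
The plan is to unwind $\vartheta_{w_n}$ using Lemmas~\ref{tech-lem} and \ref{vartheta-lem} into a symmetrization sum over $S_n$, factor out a piece that is symmetric in $x_{r+1},\dots,x_n$, and recognize what remains as a standard Weyl-type formula for $\GP_\lambda$. Throughout, abbreviate $f_n := x^\lambda \prod_{i=1}^r \prod_{j=i+1}^n \frac{x_i\oplus x_j}{x_i}$.

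First I would apply Lemma~\ref{vartheta-lem} to write $\vartheta_{w_n}(f_n) = \varpi_{w_n}(x^{\delta_n} f_n)$ and then Lemma~\ref{tech-lem} with $m=0$ to get
\[
\vartheta_{w_n}(f_n) = \sum_{w \in S_n} w\!\left(\frac{x^{\delta_n} f_n}{\prod_{1\leq i<j\leq n}(x_i \ominus x_j)}\right).
\]
Since $x^{\delta_n}/\prod_{i=1}^r x_i^{n-i} = \prod_{r<i<j\leq n} x_i$ (the $\delta_{n-r}$ staircase in the variables $x_{r+1},\dots,x_n$), and since the denominator splits as $\prod_{i=1}^r \prod_{j=i+1}^n(x_i\ominus x_j) \cdot \prod_{r<i<j\leq n}(x_i\ominus x_j)$, the summand rewrites as
\[
x^\lambda \prod_{i=1}^r \prod_{j=i+1}^n \frac{x_i\oplus x_j}{x_i\ominus x_j} \cdot \frac{\prod_{r<i<j\leq n} x_i}{\prod_{r<i<j\leq n}(x_i\ominus x_j)}.
\]

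Next I would observe that the first two factors are symmetric in $x_{r+1},\dots,x_n$: $x^\lambda$ uses only $x_1,\dots,x_r$, and for each fixed $i\leq r$ the product $\prod_{j=i+1}^n \frac{x_i\oplus x_j}{x_i\ominus x_j}$ is symmetric in $\{x_{r+1},\dots,x_n\}$. Decomposing $S_n = \bigsqcup_u u S_{n-r}$ where $S_{n-r}$ permutes $x_{r+1},\dots,x_n$, the symmetric factor pulls out of the inner sum, and the remaining piece
\[
\sum_{v \in S_{n-r}} v\!\left(\frac{\prod_{r<i<j\leq n} x_i}{\prod_{r<i<j\leq n}(x_i\ominus x_j)}\right)
\]
equals $\vartheta_{1^r\times w_{n-r}}(1)$ by re-indexed applications of Lemmas~\ref{tech-lem} and \ref{vartheta-lem}; since each $\vartheta_i$ fixes $1$, this sum is $1$. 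Thus
\[
\vartheta_{w_n}(f_n) = \sum_{u \in S_n/S_{n-r}} u\!\left(x^\lambda \prod_{i=1}^r \prod_{j=i+1}^n \frac{x_i\oplus x_j}{x_i\ominus x_j}\right).
\]

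Finally, I would identify the right-hand side with $\GP_\lambda(x_1,\dots,x_n)$ via the standard Ikeda--Naruse symmetrization formula for $K$-theoretic Schur $P$-polynomials \cite{IkedaNaruse}, then let $n\to\infty$. The main obstacle is justifying this last identification: one must invoke (or re-derive) the correct Weyl-type expression from \cite{IkedaNaruse}, being careful with the $\oplus$/$\ominus$ versus $\pm 1$ normalization conventions. A secondary bookkeeping point is the clean factorization of the $\delta_n$ staircase that makes the inner $S_{n-r}$-average collapse to $1$.
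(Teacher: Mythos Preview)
Your argument is correct and is essentially the paper's own proof run in reverse: the paper starts from Ikeda--Naruse's symmetrization formula for $\GP_\lambda$, multiplies by the identity $1 = \sum_{v\in S_{n-r}} v\bigl(\prod_{r<i} x_i^{n-i}/\prod_{r<i<j\leq n}(x_i\ominus x_j)\bigr)$ (equivalently $\varpi_{1^r\times w_{n-r}}\fkG_{1^r\times w_{n-r}}=1$), and then recognizes the resulting full $S_n$-sum as $\vartheta_{w_n}(f_n)$ via Lemmas~\ref{tech-lem} and \ref{vartheta-lem}, whereas you unwind $\vartheta_{w_n}(f_n)$ first and collapse the inner $S_{n-r}$-sum to $1$ at the end. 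The ``main obstacle'' you flag is exactly what the paper invokes as \cite[Definition~2.1]{IkedaNaruse}, so there is nothing additional to prove there.
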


\begin{proof}
As in \eqref{oplus-def}, set $x\ominus y := \frac{x-y}{1+\beta y}$.
Fix a strict partition $\lambda$ with $r$ parts.
Ikeda and Naruse's first definition of $\GP_\lambda$ (see \cite[Definition 2.1]{IkedaNaruse}) is
\be 
\GP_\lambda = 
\lim_{n\to \infty} \frac{1}{(n-r)!} \sum_{w \in S_n} w\( 
x^\lambda \prod_{i=1}^r \prod_{j=i+1}^n \frac{x_i\oplus x_j}{x_i\ominus x_j}
\).
\ee
We can rewrite this  as
\be\label{rewrite-gp}
\GP_\lambda = 
\lim_{n\to \infty} \sum_{w \in S_n/S_{n-r}} w\( 
x^\lambda \prod_{i=1}^r \prod_{j=i+1}^n \frac{x_i\oplus x_j}{x_i\ominus x_j}
\)
\ee
where $S_{n-r}$ acts on the variables $x_{r+1},x_{r+2},\dots,x_n$.
Lemma~\ref{tech-lem} implies that 
$ 1 =
\sum_{w \in S_{n-r}} w\( \frac{\prod_{i=r+1}^n x_{i}^{n-i}}{\prod_{r+1 \leq i <j \leq n} x_{i} \ominus x_{j}}\)
$
since the left side is $\varpi_{1^r \times w_{n-r}} \fkG_{1^r\times w_{n-r}} = \fkG_{1}$.
Multiplying the right side of \eqref{rewrite-gp} by this expression gives
\[ 
\GP_\lambda = 
\lim_{n\to \infty} \sum_{w \in S_n} w\( 
\frac{x^{\delta_n}}{\prod_{1\leq i < j \leq n} x_i \ominus x_j} \cdot x^\lambda \prod_{i=1}^r \prod_{j=i+1}^n \frac{x_i\oplus x_j}{x_i}  \)
\]
which is equivalent to
the desired formula
 by Lemmas~\ref{tech-lem} and \ref{vartheta-lem}.
\end{proof}

\subsection{Grassmannian formulas}

 We are ready to state the main new results of this section.
Fix $z \in \ISp_\infty$.
The \emph{symplectic code} of $z$ is the sequence of integers
\[
\csp(z) = (c_1,c_2,\dots),
\quad\text{where $c_i:= |\{ j \in \PP : z(i) > z(j) < i < j\}|$}.\]
The \emph{symplectic shape} $\shSp(z)$ of $z $ is the transpose of the partition 
sorting $\csp(z)$. 
For example,
if $n \in 2\PP$ and $z = n\cdots 321 \in \ISp_\infty$
then
\[\ba
\DSp(z) &= \{ (i,j)  \in \PP\times \PP : j < i \leq n-j\},\\
\csp(z) &=(0,1,2,\dots, \tfrac{n}{2}-1, \tfrac{n}{2}-1,  \dots, 2, 1, 0,0,\dots),\\
\shSp(z) &= (n-2,n-4,n-6,\dots,2).
\ea\] 
Define $y \in I_\infty $
to be the involution with 
\[ y(i) = \begin{cases}
i&\text{if $z(e)>\max\{i,z(i)\}$ for all  $\min\{i,z(i)\} < e < \max\{i,z(i)\}$} \\
z(i) &\text{otherwise}.
\end{cases}
\]
This means that $y(i) = i$ if $z(i) = i \pm 1$.
In the sequel, we set $\dearc(z)= y.$

The operation $\dearc$ is easy to understand in terms of the arc diagram 
$\{ \{i,z(i)\}: i \in \PP\}$ of $z \in \ISp_\infty$. The arc diagram of $\dearc(z)$ is formed from that of $z$ by deleting each edge $\{i<j\}$
with $e<z(e)$ for all $i<e<j$.

Recall that $i$ is a visible descent of $z \in \ISp_\infty$ if $z(i+1) < \min\{i,z(i)\}$

\begin{definition}[{\cite[\S4]{HMP5}}]
An element $z\in \ISp_\infty$ is \emph{FPF-Grassmannian} if 
\[\dearc(z) = (\phi_1,n+1)(\phi_2,n+2)\cdots(\phi_r,n+r)\]
for a sequence of integers $1\leq \phi_1 < \phi_2 < \dots < \phi_r \leq n$.
In this case, one has \[\shSp(z) = (n-\phi_1,n-\phi_2,\dots,n-\phi_r)\] 
by \cite[Lemma 4.16]{HMP5}, and $n$ is the last visible descent
of $z$. 
\end{definition}

We allow $r=0$ in this definition; this corresponds to the FPF-Grassmannian involution $\wfpf \in \ISp_\infty$ with $\dearc(\wfpf)=1$.
For a given strict partition $\lambda$ with $r <n$ parts,
there is exactly one FPF-Grassmannian involution $z \in \ISp_\infty$ with shape $\shSp(z) =\lambda$
and last visible descent $n$.

\begin{example}
The involution $z = 47816523 = (1,4)(2,7)(3,8)(5,6)  \in \ISp_\infty$
is FPF-Grassmannian with $\dearc(z) = (2,7)(3,8)$ and $\shSp(z) = (4,3)$.
\end{example}

Define
$\vartheta_{b\searrow a} := \vartheta_{b-1}\vartheta_{b-2}\cdots \vartheta_a$
for  $0< a \leq b$,
with $\vartheta_i$ given by \eqref{vartheta-eq}.

\begin{proposition}\label{searrow-prop}
Suppose $z \in \ISp_\infty-\{\wfpf\}$ is FPF-Grassmannian with last visible descent $n$
and shape $\shSp(z) = (n-\phi_1,n-\phi_2,\dots,n-\phi_r)$,
so that 
\[\dearc(z) = (\phi_1,n+1)(\phi_2,n+2)\cdots(\phi_r,n+r)\]
for some integers $1\leq \phi_1 <\phi_2  < \dots < \phi_r \leq n$.
Then 
\[ \Gfpf_z = \vartheta_{\phi_1\searrow 1}\vartheta_{\phi_2\searrow 2}\cdots \vartheta_{\phi_r\searrow r}\(  x^{\shSp(z)} \prod_{i=1}^r \prod_{j=i+1}^n \frac{x_i\oplus x_j}{x_i} \)
\]
where $x_i \oplus x_j := x_i + x_j + \beta x_i x_j$.
\end{proposition}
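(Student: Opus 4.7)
The plan is to prove the formula by induction on the nonnegative integer $d(z) := \sum_{i=1}^{r}(\phi_i - i)$, which measures how far the FPF-Grassmannian involution $z$ is from the ``minimal'' FPF-Grassmannian $z_\star$ specified by $\phi_i = i$ for all $i \in [r]$ (with the same $r$ and $n$).

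For the base case $d(z) = 0$, I would verify directly from the definitions that $z_\star$ is $\Sp$-dominant with
\[
\DSp(z_\star) = \{(i,j) \in \PP \times \PP : 1 \leq j \leq r,\ j < i \leq n\},
\]
a staircase diagram for the strict partition $(n-1, n-2, \dots, n-r)$. Theorem~\ref{dom-thm} then gives $\Gfpf_{z_\star} = \prod_{i=1}^{r}\prod_{j=i+1}^{n}(x_i \oplus x_j)$, which matches the RHS of the proposition because each operator $\vartheta_{i \searrow i}$ is trivial and the factor $\prod_{i=1}^{r} x_i^{n-i}$ coming from $x^{\shSp(z_\star)}$ exactly cancels the denominators of $\prod_{i,j}\frac{x_i \oplus x_j}{x_i}$.

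For the inductive step, given $z$ with $d(z) > 0$, let $k$ be the smallest index with $\phi_k > k$, so that $\phi_1 = 1, \dots, \phi_{k-1} = k-1$, and let $z'$ be the FPF-Grassmannian obtained from $z$ by replacing $\phi_k$ with $\phi_k - 1$. Since $\phi_{k-1} = k-1 < \phi_k - 1$ and $\phi_{k+1} > \phi_k$, the new parameter tuple for $z'$ is still strictly increasing, and $d(z') = d(z) - 1$. A direct analysis of the small-arc structures of $z$ and $z'$ shows that $z = s_{\phi_k - 1}\, z'\, s_{\phi_k - 1}$ and that the descent condition $\phi_k \neq z'(\phi_k - 1) > z'(\phi_k) \neq \phi_k - 1$ of Theorem-Definition~\ref{sp-thm1} holds, giving $\varpi_{\phi_k - 1}\, \Gfpf_{z'} = \Gfpf_z$. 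The induction hypothesis $\Gfpf_{z'} = f_{z'}$, where $f_{z'}$ denotes the RHS of the proposition applied to $z'$, thus reduces the claim to the purely algebraic operator identity $\varpi_{\phi_k - 1}\, f_{z'} = f_z$.

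To establish this identity, I would use the decomposition $\vartheta_{\phi_k \searrow k} = \vartheta_{\phi_k - 1}\,\vartheta_{(\phi_k - 1)\searrow k}$ and the observation $x^{\shSp(z')} = x_k\cdot x^{\shSp(z)}$. The minimality of $k$ means the outer operators $\vartheta_{\phi_i\searrow i}$ for $i < k$ reduce to identities, so the comparison of $\varpi_{\phi_k - 1}\, f_{z'}$ with $f_z$ reduces to a local identity near the factor $\vartheta_{(\phi_k - 1)\searrow k}$, which is then checked via the elementary relations $\vartheta_i(x_i f) = (x_i \oplus x_{i+1})\, f$ (valid for $s_i$-symmetric $f$) and $\varpi_i(x_k \oplus x_{k+1} \oplus \dots \oplus x_i) = 1$. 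The main obstacle will be carefully tracking the interaction of $\varpi_{\phi_k - 1}$ with the nested inner operators $\vartheta_{\phi_{k+1}\searrow (k+1)}\cdots\vartheta_{\phi_r\searrow r}$: since some of their indices can coincide with $\phi_k - 1$ or $\phi_k$, these operators do not obviously commute with $\varpi_{\phi_k - 1}$, and one must use the braid relations together with the idempotence $\vartheta_j\vartheta_j = \vartheta_j$ to migrate $\varpi_{\phi_k - 1}$ into the right position and verify that the resulting boundary corrections cancel.
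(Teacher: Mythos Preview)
Your induction on $d(z)=\sum_i(\phi_i-i)$ is the right skeleton and in fact parallels the paper's argument: the base case $\phi_i=i$ is exactly the $\Sp$-dominant involution $w$ that the paper identifies, and the claim $\varpi_{\phi_k-1}\Gfpf_{z'}=\Gfpf_z$ is what the paper invokes at the very end (``it is straightforward from Theorem-Definition~\ref{sp-thm1}''). The difference is entirely in how one proves the algebraic step $\varpi_{\phi_k-1}f_{z'}=f_z$ on the right-hand side, and here your proposal has a real gap.

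You plan to ``migrate $\varpi_{\phi_k-1}$ past the inner operators $\vartheta_{\phi_{k+1}\searrow(k+1)}\cdots\vartheta_{\phi_r\searrow r}$ using braid relations and idempotence,'' but there are no useful mixed braid relations between $\varpi_i$ and $\vartheta_j$: the two families live in different Demazure-type algebras, and $\vartheta_i = x_{i+1}(\varpi_i+\beta)+1$ does not interact cleanly with adjacent $\varpi_j$'s. Your two local identities are correct but do not suffice; for instance, $H:=\vartheta_{\phi_{k+1}\searrow(k+1)}\cdots(g)$ is \emph{not} $s_k$-symmetric, so you cannot apply $\vartheta_k(x_k H)=(x_k\oplus x_{k+1})H$ at the first step. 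What actually makes the inductive step go through is a symmetry you do not mention: one shows (as the paper does) that $H=\theta_{k+1}$ is symmetric in $x_{k+1},\dots,x_{\phi_{k+1}}$, hence in particular in $x_{k+1},\dots,x_{\phi_k}$. Given this, Lemma~\ref{yyy-lem} converts both sides to $\varpi$-expressions,
\[
\vartheta_{(\phi_k-1)\searrow k}(x_kH)=\varpi_{(\phi_k-1)\searrow k}\bigl(x_k^{\phi_k-k}H\bigr),
\qquad
\vartheta_{\phi_k\searrow k}(H)=\varpi_{\phi_k\searrow k}\bigl(x_k^{\phi_k-k}H\bigr),
\]
after which $\varpi_{\phi_k-1}f_{z'}=f_z$ is immediate. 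The paper packages exactly this: it proves the symmetry of each $\theta_j$ and then applies Lemma~\ref{yyy-lem} once to turn the whole $\vartheta$-product into a $\varpi$-product, so that no commutation problem ever arises. If you insert that symmetry claim and Lemma~\ref{yyy-lem} into your inductive step, your argument becomes correct and essentially coincides with the paper's.
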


We need two lemmas to prove this proposition.

\begin{lemma}\label{eee-lem}
If $a\leq b$ then $\varpi_{b\searrow a}(x_a^e) = (-\beta)^{b-a-e}$ for $e \in \{0,1,2,\dots,b-a\}$.
\end{lemma}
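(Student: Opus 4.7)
\noindent\textbf{Proof plan for Lemma~\ref{eee-lem}.}
The plan is to induct on $m := b - a$. The base case $m = 0$ is trivial since $\varpi_{a\searrow a}$ is the identity operator and $(-\beta)^0 = 1$, so I turn immediately to the inductive step. Assume the claim for all pairs with $b - a = m - 1$ and consider $b = a + m$. The key observation is the factorization
\[
\varpi_{b\searrow a} \;=\; \varpi_{b\searrow a+1}\,\varpi_a,
\]
together with the fact that $\varpi_{b\searrow a+1}$ only involves the variables $x_{a+1},\dots,x_b$, so by \eqref{iwc-eq} it commutes with multiplication by any power of $x_a$.

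The case $e = 0$ is quick: $\varpi_a(1) = -\beta$, and by the induction hypothesis applied to $\varpi_{b\searrow a+1}$ with exponent $0$, we get $\varpi_{b\searrow a}(1) = -\beta \cdot (-\beta)^{m-1} = (-\beta)^m$. For $e \geq 1$, I would first compute
\[
\varpi_a(x_a^e) \;=\; -\beta\, x_a^e + (1 + \beta x_a)\sum_{j=0}^{e-1} x_a^{e-1-j} x_{a+1}^{j}
\]
directly from $\varpi_a f = -\beta f + (1+\beta x_a)\partial_a f$, using $\partial_a(x_a^e) = \sum_{j=0}^{e-1} x_a^{e-1-j}x_{a+1}^j$. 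Then apply $\varpi_{b\searrow a+1}$ term by term: each $x_a$-factor passes through, and the induction hypothesis, applied to $\varpi_{b\searrow a+1}(x_{a+1}^j)$ with $j \in \{0,1,\dots,e-1\}$, gives $(-\beta)^{m-1-j}$. Note that the hypothesis that $e \leq m$ is exactly what keeps $j \leq e - 1 \leq m - 1$, so every invocation of the induction hypothesis is within its stated range.

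After applying $\varpi_{b\searrow a+1}$, I would arrive at
\[
\varpi_{b\searrow a}(x_a^e) \;=\; (-\beta)^{m} x_a^{e} + (1+\beta x_a)\sum_{j=0}^{e-1} x_a^{e-1-j}(-\beta)^{m-1-j}.
\]
The final step is to show the right-hand side collapses to $(-\beta)^{m-e}$. Writing $\lambda := -\beta$, the factor $(1+\beta x_a) = (1 - \lambda x_a)$ turns the sum into a telescoping expression: the generic term $(1 - \lambda x_a)\cdot x_a^{e-1-j}\lambda^{m-1-j}$ equals $x_a^{e-1-j}\lambda^{m-1-j} - x_a^{e-j}\lambda^{m-j}$, and summing over $j = 0,1,\dots,e-1$ leaves only the endpoint terms $\lambda^{m-e} - x_a^e \lambda^m$. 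Adding back $\lambda^m x_a^e$ yields $\lambda^{m-e} = (-\beta)^{m-e}$, completing the induction.

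I expect no serious obstacle; the telescoping identity is a one-line algebraic verification, and the only subtlety is keeping the index range of the induction hypothesis straight (which is why the hypothesis $e \leq b - a$ is precisely what is needed). A cleaner but less self-contained alternative would be to recognize $x_a^{b-a}$ as the Grothendieck polynomial of the Grassmannian permutation $w = 1\,2\,\cdots\,(a-1)\,(a+m)\,a\,(a+1)\,\cdots\,(a+m-1)\,\cdots$ and then apply the Hecke recursion from Theorem-Definition~\ref{groth-thmdef} directly, but the direct computation above avoids that digression.
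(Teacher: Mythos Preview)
Your argument is correct and is essentially the same as the paper's: both induct on $m=b-a$, factor $\varpi_{b\searrow a}=\varpi_{b\searrow a+1}\varpi_a$, push the $x_a$-factors through $\varpi_{b\searrow a+1}$ via \eqref{iwc-eq}, apply the induction hypothesis to $\varpi_{b\searrow a+1}(x_{a+1}^j)$, and finish with a geometric/telescoping simplification. The only organizational difference is that the paper first reduces to the single case $e=b-a$ (using $\varpi_i(1)=-\beta$ to handle smaller $e$ afterward), whereas you treat all $e$ simultaneously in the inductive step; the resulting algebra is the same identity up to a global factor of $(-\beta)^{m-e}$.
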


\begin{proof}
Since $\varpi_i(1) = -\beta$,
it is enough to check that $\varpi_{b\searrow a}(x_a^{b-a})=1$.
As
\[\varpi_{a}(x_a^{b-a}) =-\beta x_a^{b-a} + (1+\beta x_a) \partial_a (x_a^{b-a})\]
we have 
$
\varpi_{b\searrow a}(x_a^{b-a}) 
= (-\beta x_a)^{b-a} + (1+\beta x_a) \varpi_{b\searrow (a+1)}(\partial_a x_a^{b-a}).
$
By induction
\[\varpi_{b\searrow (a+1)}(\partial_a x_a^{b-a})
=
\varpi_{b\searrow (a+1)}\(\sum_{i=0}^{b-a-1} x_a^i x_{a+1}^{b-a-1-i}\)
=\sum_{i=0}^{b-a-1} (-\beta x_a)^i 
\]
so the lemma follows.
\end{proof}

 \begin{lemma}\label{yyy-lem}
 If $a\leq b$ and $s_i f = f$ for $a<i<b$, then 
 $\vartheta_{b\searrow a} (f) = \varpi_{b\searrow a}(x_a^{b-a} f) $.
 \end{lemma}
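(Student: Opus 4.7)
The plan is to proceed by induction on $n := b - a$, decomposing $\vartheta_{b\searrow a} = \vartheta_{b\searrow (a+1)} \vartheta_a$ on the left side and $\varpi_{b\searrow a} = \varpi_{b\searrow (a+1)} \varpi_a$ on the right. The base cases $n = 0$ (both sides equal $f$) and $n = 1$ (the identity $\vartheta_a f = \varpi_a(x_a f)$ from \eqref{vartheta-eq}) need no hypothesis on $f$.

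For the inductive step, rewrite $\vartheta_{b\searrow a}(f) = \vartheta_{b\searrow (a+1)}(\varpi_a(x_a f))$. Since $\varpi_a$ affects only the variables $x_a$ and $x_{a+1}$, the polynomial $\varpi_a(x_a f)$ inherits the $s_i$-invariance of $f$ for $a + 1 < i < b$, so the inductive hypothesis applied to the shifted interval $(a+1, b)$ yields $\vartheta_{b\searrow a}(f) = \varpi_{b\searrow (a+1)}\bigl(x_{a+1}^{n-1} \varpi_a(x_a f)\bigr)$. Comparing with $\varpi_{b\searrow a}(x_a^n f) = \varpi_{b\searrow (a+1)}\bigl(\varpi_a(x_a^n f)\bigr)$, the desired identity reduces to showing
\[
\varpi_{b\searrow (a+1)}\bigl(\varpi_a(x_a^n f) - x_{a+1}^{n-1} \varpi_a(x_a f)\bigr) = 0.
\]
A direct calculation via the Leibniz rule \eqref{leib-eq} and the value $\varpi_a(x_a) = 1$ collapses the bracketed difference to the clean form $(1 + \beta x_{a+1})\, x_a\, \partial_a(x_a^{n-1}) \cdot f$.

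The main step — and the place I expect the most bookkeeping — is proving that $\varpi_{b\searrow (a+1)}$ annihilates this expression. I would apply $\varpi_{a+1}, \varpi_{a+2}, \ldots, \varpi_{b-1}$ in turn. At step $j$, writing $\varpi_j(g) = \partial_j((1 + \beta x_{j+1}) g)$ pairs the fresh factor $(1 + \beta x_{j+1})$ with the already-accumulated $(1 + \beta x_j)$ into an $s_j$-symmetric factor; combined with $x_a$ and the hypothesized symmetry of $f$ in $x_{a+1}, \ldots, x_b$, this lets one pull everything except the iterated divided difference of $x_a^{n-1}$ outside of $\partial_j$ via the Leibniz rule for $\partial_j$. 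Iterating gives
\[
\varpi_{b\searrow (a+1)}\bigl((1 + \beta x_{a+1})\, x_a\, \partial_a(x_a^{n-1})\, f\bigr) = \prod_{j = a+1}^{b} (1 + \beta x_j) \cdot x_a f \cdot \partial_{b-1}\partial_{b-2}\cdots\partial_a(x_a^{n-1}).
\]
The proof then closes by a degree count: the composite $\partial_{b-1}\cdots\partial_a$ drops degree by $b - a = n$, while $x_a^{n-1}$ has degree only $n - 1$, so the trailing factor vanishes identically and the induction goes through.
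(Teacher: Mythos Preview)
Your proof is correct. The inductive scaffolding---peeling off $\vartheta_a$ and $\varpi_a$ on the left and right, applying the inductive hypothesis on the interval $(a+1,b)$, and reducing to the vanishing of $\varpi_{b\searrow(a+1)}$ applied to the difference---is exactly what the paper does. Your intermediate identity
\[
\varpi_a(x_a^{n}f)-x_{a+1}^{n-1}\varpi_a(x_a f)=(1+\beta x_{a+1})\,x_a\,\partial_a(x_a^{n-1})\cdot f
\]
is correct (it follows from $\varpi_a(x_a^{n-1})=-\beta x_a^{n-1}+(1+\beta x_a)\partial_a(x_a^{n-1})$ together with $x_a^{n-1}-x_{a+1}^{n-1}=(x_a-x_{a+1})\partial_a(x_a^{n-1})$), and the subsequent iteration goes through as you describe: at each step $s_k$ fixes the accumulated product $\prod_{m=a+1}^{k+1}(1+\beta x_m)$, fixes $x_a$, and fixes $f$ by hypothesis, so \eqref{iwc-eq} lets you pull these past $\partial_k$. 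The degree argument at the end is clean.

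The difference from the paper is only in how the annihilation is established. The paper factors out $x_a f$ from the obstruction and is left with $\beta\,\varpi_{b\searrow(a+1)}(x_{a+1}^{\,n-1})+\varpi_{b\searrow a}(x_a^{\,n-1})$, which it evaluates to $\beta+(-\beta)=0$ using Lemma~\ref{eee-lem} (the explicit values $\varpi_{b\searrow a}(x_a^e)=(-\beta)^{b-a-e}$). Your route bypasses Lemma~\ref{eee-lem} entirely, trading that separate induction for the iterative symmetric-factor extraction and a final degree count. Both are short; yours is a bit more self-contained, while the paper's is more modular since Lemma~\ref{eee-lem} records a fact of independent use.
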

 
 \begin{proof}
 Assume $a<b$. It holds by induction 
 that
 \[
\vartheta_{b\searrow a} (f) = 
\vartheta_{b\searrow(a+1)} (\vartheta_a f) = 
\varpi_{b\searrow(a+1)} \(x_{a+1}^{b-a-1}\vartheta_a f\)
.\]
Since
$
\varpi_a(x_a^{b-a} f) = x_{a+1}^{b-a-1}\(\vartheta_a f + \beta x_a f\) + x_af\cdot \varpi_a(x_a^{b-a-1}) 
$,
we have
\[
\vartheta_{b\searrow a}(f) = \varpi_{b\searrow a}(x_a^{b-a}f)
-
x_a f \(\beta \cdot \varpi_{b\searrow (a+1)}( x_{a+1}^{b-a-1}) 
 +
\varpi_{b\searrow a}(x_a^{b-a-1})\).
\]
From here, it suffices to show that 
$
\beta \cdot \varpi_{b\searrow (a+1)}( x_{a+1}^{b-a-1}) 
 +
\varpi_{b\searrow a}(x_a^{b-a-1}) = 0
$
and this is immediate from Lemma~\ref{eee-lem}.
 \end{proof}

\begin{proof}[Proof of Proposition~\ref{searrow-prop}]
Setting $\beta=0$ recovers \cite[Lemma 4.18]{HMP5}; the proof for generic $\beta$ 
is similar.
Let   $\Psi_{n,r}(x) = \prod_{i=1}^r \prod_{j=i+1}^n \frac{x_i\oplus x_j}{x_i}$.
Then $x^{\shSp(z)}\Psi_{n,r}(x)$ is symmetric in  $x_{r+1},x_{r+2},\dots,x_n$.
For any $j\in[r]$, the expression
\[\theta_j := \vartheta_{\phi_j\searrow j}\vartheta_{\phi_{j+1}\searrow (j+1)}\cdots \vartheta_{\phi_r\searrow r}\(  x^{\shSp(z)}
\Psi_{n,r}(x)\)\]
is symmetric in  $x_{j},x_{j+1},\dots,x_{\phi_j}$
since if $i \in \{j,j+1,\dots,\phi_j-1\}$ 
then  either $i = \phi_j-1$ and 
$\vartheta_i \theta_j = \theta_j$ 
or
$i < \phi_j-1$ and 
\[
\vartheta_i \theta_j =\vartheta_i \vartheta_{\phi_j\searrow j}\theta_{j+1} =
\vartheta_{\phi_j\searrow j} \vartheta_{i+1} \theta_{j+1} = 
\vartheta_{\phi_j\searrow j} \theta_{j+1} = \theta_j
\]
by the braid relations for $\vartheta_i$ and induction.
Using Theorem~\ref{dom-thm},
we can rewrite  
\[
\ba
x^{\shSp(z)} \Psi_{n,r}(x) &=
 x_1^{1-\phi_1} x_2^{2-\phi_2}\cdots x_r^{r-\phi_r} \prod_{i=1}^r \prod_{j=i+1}^n x_i \oplus x_j 
\\&=x_1^{1-\phi_1} x_2^{2-\phi_2}\cdots x_r^{r-\phi_r} \Gfpf_{w}
\ea
\]
where $w \in \ISp_\infty$ is the $\Sp$-dominant involution satisfying $\dearc(w)=(1,n+1)(2,n+2)\cdots(r,n+r)$.
Hence by
Lemma~\ref{yyy-lem} we have
\[
 \vartheta_{\phi_1\searrow 1}\vartheta_{\phi_2\searrow 2}\cdots \vartheta_{\phi_r\searrow r}\(  x^{\shSp(z)} \Psi_{n,r}(x) \)
 =
  \varpi_{\phi_1\searrow 1}\varpi_{\phi_2\searrow 2}\cdots \varpi_{\phi_r\searrow r}\( \Gfpf_w \).
 \]
 It is straightforward from Theorem-Definition~\ref{sp-thm1} to show that this
 is  $\Gfpf_z$.
\end{proof}

We can now prove the obvious identity suggested by the notation ``$\GSp_z$'':
 
 \begin{theorem}\label{f-grass-thm}
If $z \in \ISp_\infty$ is FPF-Grassmannian then $\GSp_z = \GP_{\shSp(z)}$.
\end{theorem}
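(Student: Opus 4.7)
The plan is to deduce Theorem~\ref{f-grass-thm} from Propositions~\ref{gp-prop} and~\ref{searrow-prop} by exploiting the invariance $\GSp_z = \GSp_{(21)^m\times z}$ (immediate from the definition~\eqref{gsp-eq}) together with a 0-Hecke absorption argument. The key observation is that, rather than comparing $\GSp_z$ and $\GP_\lambda$ directly at the level of $z$, one can work with the shifted involution $(21)^m\times z$ and let $m\to\infty$.

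First I note that $(21)^m\times z$ is again FPF-Grassmannian: its arc diagram is obtained from that of $z$ by shifting every arc up by $2m$ and prepending the consecutive arcs $(1,2),(3,4),\dots,(2m-1,2m)$, all of which are erased by $\dearc$. Consequently
\[
\dearc((21)^m\times z) = (\phi_1+2m,(n+2m)+1)\cdots(\phi_r+2m,(n+2m)+r),
\]
so $(21)^m\times z$ still has shape $\lambda = \shSp(z)$ and now has last visible descent $N := n+2m$. Applying Proposition~\ref{searrow-prop} to $(21)^m\times z$ gives
\[
\Gfpf_{(21)^m\times z} = \vartheta_{(\phi_1+2m)\searrow 1}\cdots\vartheta_{(\phi_r+2m)\searrow r}(g_N),
\]
where I abbreviate $g_M := x^\lambda\prod_{i=1}^r\prod_{j=i+1}^M\frac{x_i\oplus x_j}{x_i}$ for $M\geq r$.

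Next comes the 0-Hecke absorption step. The composition $\vartheta_{(\phi_1+2m)\searrow 1}\cdots\vartheta_{(\phi_r+2m)\searrow r}$ is a product of operators $\vartheta_i$ with indices $i<N$, so by iteratively collapsing $\vartheta_i^2=\vartheta_i$ and applying braid relations it equals $\vartheta_u$ for some $u\in S_N$. The operators $\vartheta_w$ realize a 0-Hecke monoid action of $S_N$, and in the Demazure product one has $w_N * u = w_N$ for every $u\in S_N$ because $w_N$ is the longest element; hence $\vartheta_{w_N}\vartheta_u = \vartheta_{w_N}$, and so
\[
\vartheta_{w_N}(\Gfpf_{(21)^m\times z}) = \vartheta_{w_N}(g_N).
\]

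Finally I identify each side with a finite-variable truncation. Expanding $\Gfpf_{(21)^m\times z}$ in the basis $\{\fkG_w\}$ and applying Proposition~\ref{stab-prop} term by term, Lemma~\ref{stab-lem} gives $\vartheta_{w_N}(\Gfpf_{(21)^m\times z}) = \GSp_{(21)^m\times z}(x_1,\dots,x_N,0,\dots) = \GSp_z(x_1,\dots,x_N,0,\dots)$; rerunning the algebra in the proof of Proposition~\ref{gp-prop} at finite $N$, that is applying Lemmas~\ref{tech-lem} and~\ref{vartheta-lem} to Ikeda and Naruse's finite-variable formula for $\GP_\lambda(x_1,\dots,x_N)$ without passing to the limit, gives $\vartheta_{w_N}(g_N) = \GP_\lambda(x_1,\dots,x_N,0,\dots)$. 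Combining these identities, for every $m\geq 0$ I obtain
\[
\GSp_z(x_1,\dots,x_{n+2m},0,\dots) = \GP_\lambda(x_1,\dots,x_{n+2m},0,\dots),
\]
which forces $\GSp_z = \GP_\lambda$ since $n+2m\to\infty$. The main thing to set up carefully, and what I view as the only real obstacle, is the finite-$N$ version of Proposition~\ref{gp-prop}: the computation there needs to be stated as an equality of polynomials rather than only of formal power series in the limit, but this is a direct rewriting of the same manipulations.
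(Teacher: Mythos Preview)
Your argument is essentially the paper's: both proofs feed Proposition~\ref{searrow-prop} into the $0$-Hecke absorption $\vartheta_{w_N}\vartheta_u=\vartheta_{w_N}$ and then invoke Proposition~\ref{gp-prop} together with the stabilization machinery (Corollary~\ref{stab-cor}/Proposition~\ref{stab-prop}). The only difference is packaging: the paper applies Proposition~\ref{searrow-prop} to $z$ itself, obtaining $\vartheta_{w_N}\Gfpf_z=\vartheta_{w_N}(x^{\lambda}\Psi_{n,r})$, and then passes directly to the limit $N\to\infty$; you instead apply it to $(21)^m\times z$ so that the upper index of the product matches the index $N=n+2m$ in $\vartheta_{w_N}$, and argue at finite $N$ before letting $m\to\infty$. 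Your route makes the index-matching with Proposition~\ref{gp-prop} more transparent, at the cost of a small extra justification in Step~4: applying Proposition~\ref{stab-prop} ``term by term'' literally requires each $w$ in the Grothendieck expansion of $\Gfpf_{(21)^m\times z}$ to lie in $S_N$, which you have not checked (the support of $(21)^m\times z$ extends to $n+2m+r$, not $n+2m$). This is easily repaired---for instance by taking the limit in $N$ first via Corollary~\ref{stab-cor} as the paper does---so the overall strategy is sound and coincides with the paper's.
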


\begin{proof}
Assume $z \in \ISp_\infty$ is as in Proposition~\ref{searrow-prop}. Then 
\[ \vartheta_{w_N} \Gfpf_z=
\vartheta_{w_N} \vartheta_{\phi_1\searrow 1}\cdots \vartheta_{\phi_r\searrow r} (x^{\shSp(z)}\Psi_{n,r}(x))
=
 \vartheta_{w_N}  (x^{\shSp(z)}\Psi_{n,r}(x))
\]
so 
$\ds\GSp_z = \lim_{N\to \infty} \vartheta_{w_N} \Gfpf_z = \GP_{\shSp(z)}$
by Corollary~\ref{stab-cor} and Proposition~\ref{gp-prop}.
\end{proof}

Let $\SLambda$ denote the set of strict partitions.

\begin{corollary} \label{cor:GP-into-G}
If $\lambda \in \SLambda$ then 
$\GP_\lambda \in \NN[\beta]\spanning\left\{  G_\mu : \mu \in \sP\right\}.$
\end{corollary}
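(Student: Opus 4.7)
My plan is to use Theorem~\ref{f-grass-thm} to identify $\GP_\lambda$ with a symplectic stable Grothendieck polynomial, and then route through Theorem~\ref{buch-et-al-thm}. Explicitly, given a strict partition $\lambda$ with $r$ parts, I would first choose any integer $n > r$ and let $z \in \ISp_\infty$ be the unique FPF-Grassmannian involution with $\shSp(z) = \lambda$ and last visible descent $n$. Theorem~\ref{f-grass-thm} then gives $\GP_\lambda = \GSp_z$.

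Next, by Lemma~\ref{stab-lem}, $\GSp_z$ is the image of the polynomial $\Gfpf_z$ under the $\ZZ[\beta]$-linear stabilization map defined by $\fkG_w \mapsto G_w$. Since $\{\fkG_w\}_{w \in S_\infty}$ is a $\ZZ[\beta]$-basis for $\ZZ[\beta][x_1, x_2, \dots]$ by Corollary~\ref{basis-cor}, the polynomial $\Gfpf_z$ admits a finite expansion $\Gfpf_z = \sum_w c_w \fkG_w$ with $c_w \in \ZZ[\beta]$, and applying stabilization gives $\GP_\lambda = \GSp_z = \sum_w c_w G_w$ as a finite sum. Each $G_w$ in turn expands as a finite $\NN[\beta]$-linear combination of $G_\mu$'s by Theorem~\ref{buch-et-al-thm}. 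Collecting terms yields $\GP_\lambda = \sum_\mu a_\mu G_\mu$ with $a_\mu \in \ZZ[\beta]$.

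The main obstacle, which takes up the real work of the proof, is upgrading "$\ZZ[\beta]$" to "$\NN[\beta]$". Since the family $\{G_\mu\}$ is linearly independent in $\ZZ[\beta][[x_1,x_2,\dots]]$, the coefficients $a_\mu$ are uniquely determined, so this amounts to verifying positivity of those specific coefficients rather than of any particular intermediate expansion. The natural route is to exhibit an $\NN[\beta]$-positive expansion $\Gfpf_z = \sum_w c_w \fkG_w$ for FPF-Grassmannian $z$; combined with the positivity in Theorem~\ref{buch-et-al-thm} this would finish the proof. To produce such a positive expansion I would exploit the explicit formula in Proposition~\ref{searrow-prop}, which writes $\Gfpf_z$ as an iterated isobaric divided difference applied to $x^{\shSp(z)} \Psi_{n,r}(x)$, after first verifying that $x^{\shSp(z)} \Psi_{n,r}(x)$ itself is $\NN[\beta]$-positive in the Grothendieck basis (via its factorization through an $\Sp$-dominant Grothendieck polynomial as in the proof of Proposition~\ref{searrow-prop}) and that each isobaric operator $\vartheta_i$ preserves $\NN[\beta]$-positivity in the $\fkG$-basis. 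An alternative that bypasses this verification is to prove an explicit combinatorial expansion of $\Gfpf_z$ in $\fkG_w$'s for FPF-Grassmannian $z$ by a Hecke-insertion-type argument, so that the positivity becomes manifest from a bijection rather than from an operator calculation.
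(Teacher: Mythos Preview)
Your overall architecture matches the paper's proof: choose an FPF-Grassmannian $z$ with $\shSp(z)=\lambda$, invoke Theorem~\ref{f-grass-thm} to get $\GP_\lambda=\GSp_z$, and then combine a $G_w$-expansion of $\GSp_z$ with Theorem~\ref{buch-et-al-thm}. The divergence is exactly at the positivity step you single out. The paper does not argue this from scratch; it simply cites \cite[Corollary~4.7]{MP}, which asserts that $\GSp_z$ lies in the $\NN[\beta]$-span of the $G_w$'s for every $z\in\ISp_\infty$. With that input the corollary is one line; without it you are attempting to re-derive a nontrivial result of \cite{MP}.

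Your proposed routes to supplying this positivity do not go through as written. The first route asks you to check that $x^{\shSp(z)}\Psi_{n,r}(x)$ is $\NN[\beta]$-positive in the Grothendieck basis, but that expression is in general only a \emph{Laurent} polynomial: the proof of Proposition~\ref{searrow-prop} rewrites it as $x_1^{1-\phi_1}\cdots x_r^{r-\phi_r}\Gfpf_w$ with $\phi_i\geq i$, and already for $r=1$, $\phi_1=2$, $n=3$ one obtains $(x_1\oplus x_2)(x_1\oplus x_3)/x_1$, which contains the monomial $x_2x_3/x_1$. So there is no Grothendieck expansion to speak of before the operators $\vartheta_{\phi_i\searrow i}$ are applied; polynomiality only emerges after the divided differences. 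Even setting this aside, the assertion that each $\vartheta_i$ preserves $\NN[\beta]$-positivity in the $\fkG$-basis is itself an unproved statement of roughly Monk-formula strength, since $\vartheta_i\fkG_w=\varpi_i(x_i\fkG_w)$ and Lenart's expansion of $x_i\fkG_w$ (Theorem~\ref{lenart-thm}) is signed. Your second route (a bespoke Hecke-insertion bijection) is a plausible program but not a proof. The clean fix is simply to invoke \cite[Corollary~4.7]{MP} for the positivity, exactly as the paper does.
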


\begin{proof}
If $\lambda \in \SLambda$ then there is an FPF-Grassmannian
 $z \in \ISp_\infty$ with $\shSp(z)=\lambda$,
 and \cite[Corollary 4.7]{MP} shows that $\GSp_z \in \NN[\beta]\spanning\{ G_w : w \in S_\infty\}$.
 The corollary therefore follows from
Theorems~\ref{buch-et-al-thm} and \ref{f-grass-thm}.
\end{proof}

\begin{remark}
As noted in the introduction, one can also derive this corollary from \cite{HKPWZZ,PylPat}.
One needs to compare
\cite[Theorems 1.4 and 2.2 and Proposition 3.5]{HKPWZZ} with
\cite[Lemma 3.2 and Theorems 3.16, 6.11, and 6.24]{PylPat}.
\end{remark}

There is a ``stable'' version of the transition equation for $\Gfpf_z$. 
Let $S_\ZZ$ denote the group of permutations of $\ZZ$ with finite support.
Write $\Theta_\ZZ$ for the permutation of $\ZZ$ with
$i \mapsto i -(-1)^i$ and let \[\ISp_\ZZ = \{ w\cdot \Theta_\ZZ \cdot w^{-1} : w \in S_\ZZ\}.\]
Define $\ellfpf(z)$ for $z \in \ISp_\ZZ$ by modifying the formula \eqref{ellfpf-eq}
to count pairs $(i,j) \in \ZZ\times \ZZ$;
then $\ellfpf(\Theta_\ZZ) = 0$ and
\eqref{ellfpf-eq2} still holds.
We again write $y \lessF z$ for $y,z \in \ISp_\ZZ$ if $\ellfpf(z) = \ellfpf(y)+1$
and $z = tyt$ for a transposition $t \in S_\ZZ$. 

Identify $\ISp_\infty$ with the subset of  $z \in \ISp_\ZZ$
with $z(i) = \Theta_\ZZ(i)$ for all $i \leq 0$. 
Let $\sigma : \ZZ \to \ZZ$ be the map $ i \mapsto i + 2$.
Conjugation by $\sigma$ preserves $ \ISp_\ZZ$,
and every $z \in \ISp_\ZZ$
has $\sigma^n  z \sigma^{-n} \in \ISp_\infty$ for all sufficiently large $n \in \NN$.
We  define  
\[ \GSp_z := \lim_{n\to \infty} \GSp_{\sigma^n  z \sigma^{-n}}
\qquad\text{for }z \in \ISp_\ZZ.\]
Also let $\GSp_z \fku_{ij} := \GSp_{(i,j)z(i,j)}$ for $i<j$ and extend by linearity.
In this context, $\fku_{ij}$ is a formal symbolic operator, not a well-defined linear map.

\begin{corollary}\label{stab-version-cor}
Fix $v \in \ISp_\ZZ$ and $j,k \in \ZZ$ with $v(k) = j<k = v(j)$.
Suppose  
\[  i_1  < i_2<\dots < i_p <j< k
 < l_q  < \dots < l_2 < l_1\]
are the integers such that $v \lessF (i,j)v(i,j)$ and $v\lessF (k,l)v(k,l)$.
Then
\[
\GSp_v\cdot  (1+\beta\u_{i_1j})\cdots (1+\beta\u_{i_pj})
=
\GSp_v\cdot (1+\beta\u_{kl_1})\cdots (1+\beta\u_{kl_q}).
\]
\end{corollary}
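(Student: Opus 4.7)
The approach is to shift $v \in \ISp_\ZZ$ into $\ISp_\infty$ using the conjugation action of $\sigma$, apply Theorem~\ref{sp-lenart-thm} there, and take a stable limit. Concretely, set $v_N := \sigma^N v \sigma^{-N}$, which lies in $\ISp_\infty$ for all $N \in \NN$ sufficiently large. A routine check confirms that both the length statistic $\ellfpf$ and the relation $\lessF$ are $\sigma$-equivariant: the pairs counted in \eqref{ellfpf-eq} are merely relabelled by the shift $i \mapsto i+2$. Consequently, the integers $\{i_a + 2N : 1 \le a \le p\}$ and $\{l_b + 2N : 1 \le b \le q\}$ are precisely the elements of $\Asc^-(v_N, j+2N, k+2N)$ and $\Asc^+(v_N, j+2N, k+2N)$.

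Theorem~\ref{sp-lenart-thm} applied to $v_N$ with $(j,k)$ replaced by $(j+2N, k+2N)$ then yields
\[
(1+\beta x_{j+2N})(1+\beta x_{k+2N})\, \Pi^-_N \;=\; \Pi^+_N,
\]
where $\Pi^{\pm}_N := \Pi^{\pm}(v_N,\, j+2N,\, k+2N)$. The same relabelling argument shows that the permutations $\tau^{\pm}_S$ introduced in the proof of Theorem~\ref{sp-lenart-thm} satisfy
\[
\tau^{\pm}_{S + 2N}(v_N,\, j+2N,\, k+2N) \;=\; \sigma^N\, \tau^{\pm}_S(v,j,k)\, \sigma^{-N},
\]
so expanding via \eqref{sumsum-eq} gives
\[
\Pi^{\pm}_N \;=\; \sum_S \beta^{|S|}\, \Gfpf_{\sigma^N\, \tau^{\pm}_S(v,j,k)\, \sigma^{-N}},
\]
summed over subsets $S$ of $\{i_1,\dots,i_p\}$ or $\{l_1,\dots,l_q\}$ respectively.

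I would then let $N \to \infty$. By the definition of $\GSp_z$ for $z \in \ISp_\ZZ$, each summand $\Gfpf_{\sigma^N \tau^{\pm}_S(v,j,k)\sigma^{-N}}$ converges coefficient-wise to $\GSp_{\tau^{\pm}_S(v,j,k)}$, so $\Pi^\pm_N$ converges to $\sum_S \beta^{|S|} \GSp_{\tau^{\pm}_S(v,j,k)}$. On the other hand, the factor $(1+\beta x_{j+2N})(1+\beta x_{k+2N})$ can change the coefficient of any fixed monomial $x^\alpha$ only when $\{j+2N, k+2N\} \cap \supp(\alpha) \neq \varnothing$, which fails once $N$ is large relative to $\alpha$. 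Hence this prefactor disappears in the limit, and equating the limits of the two sides of the identity above, together with the formal definition of $\u_{ij}$, delivers exactly the claim of the corollary.

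The main obstacle is pure bookkeeping: one must carefully verify the $\sigma$-equivariance of all combinatorial data ($\ellfpf$, $\lessF$, $\Asc^\pm$, and the cycle definitions of $\tau^{\pm}_S$), and justify interchanging the two limit operations (the $N \to \infty$ limit of the $\Gfpf$'s and the absorption of the $(1+\beta x_{m})$ prefactors as $m \to \infty$). No conceptually new input beyond Theorem~\ref{sp-lenart-thm} itself is required.
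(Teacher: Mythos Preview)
Your argument is correct and matches the paper's own proof: both shift $v$ into $\ISp_\infty$ via conjugation by $\sigma$, observe that the sets $\Asc^\pm$ simply translate by $2N$, apply Theorem~\ref{sp-lenart-thm}, and take the limit (in which the prefactor $(1+\beta x_{j+2N})(1+\beta x_{k+2N})$ becomes invisible). Your write-up is considerably more explicit about the equivariance and the limiting step than the paper's two-line proof, but the underlying idea is identical.
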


\begin{proof}
Define $\Asc^-(v,j,k) = \{i_1,i_2,\dots,i_p\}$ and $\Asc^+(v,j,k)= \{l_1,l_2,\dots,l_q\}$.
If $m \in \NN$ is sufficiently large 
then $\Asc^\pm(\Theta^{2m}\times v, 2m+j,2m+k) = 2m + \Asc^\pm(v,j,k)$,
so we obtain this result by taking the limit of Theorem~\ref{sp-lenart-thm}.
\end{proof}

The preceding corollary is a $K$-theoretic generalization of \cite[Theorem 3.6]{HMP5}.
The latter result has an ``orthogonal'' variant given by \cite[Theorem 3.2]{HMP4}.

\begin{corollary}\label{sp-lenart-cor2}
Let $k \in \PP$ be the last visible descent of $z \in \ISp_\ZZ$.
Define $v \in \ISp_\ZZ$ as in Corollary~\ref{sp-lenart-cor}
and
let $I=\{ i_1 < i_2 < \dots < i_p\}$ be the (possibly nonpositive) integers with $i<j :=v(k)$ and $v \lessF (i,j)v(i,j)$.
Then
\[  \GSp_z = 
 \sum_{\varnothing \neq A \subset I } \beta^{|A|-1} \GSp_v \u_{Aj}\]
 where if $A = \{a_1<a_2<\dots <a_q\} \subset I$ then $\u_{Aj} := \u_{a_1j}\u_{a_2j}\cdots \u_{a_qj}$.
\end{corollary}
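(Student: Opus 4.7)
The plan is to deduce this corollary from Corollary~\ref{stab-version-cor} applied to the specific $v$, $j$, $k$ constructed from $z$, in exact parallel with how Corollary~\ref{sp-lenart-cor} is derived from Theorem~\ref{sp-lenart-thm}.

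First I would check that the hypotheses of Corollary~\ref{stab-version-cor} are satisfied. A direct computation using $v = (k,l)z(k,l)$ gives $v(k) = z(l) = j < k$ and $v(j) = k$, and the choice of $l$ together with Proposition~\ref{lessF-prop} ensures $v \lessF z = (k,l)v(k,l)$, so in particular $l \in \Asc^+(v,j,k)$. The essential combinatorial input is the stronger claim that $\Asc^+(v,j,k) = \{l\}$, i.e., that $l$ is the \emph{unique} integer exceeding $k$ with $v \lessF (k,l')v(k,l')$. This is the $\ISp_\ZZ$-analog of \cite[Lemma 5.2]{HMP5} that was invoked in the proof of Corollary~\ref{sp-lenart-cor}; the argument uses only Proposition~\ref{lessF-prop}, the maximality of $l$, and the ``last visible descent'' property of $k$, so it transfers verbatim from $\ISp_\infty$ to $\ISp_\ZZ$. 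Establishing this is the main (if modest) obstacle.

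Granted the lemma, Corollary~\ref{stab-version-cor} yields
\[
\GSp_v \cdot \prod_{s=1}^{p} (1 + \beta \u_{i_s j})
\;=\; \GSp_v \cdot (1 + \beta \u_{k l})
\;=\; \GSp_v + \beta\, \GSp_z,
\]
where the last equality uses $(k,l)v(k,l) = z$. Expanding the product on the left as a formal sum over subsets $A \subset I$ and isolating the $A = \varnothing$ contribution gives
\[
\beta \GSp_z \;=\; \sum_{\varnothing \neq A \subset I} \beta^{|A|}\, \GSp_v \u_{A j}.
\]
Since $\ZZ[\beta][[x_1, x_2, \dots]]$ is an integral domain, one may cancel a factor of $\beta$ from both sides to obtain the claimed identity.

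An alternative route would be to apply Corollary~\ref{sp-lenart-cor} to $\sigma^n z \sigma^{-n}$ and pass to the $n \to \infty$ limit directly, as in the derivation of Corollary~\ref{stab-version-cor}; but the route through Corollary~\ref{stab-version-cor} is cleaner since the bookkeeping for how $\Asc^-$ grows under stabilization (picking up potentially nonpositive indices) has already been absorbed.
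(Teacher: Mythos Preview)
Your proof is correct and follows the same route as the paper: the paper's proof is simply ``The proof is the same as for Corollary~\ref{sp-lenart-cor}, now using Corollary~\ref{stab-version-cor},'' and you have spelled out exactly that argument---identifying $\Asc^+(v,j,k)=\{l\}$ via the $\ISp_\ZZ$-version of \cite[Lemma 5.2]{HMP5}, applying Corollary~\ref{stab-version-cor}, expanding the product over subsets of $I$, and cancelling $\beta$.
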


\begin{proof}
 The proof is the same as for Corollary~\ref{sp-lenart-cor},
 now using Corollary~\ref{stab-version-cor}. 
\end{proof}

This gives a positive recurrence for $\GSp_z$.
We expect that one could use this recurrence
and the inductive strategy in \cite{BilleyTransitions,HMP5,LS1985}
to prove the following
theorem.
However, a direct bijective proof is already available in \cite{Mar}:

\begin{theorem}[{\cite[Theorem 1.9]{Mar}}]
\label{mar-thm}
If $z \in \ISp_\infty$ then 
\[\GSp_z \in \NN[\beta]\spanning\left\{ \GP_\lambda : \lambda \in \SLambda\right\}.\]
\end{theorem}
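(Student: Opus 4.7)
The plan is to prove Theorem~\ref{mar-thm} by a transition-based induction modeled on the Lascoux--Sch\"utzenberger argument of \cite{LS1985} and its refinements in \cite{BilleyTransitions,HMP5}, using Corollary~\ref{sp-lenart-cor2} as the inductive step and Theorem~\ref{f-grass-thm} as the base case.

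For the base case, if $z \in \ISp_\infty$ is FPF-Grassmannian then Theorem~\ref{f-grass-thm} gives $\GSp_z = \GP_{\shSp(z)}$, which belongs trivially to $\NN[\beta]\spanning\{\GP_\lambda : \lambda \in \SLambda\}$; this includes $z=\wfpf$, for which $\GSp_{\wfpf}=1=\GP_\varnothing$.

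For the inductive step, suppose $z \in \ISp_\infty$ is not FPF-Grassmannian. Let $k$ be the last visible descent of $z$, set $v$ and $j = v(k)$ as in Corollary~\ref{sp-lenart-cor2}, and apply that corollary to obtain a finite expansion
\[
\GSp_z \;=\; \sum_{\varnothing \neq A \subset I}\beta^{|A|-1}\GSp_{w_A}
\]
with all coefficients in $\NN[\beta]$, where each $w_A \in \ISp_\ZZ$ is the conjugate of $v$ by an explicit cyclic permutation supported on indices at most $k$. Using that $\GSp_z$ is invariant under the shift action $z \mapsto \sigma^n z \sigma^{-n}$ (so the conclusion extends harmlessly from $\ISp_\infty$ to $\ISp_\ZZ$), it then suffices to exhibit a well-founded statistic $\rho$ on $\ISp_\ZZ$ such that $\rho(w_A) < \rho(z)$ for every term in the sum and such that the $\rho$-minimal elements of $\ISp_\infty$ are exactly the FPF-Grassmannian involutions.

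A natural candidate for $\rho$ is the lexicographic pair $(n(z), s(z))$, where $n(z)$ denotes the position of the last visible descent of $z$ in $\ISp_\ZZ$ and $s(z)$ is a secondary invariant drawn from the symplectic code $\csp(z)$, for instance the truncated tuple $(c_1, c_2, \dots, c_{n(z)})$ compared reverse-lexicographically. The passage $z\mapsto v$ strictly decreases $n$: the cover $v \lessF z$ combined with the maximality of $l$ forces the edge of $v$ at $l$ to end at the large endpoint $z(k)$, so $l$ and indices beyond are no longer visible descents of $v$; meanwhile the subsequent $(a,j)$-conjugations only alter the involution at indices bounded above by $k$ and insert a typically small value at position $k$, so they do not create new visible descents to the right of $k$. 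In residual cases where $n(w_A) = n(z)$, a careful choice of $s$ -- of the kind recorded in the proof of Theorem~\ref{sp-lenart-thm} -- should still force strict lexicographic decrease, yielding the inductive step.

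The main obstacle is precisely verifying this last point. Because $\ellfpf(w_A) = \ellfpf(z) + |A| - 1$ may equal or exceed $\ellfpf(z)$, one cannot use length as the primary statistic, and the detailed case analysis required to pin down the right secondary invariant and check strict descent is intricate -- it parallels (and is strictly longer than) the case-by-case argument in the proof of Theorem~\ref{sp-lenart-thm}. For this reason the authors appeal to the direct bijective proof in \cite{Mar} rather than carrying out this reduction here.
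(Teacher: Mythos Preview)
The paper does not prove this theorem: it is stated as a citation of \cite[Theorem~1.9]{Mar}, and the sentence immediately preceding it says only that a transition-based induction of the kind you outline is \emph{expected} to work, deferring instead to the bijective proof in \cite{Mar}. So there is no ``paper's own proof'' to compare against; what you have written is essentially an expanded version of the one-sentence remark the paper makes in passing, and your final paragraph lands in the same place the paper does --- an appeal to \cite{Mar}.

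Viewed as a proof attempt, your proposal has the genuine gap you yourself flag: the termination argument is not carried out. You never verify that your candidate statistic $\rho=(n(z),s(z))$ strictly decreases along every branch of the recurrence in Corollary~\ref{sp-lenart-cor2}. The assertion that ``the passage $z\mapsto v$ strictly decreases $n$'' is not justified (and is not obviously true in all cases: the conjugations $(a,j)$ move values at positions $\leq k$, but the last visible descent of $w_A$ need not drop below $k$), and the secondary invariant $s(z)$ is left entirely unspecified beyond ``a careful choice\ldots should still force strict lexicographic decrease.'' In the $\beta=0$ setting of \cite{HMP5} this step already requires a nontrivial argument with a carefully chosen statistic, and the $K$-theoretic version here has strictly more terms (subsets $A$ of all sizes, not just singletons), so the needed case analysis is at least as delicate. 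Until that analysis is actually done, what you have is a plausible proof \emph{plan}, not a proof --- which, to be fair, is exactly how the paper treats this approach as well.
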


Combining Theorems~\ref{f-grass-thm} and \ref{mar-thm}
gives this corollary:

\begin{corollary}
If $z \in \ISp_\infty$ then 
\[\GSp_z \in \NN[\beta]\spanning\left\{  \GSp_y : y \in \ISp_\infty\text{ is FPF-Grassmannian}\right\}.\]
\end{corollary}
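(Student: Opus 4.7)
The plan is to chain the two cited results in the obvious way, the only subtlety being that we must verify every strict partition actually arises as the shape of some FPF-Grassmannian involution.

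First I would apply Theorem~\ref{mar-thm} to write
\[
\GSp_z \;=\; \sum_{\lambda \in \SLambda} c_\lambda(\beta)\, \GP_\lambda
\]
as a finite $\NN[\beta]$-linear combination. Next, for each strict partition $\lambda$ with $c_\lambda(\beta) \neq 0$, I would invoke the existence statement recorded in the definition of FPF-Grassmannian involutions (immediately preceding the example $z=47816523$): given any strict partition $\lambda$ with $r$ parts and any integer $n > r$ bounding the largest part, there is a unique FPF-Grassmannian $y_\lambda \in \ISp_\infty$ with $\shSp(y_\lambda) = \lambda$ and last visible descent $n$. Fix any such $y_\lambda$ for each $\lambda$ occurring in the sum.

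Then Theorem~\ref{f-grass-thm} applied to $y_\lambda$ gives $\GSp_{y_\lambda} = \GP_{\shSp(y_\lambda)} = \GP_\lambda$. Substituting back yields
\[
\GSp_z \;=\; \sum_{\lambda} c_\lambda(\beta)\, \GSp_{y_\lambda},
\]
an $\NN[\beta]$-linear combination of $\GSp_y$'s with $y$ FPF-Grassmannian, as required.

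There is no real obstacle: the only point to check is the existence of an FPF-Grassmannian involution of each prescribed shape, which is already supplied by the structure theorem for FPF-Grassmannian elements recalled before Proposition~\ref{searrow-prop}. The argument is essentially a two-line substitution once Theorems~\ref{f-grass-thm} and \ref{mar-thm} are in hand.
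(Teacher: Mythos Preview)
Your proposal is correct and matches the paper's own argument, which simply states that the corollary follows by combining Theorems~\ref{f-grass-thm} and \ref{mar-thm}. You have spelled out exactly the two-step substitution the paper leaves implicit, including the one nontrivial point (existence of an FPF-Grassmannian involution of each strict shape), which the paper records just before the example $z=47816523$.
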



\begin{thebibliography}{99}
 
 
\bibitem{BilleyTransitions} S. Billey, Transition equations for isotropic flag manifolds, \emph{Discrete Math.} \textbf{193} (1998), 69--84.

\bibitem{Buch2002} A. S. Buch, A Littlewood-Richardson rule for the $K$-theory of Grassmannians, \emph{Acta Math.} \textbf{189} (2002), no. 1, 37--78.

\bibitem{BKSTY} A. S. Buch, A. Kresch, M. Shimozono, H. Tamvakis, and A. Yong,
Stable Grothendieck polynomials and $K$-theoretic factor sequences, 
\emph{Math. Ann.} \textbf{340} (2) (2008), 359--382.



\bibitem{FK1994}
S. Fomin and A. Kirillov, Grothendieck polynomials and the Yang-Baxter equation, Proceedings of the
Sixth Conference in Formal Power Series and Algebraic Combinatorics, DIMACS (1994), 183--190.


\bibitem{HKPWZZ} Z. Hamaker, A. Keilthy, R. Patrias, L. Webster, Y. Zhang, and S. Zhou,
Shifted Hecke insertion and the $K$-theory of OG$(n,2n+1)$,
 \emph{J. Combin. Theory Ser. A} \textbf{151} (2017), 207--240.
 
 \bibitem{HMP1} Z. Hamaker, E. Marberg, and B. Pawlowski, Involution words: counting problems and
connections to Schubert calculus for symmetric orbit closures, 
\emph{J. Combin. Theory Ser. A} \textbf{160} (2018), 217--260.


 \bibitem{HMP3} Z. Hamaker, E. Marberg, and B. Pawlowski, Transition formulas for involution Schubert polynomials, 
\emph{Selecta Math.} \textbf{24} (2018), 2991--3025.

\bibitem{HMP4} Z. Hamaker, E. Marberg, and B. Pawlowski, Schur $P$-positivity and involution Stanley symmetric functions, 
\emph{IMRN} (2017), rnx274.

\bibitem{HMP5} Z. Hamaker, E. Marberg, and B. Pawlowski, Fixed-point-free involutions and Schur $P$-positivity, 
\emph{J. Combinatorics} \textbf{11} (2020), no. 1, 65--110.


\bibitem{IkedaNaruse}
{T. Ikeda and H. Naruse},
{$K$-theoretic analogues of factorial Schur $P$- and $Q$-functions},
\emph{Adv. Math.}
\textbf{243} (2013),
{22--66}.



\bibitem{KM}
A. Knutson and E. Miller, Gr\"obner geometry of Schubert polynomials,
\emph{Annals of Math.} \textbf{161} (2005), 1245--1318.
	
\bibitem{LamShimozono} T. Lam and M. Shimozono, A Little bijection for affine Stanley symmetric
functions, \emph{Seminaire Lotharingien de Combinatoire} \textbf{54A} (2006), B54Ai.

\bibitem{Lascoux1990} 
A. Lascoux, Anneau de Grothendieck de la vari\'et\'e de drapeaux, in: The Grothendieck Festschrift, Vol. III, Progress on Mathematics, Birkha\"user, Boston, 1990, pp. 1–34.

\bibitem{Lascoux2000}
A. Lascoux, 
Transition on Grothendieck polynomials, 
A. Kirillov, N. Liskova (Eds.), Proceedings of Nagoya Workshop on Physics and Combinatorics (2000), 
World Scientific, Singapore, 2001, pp. 164--179.


\bibitem{LS1983} A. Lascoux and M.-P. Sch\"utzenberger, Symmetry and flag manifolds, in: Invariant Theory,
\emph{Lect. Notes in Math.} \textbf{996} (1983), 118--144.

\bibitem{LS1985} A. Lascoux and M.-P. Sch\"utzenberger, Schubert polynomials and the
Littlewood-Richardson rule, \emph{Lett. Math. Phys.} \textbf{10}  (1985), no. 2, 111--124.

\bibitem{Lenart} 
C. Lenart,
A $K$-theory version of Monk's formula and some related multiplication formulas,
\emph{J. Pure Appl. Alg.} \textbf{179} (2003), 137--158.


\bibitem{Manivel} L. Manivel, \emph{Symmetric Functions, Schubert Polynomials, and Degeneracy Loci},
American Mathematical Society, 2001.


\bibitem{Mar}
E. Marberg,
A symplectic refinement of shifted Hecke insertion,
\emph{J. Combin. Theory Ser. A}
\textbf{173} (2020), 105216.

\bibitem{MP}
E. Marberg and B. Pawlowski,
$K$-theory formulas for orthogonal and symplectic orbit closures,
preprint (2019), {\tt arXiv:1906.00907}

\bibitem{MarZha}
E. Marberg and Y. Zhang,
Affine transitions for involution Stanley symmetric functions, 
preprint (2018), {\tt arXiv:1812.04880}.



 \bibitem{PylPat} R. Patrias and P. Pylyavskyy,
 Combinatorics of $K$-theory via a $K$-theoretic Poirier-Reutenauer bialgebra,
 \emph{Discrete Math.} \textbf{339} (3) (2016), 1095--1115.
 

	
\bibitem{WyserYong} B. J. Wyser and A. Yong, Polynomials for symmetric orbit closures in the flag variety,  
\emph{Transform. Groups} \textbf{22} (2017), 267--290.

\end{thebibliography}
\end{document}